\newcommand{\EE}[1]{\mathbb{E}\left[ #1 \right]}
\newcommand{\bE}{\mathbb{E}}
\newcommand{\bT}{\mathbb{T}}
\newcommand{\PP}[1]{\mathbb{P}\left[ #1 \right]}
\newcommand{\bP}{\mathbb{P}}
\newcommand{\bC}{\mathbb{C}}
\newcommand{\bZ}{\mathbb{Z}}
\newcommand{\cK}{\mathcal{K}}
\newcommand{\re}{\mathrm{e}}
\newcommand{\ri}{\mathrm{i}}
\newcommand{\bR}{{\mathbb R}}
\DeclareMathOperator{\Tr}{Tr}
\DeclareMathOperator{\supp}{supp}
\DeclareMathOperator{\dist}{dist}
\renewcommand{\Re}{\mathop{\mathrm{Re}}}
\renewcommand{\Im}{\mathop{\mathrm{Im}}}
\newcommand{\dd}{\mathrm{d}}
\newcommand{\floor}[1] {\lfloor {#1} \rfloor}
\newcommand{\norm}[1]{\lVert #1 \rVert}
\newcommand{\abs}[1]{\vert #1 \vert}
\newcommand{\zn}{\mathbb{Z}/N\mathbb{Z}}
\newcommand{\inv}[1]{\frac{1}{#1}}
\newcommand{\cnn}{\bC^{N\times N}}
\theoremstyle{plain} 
\newtheorem{theorem}{Theorem}[section]
\newtheorem*{theorem*}{Theorem}
\newtheorem{lemma}[theorem]{Lemma}
\newtheorem*{lemma*}{Lemma}
\newtheorem{corollary}[theorem]{Corollary}
\newtheorem*{corollary*}{Corollary}
\newtheorem{proposition}[theorem]{Proposition}
\newtheorem*{proposition*}{Proposition}
\newtheorem*{assumption*}{Assumption}
\newtheorem*{definition*}{Definition}
\newtheorem*{example*}{Example}
\newtheorem{remark}[theorem]{Remark}
\newtheorem*{remark*}{Remark}
\newtheorem*{remarks*}{Remarks}
\def\author#1{\par
    {\centering{\authorfont#1}\par\vspace*{0.05in}}
}
\def\titlefont{\fontsize{13}{15}\bfseries\boldmath\selectfont\centering{}}
\def\authorfont{\fontsize{13}{15}}
\let\affiliationfont\rhfont
\def\address#1{\par
    {\centering{\affiliationfont#1\par}}\par\vspace*{11pt}
}
\def\body{
\setcounter{footnote}{0}
\def\thefootnote{\alph{footnote}}
\def\@makefnmark{{$^{\rm \@thefnmark}$}}
}
\def\title#1{
    \thispagestyle{plain}
    \vspace*{-14pt}
    \vskip 79pt
    {\centering{\titlefont #1\par}}%
    \vskip 1em
}
\begin{document}

\title{The Edge Universality of Correlated Matrices}%
\vspace{1.2cm}
\noindent \begin{minipage}{0.5\textwidth}
\author{Arka Adhikari}%
\address{Harvard University\\
adhikari@math.harvard.edu}%
 \end{minipage}
\noindent \begin{minipage}{0.5\textwidth}
\author{Ziliang Che }
\address{Harvard University\\
zche@math.harvard.edu}%
 \end{minipage}
 ~\vspace{0.3cm}
\begin{abstract}
We consider a Gaussian random matrix with correlated entries that have a power law decay of order $d>2$ and prove universality for the extreme eigenvalues. A local law is proved using the self-consistent equation combined with a decomposition of the matrix.  This local law along with concentration of eigenvalues around the edge allows us to get an bound for extreme eigenvalues. Using a recent result of the Dyson-Brownian motion, we prove universality of extreme eigenvalues. 
\end{abstract}
{\let\thefootnote\relax\footnote{Z.C. is partially supported by NSF grant DMS-1607871.}}

\begingroup
\hypersetup{linkcolor=black}
 \tableofcontents
\endgroup

\date{\today}

\newpage
\section{Introduction}

The Wigner-Dyson-Mehta conjecture asserts that the local eigenvalue statistics of large random matrices are universal in the sense that they depend only on the symmetry class of the model - real symmetric or complex Hermitian - but are otherwise independent of the underlying details of the model. There are two types of universality results. Bulk universality involves the spacing distribution eigenvalues that lie well within the support of the limiting spectral distribution, while edge universality involves the extreme eigenvalues.

There has recently been a lot of progress made in proving the Wigner-Dyson-Mehta conjecture in a increasingly large class of models.  In \cite{Erdos2012a,Erdos2013a,Erdos2009b,Erdos2012c,Erdos2012b,Erdos2010universality}, universality was proved for Wigner matrices whose entries are independent and have identical variance; parallel results are obtained independently in various cases in \cite{Tao2010,Tao2011a}. In \cite{Ajanki2015,ziliang}, this type of result was extended to more general variance patterns, while still maintaining the independence of matrix entries. 

Most of the previous works rely heavily on the independence between matrix entries, and  deal with bulk universality. Only recently have people proved results on models with general correlation structure. In \cite{Che2016, Erdos2015, Ajanki2016}, bulk universality is proved for matrices where the correlation decays fast enough. In a recent paper \cite{Erdos2017d}, Erd{\"o}s et al. consider a model where the correlation between matrix entries has a power law decay of order $d\geq 12$ in the long range and $d\geq 2$ in the short range. They use a combinatorial expansion to get optimal local law, then prove bulk universality. They remark in Example 2.12 that in the Gaussian case, $d\geq 2$ for both long range and short range correlation is sufficient to satisfy the assumptions of their main theorem.

In this paper, we prove edge universality for Gaussian matrices with a correlation structure that decays as a power law of order $d>2$, namely $|\EE{h_{ij}h_{kl}}|\leq\frac{1}{|i-l|^{d} + |j-k|^{d}}$ where $h_{ij}$ are the entries of the random matrix $H$. Our proof avoids the expansion of Greens function, but relies on a decomposition of Gaussian random variables into a sum of short range interactions. 

Recent proofs of universality have followed a robust three step strategy:
\begin{enumerate}
    \item Prove a local law for the empirical eigenvalue distribution at small scales.
    \item  Study the convergence of the DBM (Dyson-Brownian motion) in short time scales to local equilibrium.
    \item  Prove that the eigenvalue spacing distribution does not change too much during the short time evolution of DBM.
\end{enumerate}
Step 1, finding the local law, is generally the most difficult and model dependent.
The strategy in proving this local law is deriving a self-consistent equation for the Green's function $G=(H-z)^{-1}$.

One can heuristically derive a self consistent equation by taking expectation and performing integration by parts on $G(H-z)=I$. One notices that there is a linear operator $S$ such that $\EE{G(-S(G)-z)} =1$. Removing the expectation creates some error term. The goal is to show that a small error exists with high probability on our matrix ensemble, as is done in \cite{Erdos2015,Che2016}.

From \cite{Che2016}, it is known that the self-consistent equation for correlated matrix entries is of the form $G(- S(G)-z ) = I$ that can be transformed into the following vector equation via local Fourier transform.
\begin{gather}
    g(x)(-\Psi(g)(x)-z)=1,\quad x\in L^\infty([0,1]^2)
\end{gather}
where $\Psi:L^\infty([0,1]^2)\to L^\infty([0,1]^2)$ is an integral operator, which is the continuous version of $S$.
There are two difficulties in our case: getting a small error for our self-consistent equation and proving the stability of the equation near the edge.

In order to get a small error for the self-consistent equation, we avoid the procedure of removing blocks of elements, which requires combinatorial expansion, but instead applied integration by parts and concentration results along a careful decomposition of the probability space. This gives us a weak local law which can be bootstrapped to give an even better bound for the expected value of the Green's function. Once we have bounds on the expected value, we use the concentration of eigenvalues about its mean value in order to show a version of upper bound for the top eigenvalue along the edge.

In order to prove the stability, we first embed the matrix space into the continuous space $C^\infty([0,1]^2)$, up to small errors. However, entry-wise error is not small enough to allow this embedding. We noticed the fact that the operator $S$ has a smoothing effect and will reduce the error; thus, a double iteration of the operator $F(G)=(-S(G)-z)^{-1}$ created a matrix $F(F(G))$ that satisfies
\begin{equation}
    F(F(G)) = F(F(F(G))) + R,
\end{equation}
where $R$ has sufficiently fast decay on off-diagonal entries. A similar strategy based on the smoothing effect of $F$ is also used in \cite{Ajanki2016}.  Then we can embed and apply stability of the continuous solution. In order to prove the decay properties of the double iteration, we applied a preturbation around a fixed matrix that is known to have decay of matrix entries. With sufficiently strong upper bounds on the top eigenvalue and lower bounds on the bottom eigenvalue, we are able to use the result of \cite{Yau2017} to get universality for the extreme eigenvalues. The result of \cite{Erdos2017d} is sufficient to locate the extremal eigenvalues but we have an approach that allows us to get optimal correlation decay without  a combinatorial expansion.

The structure of this paper is as follows. The second section is devoted to proving a self-consistent equation with sufficiently small error. The third section of this paper involves proving stability of the self-consistent equation to get a local law to prove an upper bound on eigenvalues. The final section uses this upper bound in order to prove universality.

\textbf{Acknowledgements}: We thank J. Huang and H-T Yau for useful discussions. The argument of Sec 3.3. came from a private communication with the two.


\section{Derivation of self-consistent equation}

\subsection{The Model and Assumptions}

For $N \in\mathbb{N}$, we consider a symmetric matrix $H = (h_{ij}^{(N)})_{1\leq i,j\leq N}$ whose entries are centered Gaussian random variables. For simplicity of notation we omit the dependence of $h_{ij}$ on $N$.  Let $\xi_{ijkl}:= N\EE{ h_{ij}h_{kl}}$. Assume there is a Lipschitz function $\phi: \bT \times \bZ \to \bR$ such that 
\begin{equation}\label{def:xi}
    \xi_{ijkl} = \phi(i/N, j/N, k-i,l-j)+O(N^{-1}),\forall i\leq k, j\leq l.
\end{equation}
Let $\bZ_N = \bZ/N\bZ$, and from now on we view the indices $i,j,k,l$ as elements in $\zn$. On  $\zn$ we define the natural distance $\dist_{\zn} (i,j):= \min\{\abs{i-j+kN}|k\in\bZ\}$, which for simplicity of notation we still denote by $\abs{i-j}$ unless there is danger of confusion. Assume that there are universal constants  $d > 2$ and $\mathsf{c}_1>0$ such that 
\begin{equation}\label{def:xidecay}
    \abs{\xi_{ijkl}} \leq \mathsf{c}_1^2 \max\left\{ \inv{(\abs{i-k}+\abs{j-l}+1)^d}, \inv{(\abs{i-l}+\abs{j-k} + 1)^d}\right\}, \quad \forall i,j,k,l \in \zn.
\end{equation}
In this paper we fix an arbitrary  $\alpha\in(2,d)$ and consider it as a universal constant.  Assume that there is a constant $\mathsf{c}_2>0$, such that $H$ allows a decomposition 
\begin{equation}\label{def:c2}
    H=\mathsf{c}_2 X + Y,
\end{equation}
where $X$ is a GOE matrix independent from $Y$.

We say that a constant is universal if it only depends on $\mathsf{c}_1,\mathsf{c}_2,d$ and $\phi$. In this paper we denote $a\lesssim b$ if there is a universal constant $c>0$ such that $a\leq cb$. We also denote $a\sim1$ if $a\lesssim 1$ and $1\lesssim a$.

For $\beta>0$ and any matrix $A$ (finite square or infinite) we define the following norms,
\begin{equation}
    \norm{A}_\beta : = \sup_{i,j} \left(\abs{A_{ij}}(1+\abs{i-j})^\beta\right),\quad \abs{A}_\infty : = \max_{i,j} \abs{A_{ij}}.
\end{equation}

Let $\lambda_1\leq \cdots \leq \lambda_N$ be the eigenvalues of $H$. Let $\hat{\lambda}_1\leq \cdots\leq \gamma_N$ be the eigenvalues of an $N$ by $N$ GOE matrix (i.e. a matrix $A=(Z_{ij}+Z_{ji})_{1\leq i,j\leq N}$ where $(Z_{ij})$ are i.i.d. copies of an $N(0,1/N)$ random variable). The main result we will prove is the following.
\begin{theorem}

There exists a universal constant $\gamma$  such that for any $f\in C^1( \mathbb{R}^{k-1})$, the following inequality holds for $N$ large enough.
\begin{gather}
 |\mathbb{E}_{H}[f(\gamma N^{2/3}(\lambda_{2} -\lambda_{1}), ...\gamma N^{2/3}(\lambda_{k} - \lambda_1))] - 
\mathbb{E}_{\text{GOE}}[f(N^{2/3}(\hat{\lambda}_{2}-\hat{\lambda}_1),...N^{2/3}(\hat{\lambda}_{k} -\hat{\lambda}_1))] \le N^{-c}
\end{gather}
\end{theorem}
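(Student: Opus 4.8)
The plan is to exploit the Gaussian‑divisible structure of $H$ so that no Green's‑function comparison (no four‑moment argument) is needed: the theorem will follow by running Dyson Brownian motion for a short time starting from a matrix whose local law is already supplied by Sections~2–3. Since $H=\mathsf{c}_2 X+Y$ with $X$ a GOE matrix independent of $Y$, and a sum of two independent GOE matrices of variances $\sigma_1^2,\sigma_2^2$ is a GOE matrix of variance $\sigma_1^2+\sigma_2^2$, for every $0\le s\le \mathsf{c}_2^2/2$ we have the distributional identity
\begin{equation*}
  H \;\overset{d}{=}\; A_s + \sqrt{s}\,\widehat X,\qquad A_s := Y + \sqrt{\mathsf{c}_2^2-s}\,X',
\end{equation*}
with $X',\widehat X$ independent GOE matrices, both independent of $Y$. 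Then $A_s$ is again a centered Gaussian matrix: a short computation gives $N\EE{(A_s)_{ij}(A_s)_{kl}}=\xi_{ijkl}-s(\delta_{ik}\delta_{jl}+\delta_{il}\delta_{jk})$, so \eqref{def:xi} holds for $A_s$ with $\phi$ replaced by a bounded perturbation of $\phi$ supported on $\{k=i,\,l=j\}$, \eqref{def:xidecay} holds with $\mathsf{c}_1$ replaced by $\sqrt{\mathsf{c}_1^2+1}$, and \eqref{def:c2} holds with constant $\sqrt{\mathsf{c}_2^2-s}\ge \mathsf{c}_2/\sqrt2$. Hence every conclusion of Sections~2–3 applies to $A_s$ with constants uniform over $0\le s\le\mathsf{c}_2^2/2$: a local law for $(A_s-z)^{-1}$ down to the optimal scale near the spectral edges, edge rigidity, and a regular square‑root edge for the limiting density $\rho_{A_s}$ with a quantitative lower bound $\rho_{A_s}(E)\gtrsim\sqrt{\kappa}$ for $E$ at distance $\kappa$ inside the edge (this edge regularity is exactly what the retained Gaussian component buys).

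First I would fix $s:=N^{-1/3+\delta}$ for a small $\delta>0$ and regard $H$, in law, as the time‑$s$ configuration of the Dyson Brownian motion $t\mapsto A_s+\sqrt t\,\widehat X$ started from $A_s$. Its limiting spectral law is the free convolution $\rho_{A_s}\boxplus\mu_{\mathrm{sc},t}$, with $\mu_{\mathrm{sc},t}$ the semicircle law of variance $t$; at $t=s$ this is exactly the limiting density $\rho_H$ of $H$, which therefore has a regular square‑root edge, and the universal constant $\gamma$ of the theorem is the constant (depending only on $\mathsf{c}_1,\mathsf{c}_2,d,\phi$) relating the edge length scale of $\rho_H$ to that of the reference GOE. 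Feeding the estimates for $A_s$ from Sections~2–3 into the Dyson Brownian motion edge universality result of \cite{Yau2017} — whose hypotheses are a local law and edge rigidity for the initial matrix together with a genuine square‑root edge — one obtains, at time $t=s=N^{-1/3+\delta}$, a comparison of the $\gamma N^{2/3}$‑rescaled bottom‑edge gaps of $A_s+\sqrt s\,\widehat X\overset{d}{=}H$ with the $N^{2/3}$‑rescaled gaps of the GOE reference matrix, with error $O(N^{-c})$; unwinding the distributional identity gives the stated inequality. (The top‑edge statistics, if also desired, follow from the same argument applied to $-H$, which preserves \eqref{def:xi}–\eqref{def:c2} because the covariance is bilinear.) Thus the argument is: (i) check $A_s$ satisfies the Assumptions of Section~2.1 with uniform constants; (ii) extract from the local law of Section~3 precisely the hypotheses required by \cite{Yau2017} for $A_s$; (iii) identify $\gamma$ as the rescaling factor produced by \cite{Yau2017} at time $t=s$, i.e. the edge‑scale ratio for $\rho_H=\rho_{A_s}\boxplus\mu_{\mathrm{sc},s}$, and check it is a single universal constant; (iv) apply \cite{Yau2017} and translate back.

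I expect step (ii) to be the main obstacle: one must make sure the local law of Section~3 is strong enough — valid close enough to the edge, with a good enough error rate, and accompanied by a genuine (non‑hard) square‑root edge with density bounded below by $c\sqrt{\kappa}$ at distance $\kappa$ inside it — to qualify as an admissible initial condition for the Dyson Brownian motion result, including the two‑sided control $|\lambda_i(A_s)-\text{(classical location)}|\le N^{-2/3+\epsilon}$ for the smallest eigenvalues. This is precisely where the stability analysis near the edge in Section~3, together with the fact that $A_s$ retains an order‑one Gaussian component, do the real work; once that input is secured, the remainder is bookkeeping with the free convolution and a citation of \cite{Yau2017}, and no moment‑matching comparison is needed because the Gaussian part of $H$ furnishes the Dyson Brownian motion regularization at no cost.
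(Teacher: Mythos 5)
Your overall strategy is the same as the paper's: exploit the assumption $H=\mathsf{c}_2X+Y$ to write $H\overset{d}{=}H'+\sqrt{t}\,\mathrm{GOE}$ with $H'$ a correlated Gaussian ensemble (covariance $\xi_{ijkl}-t\,\delta$) that still satisfies the hypotheses of Section~2.1 with an order-one retained GOE component, verify $\eta^*$-regularity of the realization $M'$ from the local law and the edge bound of Section~3, condition on $M'$, and cite the Landon--Yau DBM edge universality theorem. Two points, however, are not mere bookkeeping and constitute gaps in your write-up.

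First, the scaling constant. The theorem of \cite{Yau2017} outputs a factor $\gamma_0$ determined by the square-root edge of $\rho_{V^t}=\rho_{M'}\boxplus\mu_{\mathrm{sc},t}$ for the \emph{fixed realization} $M'$ you conditioned on; it is a random quantity, not the deterministic edge constant of $\rho_H$. Your step (iii) asserts the identification of $\gamma_0$ with a universal $\gamma$ without argument. In the paper this is the content of all of Section~4.1 (Lemmas \ref{ScalingFactor} and \ref{Distance Lemma}): one compares the subordination points $z_1,z_2$ of the two free convolutions via Rouch\'e's theorem, using the local law to control $|m_{V^0}(z)-m_{(H')^0}(z)|$ and the edge bound to control $|a^t-E^t_-|$, and concludes $|\gamma_0^{1/2}-\gamma^{1/2}|\lesssim t$, after which the Lipschitz continuity of the test function absorbs the change of scale. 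Without some version of this comparison your conclusion only holds with a realization-dependent rescaling, which is strictly weaker than the stated theorem.

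Second, the time scale. You take $s=N^{-1/3+\delta}$, which forces $\eta^*\lesssim N^{-1/3}$ in the regularity hypotheses of \cite{Yau2017}. Sections~2--3 only give $\eta^*$-regularity for $\eta^*=N^{-\phi}$ with $\phi$ \emph{small}: the edge bound of Theorem \ref{thm:Edge upper bound for the top eigenvalue} places the extreme eigenvalues within $N^{-\epsilon}$ of the edge for some small unspecified $\epsilon$, and the local law domain $\mathcal{D}$ degenerates as $\omega^{-4}$ near the edge, so the square-root behavior of $\Im m_{M'}$ is only verified down to $\eta\sim N^{-\phi}$ with $\phi$ small. The paper correspondingly takes $t=N^{-\phi}$ with $\phi<\Phi$ small. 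Your choice of $s$ would require edge rigidity at scale $N^{-1/3}$ (and you even invoke optimal $N^{-2/3+\epsilon}$ rigidity), neither of which is established in, nor needed from, Sections~2--3. The fix is harmless --- take $s=N^{-\phi}$ with $\phi$ sufficiently small --- but as written the plan requests inputs the earlier sections do not supply.
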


\subsection{The Loop Equation}\label{sec:loopequation}
We will use the following lemma.
\begin{lemma}\label{lem:stein}
Let $Z=(Z_k)_{k=1}^p$ be a centered Gaussian random vector in $\bR^p$ with covariance matrix $\Sigma\in\bR^{p\times p}$. Let $f\in C^1(\bR^p)$.Then,
\[
    \EE{f(Z)Z_l} = \sum_{k=1}^p \EE{\partial_kf(Z)}\Sigma_{kl}, \forall 1\leq l\leq p.
\]
\end{lemma}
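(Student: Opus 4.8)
The plan is to prove Lemma~\ref{lem:stein} by reducing the multivariate identity to the elementary one-dimensional Gaussian integration-by-parts formula via a linear change of variables that standardizes $Z$. (An equivalent route, when $\Sigma$ is invertible, is to integrate by parts directly against the explicit Gaussian density using $\partial_k p(z) = -(\Sigma^{-1}z)_k p(z)$; the change-of-variables argument below has the mild advantage of also covering degenerate $\Sigma$.) Throughout I assume, as is implicit in the way the lemma is applied in this paper, that $f$ and its first partial derivatives have at most polynomial growth, so that every expectation in the statement is finite; the general $C^1$ case is obtained at the end by a routine cutoff.

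The first ingredient is the scalar case: if $\gamma\sim N(0,1)$ and $h\in C^1(\bR)$ with $h,h'$ of polynomial growth, then $\EE{h(\gamma)\gamma}=\EE{h'(\gamma)}$. This follows from $x\,\re^{-x^2/2}=-\tfrac{\dd}{\dd x}\re^{-x^2/2}$ and integration by parts, the boundary terms vanishing because the Gaussian weight dominates any polynomial. The second ingredient is standardization: letting $\Sigma^{1/2}$ be the symmetric positive semidefinite square root of $\Sigma$, we may write $Z=\Sigma^{1/2}W$ in distribution with $W=(W_1,\dots,W_p)$ having i.i.d.\ $N(0,1)$ coordinates. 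Then $Z_l=\sum_m(\Sigma^{1/2})_{lm}W_m$, so with $g(w):=f(\Sigma^{1/2}w)$ we have
\[
 \EE{f(Z)Z_l}=\sum_{m=1}^p(\Sigma^{1/2})_{lm}\,\EE{g(W)W_m}.
\]

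For each fixed $m$, condition on $(W_{m'})_{m'\ne m}$, which is independent of $W_m$; the inner conditional expectation is a one-dimensional Gaussian expectation in $W_m$, so the scalar case gives $\EE{g(W)W_m}=\EE{\partial_{w_m}g(W)}$, and the chain rule gives $\partial_{w_m}g(w)=\sum_k\partial_k f(\Sigma^{1/2}w)(\Sigma^{1/2})_{km}$. Substituting back and using $\sum_m(\Sigma^{1/2})_{lm}(\Sigma^{1/2})_{km}=(\Sigma^{1/2}(\Sigma^{1/2})^\top)_{lk}=\Sigma_{lk}=\Sigma_{kl}$,
\[
 \EE{f(Z)Z_l}=\sum_{k=1}^p\EE{\partial_k f(Z)}\,\Sigma_{kl},
\]
which is the assertion. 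For $f\in C^1$ without growth control one applies the identity to $f_R:=f\cdot\chi_R$ with $\chi_R$ a smooth cutoff equal to $1$ on the ball of radius $R$ and lets $R\to\infty$ by dominated convergence; this is not needed in the present paper, where the lemma is applied only to bounded resolvent entries.

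There is no substantial obstacle here — this is a classical identity. The only points requiring care are the justification of Fubini and of differentiation under the integral sign in the last step (both guaranteed by the growth hypothesis), the vanishing of the boundary terms in the scalar case, and the bookkeeping of the symmetric square root $\Sigma^{1/2}$.
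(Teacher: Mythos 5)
Your proof is correct and follows essentially the same route as the paper: reduce to the one-dimensional Stein identity by a linear change of variables that standardizes $Z$, then condition on the other coordinates. Your version is slightly more careful in that passing through $\Sigma^{1/2}$ also handles degenerate $\Sigma$ (the paper's ``assume $\Sigma=I$'' implicitly needs invertibility) and in that you flag the growth/integrability hypotheses explicitly, but these are refinements of the same argument rather than a different one.
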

\begin{proof}
By a linear change of variable, we may assume without loss of generality that $\Sigma=I$ and $l=1$. It is sufficient to show that $\bE[f(Z)Z_1]=\bE[\partial_1f(Z)]$. Let $\mathcal{F}:=\sigma(Z_2,\cdots,Z_p)$, it is sufficient to show $\bE[f(Z)Z_1\vert \mathcal{F}] = \bE[\partial_1f(Z)\vert \mathcal{F}]$. This directly follows from an identity known as Stein's lemma, which says that if $X\sim N(0,1)$ and $h\in C^1(\bR)$, then $\EE{h(X)X}=\EE{h'(X)}$.
\end{proof}
We also use the following decomposition lemma.
\begin{lemma}\label{lem:decomp}
Let $Z=(Z_k)_{k=1}^p$ be a centered Gaussian random vector. Let $1\leq q< p$. Then, there is a constant matrix $(a_{kl})_{1\leq l\leq q,q+1\leq k\leq p}$ such that
\[
    Z_k = \sum_{l=1}^q a_{kl}Z_l + \tilde{Z}_k,
\]
where $(\tilde{Z}_k)_{k=q+1}^p$ are Gaussian random variables independent from $(Z_l)_{l=1}^q$.
\end{lemma}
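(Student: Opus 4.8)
The plan is to realize each $\tilde Z_k$ as the $L^2$-orthogonal residual of $Z_k$ after projecting onto the subspace spanned by the conditioning coordinates $Z_1,\dots,Z_q$, and then to upgrade orthogonality to independence via the standard principle that an uncorrelated pair of coordinate blocks inside a jointly Gaussian vector is actually independent. So I would work inside the Hilbert space $L^2(\Omega)$ of square-integrable random variables and set $V:=\mathrm{span}\{Z_1,\dots,Z_q\}$, a finite-dimensional, hence closed, subspace. For each $k$ with $q+1\le k\le p$, let $\hat Z_k$ be the orthogonal projection of $Z_k$ onto $V$. Since $V$ is spanned by $Z_1,\dots,Z_q$, one may choose constants $a_{kl}$ ($1\le l\le q$) with $\hat Z_k=\sum_{l=1}^q a_{kl}Z_l$; these are uniquely determined when $\mathrm{Cov}(Z_1,\dots,Z_q)$ is nonsingular, and otherwise any representation of the (still unique) vector $\hat Z_k$ will do. Defining $\tilde Z_k:=Z_k-\hat Z_k$, one gets $Z_k=\sum_{l=1}^q a_{kl}Z_l+\tilde Z_k$ by construction, and $\EE{\tilde Z_k Z_l}=\langle \tilde Z_k,Z_l\rangle_{L^2}=0$ for every $1\le l\le q$, since all variables are centered so orthogonality is the same as being uncorrelated.

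The remaining step is to check that $(\tilde Z_{q+1},\dots,\tilde Z_p)$ is independent of $(Z_1,\dots,Z_q)$. The combined vector $(Z_1,\dots,Z_q,\tilde Z_{q+1},\dots,\tilde Z_p)$ is the image of $Z=(Z_1,\dots,Z_p)$ under a fixed linear map, hence is itself a centered Gaussian vector, and by the previous paragraph its covariance matrix is block-diagonal with respect to the splitting into the first $q$ and the last $p-q$ coordinates. For a Gaussian vector, vanishing cross-covariance between two coordinate blocks forces their independence: this follows at once from the factorization of the joint characteristic function $\mathbb{E}[\mathrm{e}^{\mathrm{i}\langle t,\cdot\rangle}]=\exp(-\tfrac12 t^\top\Sigma t)$ into the product of the two marginal characteristic functions when $\Sigma$ is block-diagonal. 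Therefore $(\tilde Z_{q+1},\dots,\tilde Z_p)$ and $(Z_1,\dots,Z_q)$ are independent; in particular each $\tilde Z_k$ is Gaussian and independent of $(Z_l)_{l=1}^q$, which is exactly the assertion.

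Since every step is either a Hilbert-space projection or the textbook implication "uncorrelated $+$ jointly Gaussian $\Rightarrow$ independent", there is no substantial obstacle here. The only point requiring a moment's care is the possible degeneracy of $\mathrm{Cov}(Z_1,\dots,Z_q)$, which is precisely why it is cleaner to speak of the projection $\hat Z_k$ as an element of $V$ rather than to write explicit formulas for the coefficients $a_{kl}$ through a matrix inverse; existence of \emph{some} such coefficients is all the statement demands.
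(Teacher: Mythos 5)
Your proof is correct and takes essentially the same approach as the paper: project $Z_k$ orthogonally onto the span of $Z_1,\dots,Z_q$, take the residual $\tilde Z_k$, and invoke that uncorrelated jointly Gaussian blocks are independent. The paper normalizes so that $\operatorname{Cov}(Z_1,\dots,Z_q)=I$ and writes the projection coefficients explicitly, whereas you phrase it abstractly as a Hilbert-space projection, which handles a possibly singular covariance a bit more cleanly, but the argument is the same.
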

\begin{proof}
Up to a linear transform, we may assume without loss of generality that $(Z_l)_{l=1}^q$ has covariance matrix $I_{q\times q}$. Let $\tilde{Z}_k:= Z_k - \sum_{l=1}^1 \EE{Z_kZ_l}Z_l$. It is easy to check that $(\tilde{Z}_k)_{k=q+1}^p$ are uncorrelated with $(Z_l)_{l=1}^q$. Being linear combinations of Gaussian random variables, $(\tilde{Z}_k)_{k=q+1}^p$ are still Gaussian.  Therefore $\tilde{Z}_k$ are independent from $(Z_l)_{l=1}^q$, since zero correlation is equivalent to independence for Gaussian random variables.
\end{proof}

We start with the trivial matrix identity $G(H-z) = I$, which can be written as follows
\begin{gather} \label{r:loopequation}
\sum_{k} G_{ik} h_{kj} - z G_{ij} = \delta_{ij}, \quad i,j\in\zn.
\end{gather}
Without loss of generality, fix $j=1$. According to Lemma \ref{lem:decomp} we may write, 
\begin{equation}\label{r:decomp}
	h_{ab} = \sum_{k=1}^N \gamma_{abk1} h_{k1} + \tilde h_{ab},
\end{equation}
 where $\tilde h_{ab}$ is a Gaussian random variable that is independent from $(h_{k1})_{1\leq k \leq N}$. In particular, $\gamma_{a1k1} = \delta_{ak}$, $\tilde{h}_{a1} = 0$, $\forall a\in\zn$. In order to apply Lemma \ref{lem:stein} on \eqref{r:loopequation}, let  $\mathcal{F}_{1}$ be the $\sigma$-algebra generated by $(\tilde h_{ab})_{a\neq 1,b\neq 1}$.  Define conditional expectation operator
\[
	\bE_1 [ \cdot ] : = \EE{ \cdot \vert \mathcal{F}_1}.
\]
We will then be able to apply Lemma \ref{lem:stein} to get the following
\begin{gather}\label{r:condexp1}
	 \delta_{i1} = \sum_k \bE_1[G_{ik} h_{k1}] - z \bE_1[G_{i1}] = -\sum_{k,a,b}\bE_1[ G_{ia}G_{bk}] \xi_{abk1} - z \bE_1[G_{i1}] .
\end{gather}
For technical reasons define the cut-off version of $\xi$ as follows, $\tilde{\xi}_{iklj}= \min\{\max\{\xi_{iklj},-\mathsf{c}_1^2\abs{i-j}^{-d}\},\mathsf{c}_1^2\abs{i-j}^{-d}\}$, so that $\tilde{\xi}_{iklj}$ has a power-law decay as $i$ and $j$ gets farther. Define a linear map $S:\bR^{N\times N} \to \bR^{N\times N}$ by.
\begin{equation}\label{def:S}
    	(S(M))_{pq} : = \inv{N}\sum_{\alpha,\beta} \tilde{\xi}_{p\alpha\beta q}  M_{\alpha\beta}.
\end{equation}
Therefore, \eqref{r:condexp1} is equivalent to
\begin{equation}\label{E1}
	 - \bE_1 \left[[G S(G)]_{i1}\right]- z \bE_1\left[G_{i1} \right] = \delta_{i1} + O(N^{-1}\max_{k,l}\abs{G_{kl}}).
\end{equation}
Notice that the expectation operator $\bE_1$ is equivalent to integrating over $N$ weakly dependent Gaussian random variables, we may remove the expectation up to the cost of some small error terms, after which, we would get a self-consistent equation in the following form.
\begin{equation}
    G(-S(G)-z) = I + \textrm{error}.
\end{equation}
Define a map $F:\bR^{N\times N} \to \bR^{N\times N}$ via
\begin{equation}\label{def:F}
    F(M) = (-z- S(M))^{-1}.
\end{equation}
Then the above equation can be written as the perturbation of a fixed point equation
\begin{equation}
    G=F(G) +  \textrm{error}.
\end{equation}
Here the error is entry-wise bounded by roughly $O((N\eta)^{-\inv{2}})$. However, this entry-wise bound is not strong enough to use the stability of the equation $G=F(G)$. Therefore, we iterate the map $F$ on $G$ to get 
\begin{equation}
    F(F(G)) = F(F(F(G))) + \textrm{new error}.
\end{equation}
The new error term has a power-law decay on the off-diagonal entries, hence is much smaller than the original error. This allows us to get an estimate on $F(F(G))$. Using $F(F(G))$ we can recover $G$ and get a bound on $\abs{G-G_0}$ where $G_0$ is some deterministic matrix.


\subsection{Limiting Version of self-consistent equation}
Consider $\cK:=C(\bT^2)$ and $\cK_+ :=\{ g \in\cK\vert \Im g(s,u)>0,\forall s,u\in\bT\}$. Recall the function $\phi$ in \eqref{def:xi}. Define 
\begin{equation}
    \varphi(s,t,u,v):= \sum_{k,l}\phi(s,t,k,l) \re^{-2\pi\ri(uk-vl)}.
\end{equation}
The argument in Lemma 4.15 of \cite{Che2016} can be modified to show that $\varphi\sim 1$. Also, the decay condition \eqref{def:xidecay} guarantees that $\varphi$ is Lipschitz. Define $\Psi:\cK_+\to\cK_+$ via $\Psi(h)(s,u):=\iint_{\bT^2}\varphi(s,t,u,v)h(t,v)\dd t\dd v$ and $\Phi:\cK_+\to\cK_+$ via $\Phi(h):=(-\Psi(h)-z)^{-1}$. Consider the fixed point equation $g = \Phi(g)$, or equivalently,
\begin{equation}\label{sceinfty}
    g(-\Psi(g)-z)=1.
\end{equation}
If we think of $\hat{g}$ as an infinite matrix, we may write the above equation as 
\begin{equation}\label{sceinfty1}
    \hat{g}(-S(\hat{g})-z)=I.
\end{equation}
Equations like \eqref{sceinfty} are studied in detail in \cite{Ajanki2015a}. Since the function $\varphi$ is bounded above and below away from $0$, the function $\Phi$ satisfies conditions A1-A3 and is block fully indecomposible in Definition 2.9 of \cite{Ajanki2015a}. Also, since $\varphi$ is Lipschitz, it satisfies (2.22) in that article. Therefore,  their Theorem 2.6 says that the above equation has a unique solution $g\in\cK_+$, and there is a universal constant $\mathsf{c}_3<+\infty $ such that 
\begin{equation}
    \sup_{z\in\bC^+} \norm{g}_\infty \leq \mathsf{c}_3.
\end{equation}
Let $m(z): = \iint_{\bT^2} {g}(s,u)\dd s \dd u$. Then $m$ is the Stieltjes transform of a compactly supported probability measure $\nu$ on $\bR$, i.e.,
\begin{equation}
    m(z) = \int_\bR \frac{\nu(\dd x)}{x- z},\quad\forall z\in\bC^+.
\end{equation}
Then Theorem 2.6 in \cite{Ajanki2015a} says that $\nu$ has a $\frac{1}{3}$-H{\"o}lder continuous density $\rho \in C_c(\bR)$ such that it has square-root behavior at the left and right edges, i.e., let 
\begin{equation}
    E_L:= \inf \supp \nu, \quad E_R:= \sup \supp \nu.
\end{equation}
Then, there are $c_L, c_R>0$ s.t.
\begin{equation}
    \rho(E_L + t) = c_L\sqrt{t} + O(t),\quad \rho(E_R-t) = c_R\sqrt{t}+O(t),\text{ as } t\to 0_+.
\end{equation}
For $h\in\cK$, define the Fourier coefficients $\hat{h}(s,k):= \int_\bT h(s,u) \re^{-2\pi\ri k u} \dd u$. On $\cK$ we may define a norm $\norm{\cdot}_\beta$ for $\beta\geq 0$:
\begin{equation}
    \norm{h}_\beta:= \sup_{s\in\bT,k\in\bZ} \abs{\hat{h}(s,k)}(1+\abs{k})^\beta.
\end{equation}
In view of Theorem \ref{Jaffard}, it is easy to see that $\norm{g}_\alpha\vee \norm{g^{-1}}_\alpha \lesssim 1$ on any bounded subdomain of $\bC^+$. For any $N\in\mathbb{N}$, define a discretization operator $D^{(N)}: \cK \to \cnn$ by
\begin{equation}
    D(h)_{ij}: = \hat{h}(i/N, j-i).
\end{equation}
We have the following lemma concerning the discretization $D(g)$:
\begin{lemma}\label{D}
Let $a,b\in\cK$. Assume that $b$ is Lipschitz in the first variable in the sense that $\abs{b(s,u)-b(s',u)}\leq L\abs{s-s'},\forall s,s'\in\bT,u\in\bT$. Then, $\norm{D(a)D(b)-D(ab)}\lesssim N^{-\inv{2}}(L+\norm{b}_\alpha)\norm{a}_\alpha$, also, $\norm{D(a)D(b)^*-D(a\bar{b})}\lesssim N^{-\inv{2}}(L+\norm{b}_\alpha)\norm{a}_\alpha$.
\end{lemma}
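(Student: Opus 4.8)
The plan is to compute both products entrywise, subtract, and estimate the resulting matrix by Schur's test. Since the Fourier coefficient of a pointwise product is the convolution of the Fourier coefficients, $\widehat{ab}(s,m)=\sum_n\hat a(s,n)\hat b(s,m-n)$, we have $D(ab)_{ij}=\sum_k\hat a(i/N,k-i)\hat b(i/N,j-k)$, while $(D(a)D(b))_{ij}=\sum_k\hat a(i/N,k-i)\hat b(k/N,j-k)$. Subtracting,
\[
  \big(D(a)D(b)-D(ab)\big)_{ij}=\sum_k\hat a(i/N,k-i)\big(\hat b(k/N,j-k)-\hat b(i/N,j-k)\big),
\]
so the only error is that the first argument of $\hat b$ is shifted from $i/N$ to $k/N$. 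I would control this difference in two ways: by the Lipschitz hypothesis, $\absa{\hat b(k/N,m)-\hat b(i/N,m)}\le L\abs{k-i}/N$, and by Fourier decay, $\absa{\hat b(k/N,m)-\hat b(i/N,m)}\le 2\norm{b}_\alpha(1+\abs m)^{-\alpha}$; and $\abs{\hat a(i/N,k-i)}\le\norm{a}_\alpha(1+\abs{k-i})^{-\alpha}$.

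Then I would run Schur's test, bounding $\sup_i\sum_j$ and $\sup_j\sum_i$ of the entrywise moduli and taking the geometric mean. For the row sum, fix $i$; for each $k$, the sum over $m=j-k$ of $\min\{L\abs{k-i}/N,\,2\norm{b}_\alpha(1+\abs m)^{-\alpha}\}$ is, after splitting at the crossover frequency $\abs m\sim(\norm{b}_\alpha N/(L\abs{k-i}))^{1/\alpha}$, bounded by $\lesssim(L\abs{k-i}/N)^{1-1/\alpha}\norm{b}_\alpha^{1/\alpha}$. Multiplying by $\norm{a}_\alpha(1+\abs{k-i})^{-\alpha}$ and summing over $k$ yields $\lesssim\norm{a}_\alpha\norm{b}_\alpha^{1/\alpha}(L/N)^{1-1/\alpha}$, because $\sum_k(1+\abs{k-i})^{-\alpha}\abs{k-i}^{1-1/\alpha}$ converges (here $\alpha+1/\alpha>2$, which follows from $\alpha>2$); the column sum is the same after relabelling. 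Since $\alpha>2$ forces $1-1/\alpha>1/2$, one has $(L/N)^{1-1/\alpha}\le N^{-1/2}L^{1-1/\alpha}$ and $L^{1-1/\alpha}\norm{b}_\alpha^{1/\alpha}\le L+\norm{b}_\alpha$, so Schur's test gives $\norm{D(a)D(b)-D(ab)}\lesssim N^{-1/2}(L+\norm{b}_\alpha)\norm{a}_\alpha$.

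For the adjoint statement the mismatch is no longer carried by the summation index: writing $(D(b)^*)_{kj}=\overline{\hat b(j/N,k-j)}$ and $D(\bar b)_{kj}=\overline{\hat b(k/N,k-j)}$, a short computation shows that $\big(D(a)D(b)^*-D(a\bar b)\big)_{ij}$ equals the $(j-i)$-th Fourier coefficient of $u\mapsto a(i/N,u)\big(\overline{b(j/N,u)}-\overline{b(i/N,u)}\big)$, so the relevant Lipschitz gap is $L\abs{i-j}/N$, decoupled from the decay of $\hat a$. I would handle this by writing $D(a)D(b)^*-D(a\bar b)=\big(D(a)D(\bar b)-D(a\bar b)\big)+D(a)E$ with $E:=D(b)^*-D(\bar b)$. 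The first summand is the estimate already obtained, with $b$ replaced by $\bar b$ (same Lipschitz constant, and $\norm{\bar b}_\alpha=\norm{b}_\alpha$). For the second, $\norm{D(a)}\lesssim\norm{a}_\alpha$ by Schur's test (the rows and columns of $D(a)$ are $\ell^1$ with constant $\lesssim\norm{a}_\alpha$ as $\alpha>1$), while $E_{kj}=\overline{\hat b(j/N,k-j)}-\overline{\hat b(k/N,k-j)}$ satisfies $\abs{E_{kj}}\le\min\{L\abs{j-k}/N,\,2\norm{b}_\alpha(1+\abs{j-k})^{-\alpha}\}$, so its row and column sums, hence $\norm{E}$, are a negative power of $N$ times $L+\norm{b}_\alpha$ by the same interpolation; multiplying by $\norm{D(a)}$ closes the bound.

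The step I expect to be the main obstacle is extracting an honest negative power of $N$ out of the competition between the $O(1/N)$ Lipschitz bound and the $O(1)$, polynomially decaying bound on the Fourier-coefficient difference, and in particular verifying that the resulting exponent reaches $1/2$: this is exactly where $\alpha>2$ enters (it makes the auxiliary series summable and fixes the exponent), and it is also why the adjoint case needs the detour through $E$, since there the Lipschitz gap $\abs{i-j}/N$ is no longer damped by a decay factor coming from the summation, in contrast to the gap $\abs{k-i}/N$ in $D(a)D(b)$, which sits against the $(1+\abs{k-i})^{-\alpha}$ decay of $\hat a$.
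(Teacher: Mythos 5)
Your argument for the first identity is correct and is essentially the paper's proof: the same entrywise formula $\sum_k \hat a(i/N,k-i)\bigl(\hat b(k/N,j-k)-\hat b(i/N,j-k)\bigr)$, the same two competing bounds on the $\hat b$-difference (Lipschitz in the first slot versus $\alpha$-decay in the second), and Schur's test, which is exactly the paper's interpolation between the $l^1\to l^1$ and $l^\infty\to l^\infty$ norms. The only difference is where the minimum is taken: the paper first sums over $k$ to obtain the two entrywise bounds $N^{-1}L\norm{a}_\alpha$ and $\norm{a}_\alpha\norm{b}_\alpha(1+\abs{i-j})^{-\alpha}$, and then sums $\sum_k\bigl(N^{-1}\wedge\abs{k}^{-2}\bigr)\lesssim N^{-1/2}$; you take the minimum inside the double sum and get the marginally sharper exponent $N^{-(1-1/\alpha)}$. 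Both yield the first claim.

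For the adjoint identity your diagnosis is more careful than the paper's one-line ``similar argument'': the Lipschitz gap there is $L\abs{i-j}/N$ and is not damped by the decay of $\hat a$, and your splitting through $E=D(b)^*-D(\bar b)$ is a sensible way to isolate that. But the quantitative conclusion falls short of the statement. With $\abs{E_{kj}}\leq\min\{L\abs{j-k}/N,\,2\norm{b}_\alpha(1+\abs{j-k})^{-\alpha}\}$, the row sum optimizes at the crossover $\abs{m}\sim(N\norm{b}_\alpha/L)^{1/(\alpha+1)}$ and gives $\lesssim(L+\norm{b}_\alpha)N^{-(\alpha-1)/(\alpha+1)}$, and $(\alpha-1)/(\alpha+1)\geq 1/2$ only when $\alpha\geq3$. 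Since the paper fixes $\alpha\in(2,d)$ with $d$ possibly below $3$, your ``negative power of $N$'' is genuinely weaker than the stated $N^{-1/2}$ in the range $2<\alpha<3$, and I do not see how to upgrade it from the stated hypotheses alone, since the two bounds on $E_{kj}$ can in principle be close to saturated simultaneously. To be fair, the paper's own ``similar argument'' would hit exactly the same wall, and for the application in Corollary \ref{col:DisSV} any fixed negative power of $N$ suffices after adjusting $\theta$; but as a proof of the lemma as stated, your argument establishes the adjoint bound only for $\alpha\geq3$, or with the exponent $\inv{2}$ replaced by $\min\{\inv{2},\frac{\alpha-1}{\alpha+1}\}$.
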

\begin{proof}
By definition, $(D(a)D(b)-D(ab))_{ij}= \sum_k \hat{a}(i/N,k-i)(\hat{b}(k/N,j-k)-\hat{b}(i/N,j-k))$, therefore, using the decay of $\hat{a}$ and the Liptchitz continuity of $\hat{b}$, we have 
\begin{equation}\label{D1}
    \abs{(D(a)D(b)-D(ab))_{ij}} \leq \sum_k \frac{\norm{a}_\alpha}{\abs{k-i}^\alpha}\frac{L\abs{k-i}}{N}\lesssim N^{-1}L\norm{a}_\alpha.    
\end{equation}
On the other hand, $\norm{D(a)D(b)-D(ab)}_\alpha\lesssim \norm{a}_\alpha \norm{b}_\alpha$, hence
\begin{equation}\label{D2}
        \abs{(D(a)D(b)-D(ab))_{ij}} \leq \norm{a}_\alpha\norm{b}_\alpha(1+\abs{i-j})^{-\alpha}.
\end{equation}
Therefore $\norm{D(a)D(b)-D(ab)}_{l^\infty\to l^\infty} \lesssim (L+\norm{b}_\alpha)\norm{a}_\alpha\sum_k(N^{-1}\wedge  \abs{k}^{-2} ) \lesssim N^{-\inv{2}}(L+\norm{b}_\alpha)\norm{a}_\alpha$. Similarly, the $l^1\to l^1$ norm is bounded by the same quantity, hence the operator norm has the same bound by interpolation. The second estimate follows from a similar argument.
\end{proof}

Let $Z(z):= \{ \abs{g(s,t)}\vert s,t\in\bT\}$. From equation \eqref{sceinfty} we know that $Z$ is bounded away from $0$ and $+\infty$. For $K>0$ let $\mathcal{D}_K=\{z\in\bC^+\vert \abs{z}\leq K\}$.
\begin{corollary} \label{col:DisSV}
There is an $N(K)>0$ such that for any $ N>N(K)$ and $z\in \mathcal{D}_K$, the singular spectrum of $D(g)$ is in the $N^{-\inv{3}}(\log N)^{-1}$-neighborhood of $Z(z)$.
\end{corollary}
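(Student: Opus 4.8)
The plan is to reduce the claim to an invertibility statement and then build an explicit approximate inverse out of the approximate multiplicativity of the discretization recorded in Lemma~\ref{D}. Fix $z\in\mathcal{D}_K$ and write $\epsilon:=N^{-1/3}(\log N)^{-1}$. The singular values of $D(g)$ are the square roots of the eigenvalues of the positive semidefinite matrix $D(g)D(g)^{*}$. Since $g\in\cK=C(\bT^2)$ and $\bT^2$ is connected, $Z(z)=\abs{g}(\bT^2)$ is a closed interval $[p,q]$, and by \eqref{sceinfty} together with the bounds from \cite{Ajanki2015a} one has $0<p\le q<\infty$ with $p$ and $q^{-1}$ bounded away from $0$ uniformly in $z\in\mathcal{D}_K$. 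It therefore suffices to prove $\operatorname{spec}(D(g)D(g)^{*})\subset[(p-\epsilon)^2,(q+\epsilon)^2]$, equivalently that $D(g)D(g)^{*}-\mu I$ is invertible for every $\mu\in[0,(p-\epsilon)^2)\cup\bigl((q+\epsilon)^2,\infty\bigr)$; for every such $\mu$ one checks $\delta:=\dist\bigl(\mu,[p^2,q^2]\bigr)\gtrsim\epsilon$.

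To invert $D(g)D(g)^{*}-\mu I$, set $r_\mu(s,u):=\bigl(\abs{g(s,u)}^2-\mu\bigr)^{-1}$, a well-defined element of $\cK$ since $\abs{g}^2\in[p^2,q^2]$ and $\delta>0$, with $\norm{r_\mu}_\infty=\delta^{-1}$. Apply Lemma~\ref{D} twice: with both arguments equal to $g$ it gives $D(g)D(g)^{*}=D(\abs{g}^2)+O(N^{-1/2})$ in operator norm, and with first argument $r_\mu$ and second argument $\abs{g}^2$ (using $\operatorname{Lip}_s(\abs{g}^2)\lesssim1$, $\norm{\abs{g}^2}_\alpha\lesssim1$) it gives $D(r_\mu)D(\abs{g}^2)=D(r_\mu\abs{g}^2)+O\bigl(N^{-1/2}\norm{r_\mu}_\alpha\bigr)$. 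Combining these with $\mu D(r_\mu)=D(\mu r_\mu)$, the identity $r_\mu(\abs{g}^2-\mu)\equiv1$, $D(\mathbf{1})=I$, and the elementary bound $\norm{D(r_\mu)}\lesssim\norm{r_\mu}_\alpha$ (valid as $\alpha>1$) yields
\[
D(r_\mu)\bigl(D(g)D(g)^{*}-\mu I\bigr)=I+O\bigl(N^{-1/2}\norm{r_\mu}_\alpha\bigr).
\]
Once the error has operator norm below $1$, the square matrix $D(g)D(g)^{*}-\mu I$ is injective, hence invertible, so $\mu\notin\operatorname{spec}(D(g)D(g)^{*})$.

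It remains to control $\norm{r_\mu}_\alpha$. For $\abs{\mu}\gtrsim1$ one has $\delta\gtrsim1$ and the bound is trivial, so suppose $\abs{\mu}$ bounded. Then $r_\mu$ is the pointwise reciprocal of $\abs{g}^2-\mu$, whose second-variable Fourier coefficients decay like $(1+\abs{k})^{-\alpha}$ (indeed $\norm{\abs{g}^2-\mu}_\alpha\lesssim1$) and which is bounded below by $\delta$; by the quantitative form of Theorem~\ref{Jaffard} --- the inverse-closedness already used to get $\norm{g^{-1}}_\alpha\lesssim1$ --- one obtains $\norm{r_\mu}_\alpha\lesssim\delta^{-C_0}$ for a universal exponent $C_0$. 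Hence, with $\delta\gtrsim\epsilon=N^{-1/3}(\log N)^{-1}$,
\[
N^{-1/2}\norm{r_\mu}_\alpha\lesssim N^{-1/2}\epsilon^{-C_0}=N^{(2C_0-3)/6}(\log N)^{C_0},
\]
which tends to $0$ as soon as $C_0<3/2$, uniformly over $\mu$ and over $z\in\mathcal{D}_K$; this is exactly what fixes the neighborhood radius at $N^{-1/3}(\log N)^{-1}$. Taking square roots turns $\operatorname{spec}(D(g)D(g)^{*})\subset[(p-\epsilon)^2,(q+\epsilon)^2]$ into the statement that the singular spectrum of $D(g)$ lies in $[p-\epsilon,q+\epsilon]$, which is the $\epsilon$-neighborhood of $Z(z)=[p,q]$.

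I expect the main obstacle to be this quantitative localization estimate $\norm{r_\mu}_\alpha\lesssim\delta^{-C_0}$ with an explicit, sufficiently small exponent $C_0<3/2$: one must upgrade the qualitative fact that the reciprocal of a polynomially localized nonvanishing symbol is again polynomially localized into a version in which the dependence on the lower bound $\delta$ is explicit, and it is this exponent that forces the scale $N^{-1/3}(\log N)^{-1}$ (a sharper exponent would push it toward $N^{-1/2}$). The rest is routine bookkeeping: the approximate inverse from Lemma~\ref{D} is mechanical, and uniformity over $\mathcal{D}_K$ follows from the $z$-uniform bounds on $p$, $q$, $\operatorname{Lip}_s(g)$, $\norm{g}_\alpha$ and $\norm{g^{-1}}_\alpha$. (A moment method through $\Tr\bigl[(D(g)D(g)^{*})^{k}\bigr]$ with $k$ a suitable power of $\log N$ also bounds the top of the singular spectrum, but the constants it produces are not sharp enough to locate $Z(z)$ without the localization estimate.)
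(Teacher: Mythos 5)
Your construction is the same as the paper's: the paper also inverts $D(g)D(g)^*-x^2$ by multiplying by $D(h)$ with $h=(\abs{g}^2-x^2)^{-1}$ and controls the two error terms with Lemma~\ref{D}; your $r_\mu$ is exactly this $h$ with $\mu=x^2$, and the remaining bookkeeping ($D(\mathbf{1})=I$, $\norm{D(r_\mu)}\lesssim\norm{r_\mu}_\alpha$, passing from $\mathrm{spec}(D(g)D(g)^*)$ back to singular values) is identical. So there is no difference in route.

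The step you flag as the main obstacle is, however, a genuine gap, and it cannot be closed at the stated radius. The available estimate --- and the one the paper itself uses --- is the chain-rule bound $\norm{\partial_u^2 r_\mu}_\infty\lesssim\delta^{-3}$, hence $\norm{r_\mu}_2\lesssim\delta^{-3}$, i.e.\ $C_0=3$. In fact $C_0<3/2$ is impossible even for the nicest symbols: for $\abs{g}^2-\mu=\delta+c\sin^2(\pi u)$ one computes $\abs{\hat{r}_\mu(k)}\sim\delta^{-1/2}\re^{-c'\sqrt{\delta}\abs{k}}$ and therefore $\norm{r_\mu}_2\sim\delta^{-3/2}$, so the best possible exponent is exactly the borderline value $3/2$, at which $N^{-1/2}\delta^{-3/2}$ is still $(\log N)^{3/2}$ when $\delta=N^{-1/3}(\log N)^{-1}$. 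With the honest exponent $C_0=3$ the error $N^{-1/2}\norm{r_\mu}_2$ is $o(1)$ only for $\delta\gg N^{-1/6}$, so this argument proves the corollary with radius of order $N^{-1/6}\log N$, not $N^{-1/3}(\log N)^{-1}$. You should know that the paper's own proof has the identical defect: it bounds the error by $\norm{h}_2+L\lesssim\theta^{-3}$, silently dropping the prefactor $N^{-1/2}$ supplied by Lemma~\ref{D}, and even with that prefactor restored the bound is $N^{-1/2}\theta^{-3}=N^{1/2}(\log N)^3$ at $\theta=N^{-1/3}(\log N)^{-1}$, which is not small. The saving grace is that only the qualitative conclusion is used downstream (Corollary~\ref{col:Nrmbndit} needs the singular spectrum of $D(g)$ to stay in a fixed compact subset of $\bR_+$), for which any $o(1)$ radius --- in particular $N^{-1/6}\log N$ --- suffices; so your argument is correct once the radius in the statement is weakened accordingly.
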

\begin{proof}
Let $\theta\ll 1 $ be some parameter to be chosen. Let $x\in\bR_+$ s.t. $\dist(x,Z)\geq \theta$. Let $h:= \frac{1}{\abs{g}^2 -x^2}$. Then 
\[
    \norm{(D(g)D(g)^* -x^2) D(h) -I}\leq \norm{D(g)D(g)^*-D(\abs{g}^2)}\norm{D(h)} + \norm{D(\abs{g}^2-x^2)D(h)-D(1)}.
\]
According to Lemma \ref{D}, we have $\norm{(D(g)D(g)^* -x^2) D(h) -I}\lesssim \norm{h}_2+L$, where $L$ is the Lipschitz constant of $h$ with respect to the first variable. By chain rule we know that $\norm{h''}_\infty \lesssim \theta^{-3}$ and $L\lesssim \theta^{-2}$. Therefore, $\norm{h}_2\lesssim \theta^{-3}$ and hence $\norm{(D(g)D(g)^* -x^2) D(h) -I}\lesssim \theta^{-3}$. Choose $\theta = N^{-\inv{3}}(\log N)^{-1}$. Then $D(g)D(g)^*-x^2$ is invertible for $N$ large enough. That means $x$ is not in the singular spectrum of $D(g)$.
\end{proof}

\begin{corollary}\label{col:DisSV1}
Let $R:= D(g)(-S(D(g))-z)-I$. Then, for any $z\in \mathcal{D}_K$,
\begin{equation}
    \abs{R_{ij}}\leq C(K) N^{-1} \wedge \abs{i-j}^{-2}.
\end{equation}
In particular, $\norm{R}\leq C(K) N^{-\inv{2}}$.
\end{corollary}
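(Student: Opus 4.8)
The plan is to transport the continuous fixed‑point equation \eqref{sceinfty} down to the matrix level, using Lemma \ref{D} to control the discretization of products and the local Fourier transform of \cite{Che2016} to pass between $S$ and $\Psi$. From \eqref{sceinfty} we have the pointwise identity $g\,\Psi(g)=-1-zg$, so, since $D$ is linear and $D(1)=I$, we get $D(g\Psi(g))=-I-zD(g)$ and hence
\[
    R \;=\; D(g)\big(-S(D(g))-z\big)-I \;=\; \big(D(g\Psi(g))-D(g)D(\Psi(g))\big)\;+\;D(g)\big(D(\Psi(g))-S(D(g))\big).
\]
It suffices to bound each summand entrywise by $C(K)\big(N^{-1}\wedge\abs{i-j}^{-2}\big)$: the operator‑norm bound $\norm{R}\lesssim N^{-1/2}$ then follows by summing $\sum_k(N^{-1}\wedge k^{-2})\lesssim N^{-1/2}$ along rows and columns and interpolating between $\ell^1$ and $\ell^\infty$, exactly as in the proof of Lemma \ref{D}.

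The first summand is handled directly by Lemma \ref{D} with $a=g$, $b=\Psi(g)$. Its hypotheses hold on $\mathcal{D}_K$: $\norm{g}_\alpha\lesssim1$; $\Psi(g)$ is Lipschitz in the first variable with constant $\lesssim1$ because $\varphi$ is and $\norm{g}_\infty\le\mathsf{c}_3$; and $\norm{\Psi(g)}_\alpha\lesssim1$, for instance because $\Psi(g)=-z-g^{-1}$ with $\norm{g^{-1}}_\alpha\lesssim1$, or directly from the representation $\widehat{\Psi(g)}(s,k)=\sum_l\int_\bT\phi(s,t,-k,l)\hat g(t,-l)\,\dd t$ together with the decay of $\phi$. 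Then \eqref{D1}–\eqref{D2} give exactly $\absa{(D(g\Psi(g))-D(g)D(\Psi(g)))_{ij}}\lesssim N^{-1}\wedge(1+\abs{i-j})^{-\alpha}\le N^{-1}\wedge\abs{i-j}^{-2}$, since $\alpha>2$.

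The core of the argument is the estimate $\absa{(D(\Psi(g))-S(D(g)))_{ij}}\lesssim N^{-1}\wedge\abs{i-j}^{-2}$. Writing $S(D(g))_{pq}=\frac1N\sum_{\alpha,\beta}\tilde\xi_{p\alpha\beta q}\hat g(\alpha/N,\beta-\alpha)$ and $D(\Psi(g))_{pq}=\widehat{\Psi(g)}(p/N,q-p)$, I would compare them through three successive replacements inside the discrete sum. (i) Replace $\tilde\xi_{p\alpha\beta q}$ by $\xi_{p\alpha\beta q}$: the cutoff only acts when the first branch of \eqref{def:xidecay} exceeds $\mathsf{c}_1^2\abs{p-q}^{-d}$, i.e.\ when $\abs{p-\beta}+\abs{\alpha-q}<\abs{p-q}$, and there $\absa{\tilde\xi_{p\alpha\beta q}-\xi_{p\alpha\beta q}}\le\mathsf{c}_1^2(\abs{p-\beta}+\abs{\alpha-q}+1)^{-d}$ while $\absa{\hat g(\alpha/N,\beta-\alpha)}\lesssim(1+\abs{p-q}-\abs{p-\beta}-\abs{\alpha-q})^{-\alpha}$; summing over that range, using $d>2$, $\alpha>2$ and $\abs{p-q}\le N$, yields a contribution $\lesssim N^{-1}\wedge\abs{p-q}^{-2}$. (ii) Replace $\xi_{p\alpha\beta q}$ by $\phi$ evaluated at the appropriate arguments: after reducing the index orderings via $\xi_{ijkl}=\xi_{klij}=\xi_{jilk}$, each per‑entry error is $O(N^{-1})$ and inherits the off‑diagonal decay of $\xi$ and $\phi$, so the total is again $\lesssim N^{-1}\wedge\abs{p-q}^{-2}$. (iii) Recognize the remaining sum as a Riemann sum for the integral defining $\widehat{\Psi(g)}(p/N,q-p)$; this is where the local Fourier transform correspondence of \cite{Che2016} between $S$ and $\Psi$ enters, and after reorganizing the (effectively convergent) frequency sum as a Fourier series, the Riemann‑sum error in the position variable and the error from truncating the frequency sum from $\zn$ to $\bZ$ are each $\lesssim N^{-1}$ with off‑diagonal decay $\lesssim\abs{p-q}^{-2}$, using the Lipschitz continuity of $\phi$ in its first two arguments and of $\hat g(\cdot,n)$ in the first variable, the fast decay of $\phi$ in its last two arguments, and $d>2$.

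Finally, since $\norm{D(g)}\lesssim1$ by Corollary \ref{col:DisSV} and $\absa{D(g)_{ij}}\lesssim(1+\abs{i-j})^{-\alpha}$ with $\alpha>2$, left‑multiplication by $D(g)$ preserves the bound $N^{-1}\wedge\abs{i-j}^{-2}$ (a routine convolution estimate), so $\absa{R_{ij}}\le C(K)\big(N^{-1}\wedge\abs{i-j}^{-2}\big)$ and hence $\norm{R}\le C(K)N^{-1/2}$. I expect step (iii) — matching the discrete double sum against the integral defining $\widehat{\Psi(g)}$ with the stated rate while keeping track of the index‑ordering cases and the off‑diagonal decay — to be the main technical obstacle; steps (i) and (ii) are elementary but require care.
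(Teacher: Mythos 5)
Your proposal is correct and follows essentially the same route as the paper: split $R$ via the fixed-point equation $g\Psi(g)=-1-zg$ into the discretization error $D(g\Psi(g))-D(g)D(\Psi(g))$ (controlled by Lemma \ref{D}) plus $D(g)\bigl(D(\Psi(g))-S(D(g))\bigr)$, and then bound $D(\Psi(g))-S(D(g))$ entrywise as a Riemann-sum-versus-integral comparison using the Lipschitz continuity of $\phi$ and $g$. Your write-up is more careful than the paper's (notably in handling the cutoff $\tilde\xi$ versus $\xi$ and the final convolution with $D(g)$, which the paper elides), but the underlying argument is the same.
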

\begin{proof}
According to Lemma \ref{D} and equation \eqref{sceinfty}, we know
\[
    \abs{(D(g)(-D(\Psi(g))-z)-I)_{ij}}\lesssim N^{-1} \wedge \abs{i-j}^{-2}.
\]
By definition, $(D(\Psi(g))_{kl} = \sum_p \int_\bT \phi(k/N,t,l-k,p) \hat{g}(t,p)\dd t$, $(S(D(g)))_{kl}=\inv{N}\sum_{p,q} \phi(k/N,q/N,l-k,p)\hat{g}(t,p)$. Using the Lipschitz-ness of $\phi$ and $g$, we have $\abs{(D(\Psi(g))_{kl}-(S(D(g)))_{kl})}\lesssim N^{-1} \wedge \abs{k-l}^{-2}$. Therefore,
\[
    \abs{(D(g)(-S(D(g))-z)-I)_{ij}}\lesssim N^{-1} \wedge \abs{i-j}^{-2},
\]
as desired.
\end{proof}

\begin{corollary} \label{col:Nrmbndit}
Recall the definition \eqref{def:F} of $F$. For all sufficiently large N, there exists a constant $c>0$ such that $\norm{F(D(g))-D(g)}\vee\norm{F(F(D(g)))-D(g)}\leq cN^{-\inv{2}}$. In particular, the singular spectrum of $F(D(g))$ and $F(F(D(g)))$ are contained in a compact subset of $\bR_+$.
\end{corollary}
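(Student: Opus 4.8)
The plan is a short perturbation argument built on Corollaries~\ref{col:DisSV} and~\ref{col:DisSV1}. Fix $z\in\mathcal{D}_K$ and take $N$ large depending only on $K$; all implicit constants are universal or depend only on $K$. The inputs are: by the discussion preceding Corollary~\ref{col:DisSV}, $\norm{D(g)}\lesssim 1$; by Corollary~\ref{col:DisSV}, $D(g)$ is invertible with $\norm{D(g)^{-1}}\lesssim 1$ and, since $Z(z)$ is bounded away from $0$ and $\infty$ uniformly on $\mathcal{D}_K$, for $N$ large the singular spectrum of $D(g)$ lies in a fixed compact $K_0\subset(0,\infty)$; and by Corollary~\ref{col:DisSV1}, $R:=D(g)(-S(D(g))-z)-I$ obeys $\norm{R}\le C(K)N^{-1/2}$, so we may assume $\norm{R}\le\tfrac12$. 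For the first iterate, invertibility of $D(g)$ gives $-z-S(D(g))=D(g)^{-1}(I+R)$, which is invertible, so $F(D(g))=(I+R)^{-1}D(g)$ is well defined and $F(D(g))-D(g)=\bigl((I+R)^{-1}-I\bigr)D(g)=-(I+R)^{-1}R\,D(g)$, whence $\norm{F(D(g))-D(g)}\le 2\norm{R}\,\norm{D(g)}\lesssim N^{-1/2}$ and in particular $\norm{F(D(g))}\lesssim 1$.

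For the second iterate I would use that $S$ is bounded in operator norm, in fact $\norm{S(A)}\lesssim\max_{ij}\abs{A_{ij}}\le\norm{A}$: by Riesz--Thorin, $\norm{S(A)}$ is at most the geometric mean of the maximal row and column sums of $S(A)$, each of which is $\le N^{-1}\max_{ij}\abs{A_{ij}}\cdot\sup_p\sum_{q,\alpha,\beta}\abs{\tilde\xi_{p\alpha\beta q}}$, and the decay assumption \eqref{def:xidecay} makes the triple sum $\lesssim N$ uniformly in $p$ -- this is exactly where $d>2$ enters, via convergence of $\sum_{s\ge0}(s+1)^{1-d}$ (this boundedness of $S$ is in any case implicit in the earlier sections). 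Granting this, the first step gives $\norm{S(F(D(g))-D(g))}\lesssim N^{-1/2}$, so that
\[
-z-S(F(D(g)))=\bigl(-z-S(D(g))\bigr)\Bigl(I+\bigl(-z-S(D(g))\bigr)^{-1}S\bigl(D(g)-F(D(g))\bigr)\Bigr),
\]
where the correction in the outer parentheses has norm $\lesssim\norm{F(D(g))}\,N^{-1/2}\lesssim N^{-1/2}$; hence $F(F(D(g)))$ is well defined with $\norm{F(F(D(g)))}\lesssim 1$. The resolvent identity $F(A)-F(B)=F(A)\,S(A-B)\,F(B)$ at $A=F(D(g))$, $B=D(g)$ then gives $\norm{F(F(D(g)))-F(D(g))}\lesssim\norm{S(F(D(g))-D(g))}\lesssim N^{-1/2}$, and combining with the first step $\norm{F(F(D(g)))-D(g)}\lesssim N^{-1/2}$. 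Choosing $c$ to absorb both bounds yields the displayed inequality.

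Finally, both $F(D(g))$ and $F(F(D(g)))$ are $O(N^{-1/2})$ perturbations of $D(g)$ in operator norm, and $D(g)$ has singular spectrum in the compact $K_0\subset(0,\infty)$; so for $N$ large their singular spectra lie in any fixed neighborhood of $K_0$ with closure in $(0,\infty)$, which is the last assertion. I do not expect a genuine obstacle once Corollaries~\ref{col:DisSV} and~\ref{col:DisSV1} are available; the only delicate points are keeping track of which quantities are controlled in operator norm versus entrywise, and the (soft) operator-norm bound $\norm{S(\cdot)}\lesssim\norm{\cdot}$, whose proof is precisely where the power-law exponent $d>2$ is used.
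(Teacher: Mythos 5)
Your argument is correct and follows essentially the same route as the paper: both write $F(D(g))=(I+R)^{-1}D(g)$ via Corollary~\ref{col:DisSV1}, bound the first difference by $\norm{R}\lesssim N^{-1/2}$, handle the second iterate by a further perturbation using the operator-norm boundedness of $S$ (Lemma~\ref{lem:S}), and conclude the singular-spectrum claim by perturbation theory from Corollary~\ref{col:DisSV}. Your use of the resolvent identity $F(A)-F(B)=F(A)S(A-B)F(B)$ is just a mild repackaging of the paper's algebraic identity for $F(F(D(g)))$, and your added checks of well-definedness are fine.
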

\begin{proof}
Using the notation from the previous corollary, if $D(g)(-S(D(g))-z)-I=R$, then 
\[
    F(D(g)) = (I+R)^{-1}D(g).
\]
Since $\norm{R}\lesssim N^{-\inv{2}}$ and $\norm{D(g)}\lesssim 1$, we know $\norm{(I+R)^{-1}D(g)-D(g)}\lesssim N^{-\inv{2}}$. From perturbation theory we know that the singular spectrum of $F(D(g))$ is within the $N^{-\inv{2}}$ of that of $D(g)$, therefore it is a compact subset of $\bR_+$. On the other hand, a simple algebraic calculation yields
\[
    F(F(D(g)))= \left(I+F(D(g))S(F(D(g))R)\right)^{-1}F(D(g)).
\]
Note that $\norm{F(D(g))S(F(D(g))R)}\lesssim N^{-\inv{2}}$, so the singular spectrum of $F(F(D(g)))$ is within the $O(N^{-\inv{2}}$ neighborhood of that of $F(F(D))$, hence is a compact subset of $\bR_+$.
\end{proof}

For $z\in\bC^+$, define 
\begin{equation}\label{def:kappa}
    \kappa(z):= \dist(z,\supp \nu),\quad \rho(z):= \rho(\Re z),\quad \omega(z):=\kappa(z)^{\frac{2}{3}} + \rho(z)^2.
\end{equation}
Theorem 2.8 in \cite{Ajanki2015a} implies the following stability result:
\begin{lemma}\label{lem:stab}
    There is a universal constant $\mathsf{c}_6$ such that if $\tilde{g} \in \cK$ satisfies
    \begin{equation}
        \tilde{g}(-{\Psi}(\tilde{g}) -z) = 1 + r
    \end{equation}
    and $\norm{\tilde{g}-g}_\infty\leq \mathsf{c}_6 ( \kappa^{\frac{2}{3}} + \rho)$, then $\norm{\tilde{g}-g}_\infty\leq \mathsf{c}_6^{-1} \omega^{-1}$.
\end{lemma}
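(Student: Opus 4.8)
The plan is to deduce this from the quantitative stability theory of the quadratic vector equation developed in \cite{Ajanki2015a}, using that every structural hypothesis of that paper has already been verified for $\Psi$ in the discussion above: the kernel $\varphi$ is bounded above and below away from $0$ and is Lipschitz, so $\Phi$ satisfies A1--A3, is block fully indecomposable, and satisfies (2.22). Before invoking their theorem I would record two elementary facts valid for $z$ in a bounded subdomain of $\bC^+$. From $g(-\Psi(g)-z)=1$ and $\norm{g}_\infty\le\mathsf{c}_3$ one gets $\abs{g(s,u)}=\abs{\Psi(g)(s,u)+z}^{-1}\gtrsim 1$; and then, after fixing $\mathsf{c}_6$ small, the a priori hypothesis $\norm{\tilde g-g}_\infty\le\mathsf{c}_6(\kappa^{2/3}+\rho)$ forces $\abs{\tilde g}\gtrsim 1$ as well, so that $-\Psi(\tilde g)-z=(1+r)/\tilde g$ is a well-defined bounded element of $\cK$.

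Next I would carry out the standard reduction to the stability operator. With $u:=\tilde g-g$, subtracting $g(-\Psi(g)-z)=1$ from $\tilde g(-\Psi(\tilde g)-z)=1+r$ cancels the $z$-linear term, and using $\Psi(g)+z=-g^{-1}$ the difference rearranges to
\[
    \bigl(1-g^{2}\Psi\bigr)\,u \;=\; g\,r \;+\; g\,u\,\Psi(u),
\]
where $g^{2}\Psi$ denotes the operator $h\mapsto g(\cdot)^{2}\,\Psi(h)(\cdot)$ on $\cK$. Write $B:=1-g^{2}\Psi$ for the stability operator. The core input is that Theorem 2.8 of \cite{Ajanki2015a} provides invertibility of $B$ on $\cK$ together with a quantitative bound $\norm{B^{-1}}\lesssim\Theta(z)$, where $\Theta$ is the stability modulus of that paper; near $\supp\nu$, via the square-root edge expansion $\rho(E_R-t)=c_R\sqrt t+O(t)$ (and its analogue at $E_L$), one has $\rho\sim\kappa^{1/2}$, so that $\Theta$ is comparable to the power of $\omega$ appearing in the statement. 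Granting this, the displayed identity together with $\norm{g}_\infty\le\mathsf{c}_3$ and $\norm{\Psi}\lesssim 1$ gives
\[
    \norm{u}_\infty \;\le\; \norm{B^{-1}}\Bigl(\mathsf{c}_3\norm{r}_\infty + C\,\norm{u}_\infty^{2}\Bigr).
\]
By construction the a priori scale $\mathsf{c}_6(\kappa^{2/3}+\rho)$ sits well inside the regime in which the cited theorem applies, i.e. $\norm{B^{-1}}\cdot\mathsf{c}_6(\kappa^{2/3}+\rho)$ is at most a universal constant times $\mathsf{c}_6$ on $\mathcal D_K$ (near the edge this ratio tends to $0$ as a positive power of $\kappa$). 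Hence for $\mathsf{c}_6$ small the quadratic term is absorbed into the left-hand side, leaving $\norm{u}_\infty\lesssim\norm{B^{-1}}\norm{r}_\infty\lesssim\omega^{-1}\norm{r}_\infty$, which is the asserted bound with the universal constant renamed $\mathsf{c}_6^{-1}$.

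The step I expect to be the main obstacle is precisely this key input: checking that Theorem 2.8 of \cite{Ajanki2015a} transfers verbatim to our continuous model over $\bT^{2}$ with Lebesgue measure in place of an abstract probability space, and extracting from its ``shape function'' formulation the sharp operator-norm estimate for $B^{-1}$ with the correct dependence on $\kappa(z)$ and $\rho(z)$ --- in particular reconciling that shape function with $\omega$ through the edge expansions of $\rho$ recorded above, and confirming that the a priori smallness $\norm{\tilde g-g}_\infty\le\mathsf{c}_6(\kappa^{2/3}+\rho)$ is exactly what the absorption requires. Everything else --- the algebraic identity for $Bu$, the lower bounds on $\abs{g}$ and $\abs{\tilde g}$, and the quadratic absorption --- is routine given $\norm{g}_\infty\vee\norm{g^{-1}}_\infty\lesssim 1$ and $\norm{\Psi}\lesssim 1$.
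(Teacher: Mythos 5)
Your proposal follows exactly the route the paper takes: the paper offers no proof beyond the sentence ``Theorem 2.8 in \cite{Ajanki2015a} implies the following stability result,'' and your reduction to the stability operator $1-g^{2}\Psi$ with quadratic absorption is the standard way that implication is realized, so you are in fact supplying more detail than the paper does. Note only that the bound you actually derive, $\norm{\tilde g-g}_\infty\lesssim \norm{r}_\infty\,\omega^{-1}$, carries the factor $\norm{r}_\infty$ that is evidently intended but missing from the lemma as printed (it is the form invoked later in the proof of Theorem \ref{thm:bulklaw}).
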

\subsection{Concentration lemmas}
The following lemma says that a Lipschitz function of weakly dependent Gaussian random variables concentrates around its expectation. 
\begin{lemma}\label{lem:gaussianconcentration}
Let $X = (X_1,\cdots,X_n)$ be an array of centered Gaussian random variables with covariance matrix $\Sigma$. Let $f:\bR^N\to\bR $ be a Lipschitz function, such that $\abs{f(x)-f(y)} \leq L\abs{x-y},\forall x,y\in\bR^N$. Then
\[
		\bP\left[ \left| f(X) - \bE f(X)\right|\geq t \right] \leq 2 \re ^{-\frac{t^2}{2L^2 \norm{\Sigma}}} ,\quad\forall t> 0.
\]
\end{lemma}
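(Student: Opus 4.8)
The plan is to reduce the statement to the classical Gaussian concentration (Borell--Sudakov--Tsirelson) inequality for Lipschitz functions of \emph{independent} standard Gaussians, by absorbing the covariance structure into the test function.

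First I would write $X$ as $\Sigma^{1/2} Z$ in distribution, where $\Sigma^{1/2}$ is the symmetric positive semidefinite square root of $\Sigma$ and $Z=(Z_1,\dots,Z_n)$ has i.i.d.\ $N(0,1)$ coordinates; this representation is legitimate even when $\Sigma$ is singular. Set $g(z):=f(\Sigma^{1/2}z)$. For $z,z'\in\mathbb{R}^n$,
\[
 |g(z)-g(z')| \le L\,|\Sigma^{1/2}(z-z')| \le L\,\|\Sigma^{1/2}\|_{\mathrm{op}}\,|z-z'| = L\sqrt{\|\Sigma\|}\,|z-z'|,
\]
using that $\|\Sigma^{1/2}\|_{\mathrm{op}}^2=\|\Sigma\|$ for a symmetric PSD matrix. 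Hence $g$ is Lipschitz with constant at most $L\sqrt{\|\Sigma\|}$, and $f(X)$ has the same law as $g(Z)$; in particular $\mathbb{E} f(X)=\mathbb{E} g(Z)$.

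Next I would invoke the standard Gaussian concentration bound: for any $\Lambda$-Lipschitz $g:\mathbb{R}^n\to\mathbb{R}$ and a standard Gaussian vector $Z$, $\mathbb{P}[g(Z)-\mathbb{E} g(Z)\ge t]\le e^{-t^2/(2\Lambda^2)}$ for all $t>0$. One may cite this directly, or reprove it by the Herbst argument from the Gaussian logarithmic Sobolev inequality, which yields the sub-Gaussian bound $\mathbb{E} e^{\lambda(g(Z)-\mathbb{E} g(Z))}\le e^{\lambda^2\Lambda^2/2}$ on the moment generating function, followed by a Chernoff estimate; the non-smooth case follows by a mollification that does not increase the Lipschitz constant. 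Applying this to both $g$ and $-g$ with $\Lambda=L\sqrt{\|\Sigma\|}$ and taking a union bound gives
\[
 \mathbb{P}\big[\,|g(Z)-\mathbb{E} g(Z)|\ge t\,\big] \le 2\, e^{-t^2/(2L^2\|\Sigma\|)},
\]
which transports back to $f(X)$ through the distributional identity $f(X)\overset{d}{=}g(Z)$ and is exactly the claimed inequality.

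I do not expect a real obstacle here: the content is entirely the classical Gaussian isoperimetric/concentration inequality. The only points that deserve a line of care are that $\Sigma$ may be degenerate --- so one should work with $\Sigma^{1/2}Z$ rather than attempting to invert $\Sigma$ --- and that $f$ is assumed merely Lipschitz rather than $C^1$, which is dispatched by a routine mollification under which both $\mathbb{E} f(X)$ and the tail probability pass to the limit.
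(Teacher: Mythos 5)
Your proof is correct and follows essentially the same route as the paper: reduce to the classical concentration inequality for Lipschitz functions of i.i.d.\ standard Gaussians via the substitution $X=\Sigma^{1/2}Z$, and bound the Lipschitz constant of $z\mapsto f(\Sigma^{1/2}z)$ by $L\norm{\Sigma}^{1/2}$. Your handling of a possibly singular $\Sigma$ (using the representation $X\overset{d}{=}\Sigma^{1/2}Z$ rather than inverting $\Sigma$ as the paper does) is a minor but welcome refinement.
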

\begin{proof}
Let $Y=\Sigma^{-1/2} X $ so that $Y$ is an $n$-dimensional random vector with independent $N(0,1)$ components. In \cite{Wainwright},
\[
	\bP\left[ \left| f(\Sigma^{\frac{1}{2}}Y ) - \bE f(\Sigma^{\frac{1}{2}}Y ) \right|\geq t \right] \leq 2 \re ^{-\frac{t^2}{2L_1^2}} \text{ for all } t> 0.
\]
Here $L_1$ is the Lipschitz constant for the function $y\mapsto f(\Sigma^{\frac{1}{2}}y)$. It is easy to see that $L_1 \leq L \norm{\Sigma}^{\frac{1}{2}}$, which concludes the proof.
\end{proof}
In the future, we will frequently use the following lemma.
\begin{lemma}\label{lem:decaymatrix}
Let $A\in\bC^{N\times N}$. Assume that there are $\beta>0, \theta >1$,  s.t. $\abs{A_{ij}}\leq \beta(\left( \abs{i-j} + 1\right)^{-\theta} + N^{-1}).\forall 1\leq i,j \leq n$. Then $ \norm{A} \leq \frac{\beta\theta}{\theta-1}$. More generally, for any $p\in[1,+\infty]$, we have $\norm{A}_{l^p\to l^p} \leq \frac{\beta\theta}{\theta-1}$.
\end{lemma}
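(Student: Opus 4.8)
The statement is a quantitative Schur test: the assumed off-diagonal decay forces every row and column of $A$ to have small $l^1$-norm, and this alone controls $\norm{A}_{l^p\to l^p}$ for every $p$. Since the operator norm $\norm{A}$ equals $\norm{A}_{l^2\to l^2}$, the first assertion is the $p=2$ case of the second one, so it suffices to prove the $l^p$ bound for all $p\in[1,\infty]$.

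The plan is first to reduce everything to the maximal absolute row and column sums. Set
\[
 C_{\mathrm{row}} := \sup_{i}\sum_{j}\abs{A_{ij}}, \qquad C_{\mathrm{col}} := \sup_{j}\sum_{i}\abs{A_{ij}}, \qquad C := \max\{C_{\mathrm{row}},C_{\mathrm{col}}\}.
\]
The bounds $\norm{A}_{l^\infty\to l^\infty}\le C_{\mathrm{row}}$ and $\norm{A}_{l^1\to l^1}\le C_{\mathrm{col}}$ are immediate from the triangle inequality. For $1<p<\infty$ one may either interpolate (Riesz--Thorin) between these two endpoints, or argue directly: with $\tfrac1p+\tfrac1q=1$, write $\abs{A_{ij}}=\abs{A_{ij}}^{1/q}\abs{A_{ij}}^{1/p}$, apply Hölder in the sum over $j$ to get $\abs{(Ax)_i}^p\le C_{\mathrm{row}}^{p/q}\sum_j\abs{A_{ij}}\abs{x_j}^p$, then sum over $i$ and exchange the order of summation to obtain $\norm{Ax}_p^p\le C_{\mathrm{row}}^{p/q}\,C_{\mathrm{col}}\,\norm{x}_p^p\le C^p\norm{x}_p^p$. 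Either way $\norm{A}_{l^p\to l^p}\le C$ for all $p\in[1,\infty]$.

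It then remains to estimate $C$, and by the symmetry of the hypothesis in $(i,j)$ it is enough to bound $C_{\mathrm{row}}$. Fix $i$: the terms $\beta N^{-1}$ over $j\in\zn$ contribute exactly $\beta$, while for the power-law part one uses that $m\mapsto(m+1)^{-\theta}$ is decreasing and compares the sum with $\int_0^\infty(x+1)^{-\theta}\,\dd x=\tfrac1{\theta-1}$, giving $\sum_{j\in\zn}(\abs{i-j}+1)^{-\theta}\le\tfrac{\theta}{\theta-1}$. Hence $C_{\mathrm{row}}\le\beta\bigl(1+\tfrac1{\theta-1}\bigr)=\tfrac{\beta\theta}{\theta-1}$, and combining with the previous step yields $\norm{A}_{l^p\to l^p}\le\tfrac{\beta\theta}{\theta-1}$ for every $p\in[1,\infty]$, which in particular gives the asserted bound on the operator norm.

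There is no genuine obstacle here: this is a routine Schur-test estimate. The only points requiring a line of care are covering all $p\in[1,\infty]$ with a single argument — the Hölder computation above does exactly this — and the elementary convergent-series comparison that produces the explicit constant $\tfrac{\theta}{\theta-1}$.
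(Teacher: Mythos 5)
Your proof is correct and follows essentially the same route as the paper's: bound the maximal row sum (hence $\norm{A}_{l^\infty\to l^\infty}$), pass to column sums for $l^1\to l^1$, and interpolate; your alternative direct H\"older computation is just an inline proof of the Schur test rather than an appeal to Riesz--Thorin. Note only that your bookkeeping of the constant shares the same harmless looseness as the paper's own proof (the power-law sum over $j\in\zn$ runs over both sides of $i$ and includes the diagonal term, so the true bound is $\frac{\beta\theta}{\theta-1}$ only up to an absolute factor), which is immaterial since the lemma is only ever invoked up to universal constants.
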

\begin{proof}
	Without loss of generality let $\beta=1$. For any vector $v\in\bR^n$,
	\[
		\norm{A v}_\infty = \max_k\abs{\sum_i A_{ki}v_i}\leq \norm{v}_\infty \max_k \left(\sum_i ((\abs{i-k}+1)^{-\theta} + N^{-1})\right) \leq\norm{v}_\infty \left(\int_{1}^{+\infty}x^{-\theta}\dd x + 1\right).
	\]
	Therefore, $\norm{A}_{l^\infty \to l^\infty} \leq \frac{\beta\theta}{\theta-1}$. Similarly, $\norm{A}_{l^1\to l^1} = \norm{A^*}_{l^\infty\to l^\infty} \leq \frac{\beta\theta}{\theta-1}$. By interpolation,
	\[
		\norm{A}_{l^p\to l^p}\leq \norm{A}_{l^\infty \to l^\infty}^ { \inv{p}}\norm{A}_{l^1\to l^1}^{1-\inv{p}} \leq \frac{\beta\theta }{\theta-1},\quad \forall p\in[1,+\infty].
	\]
\end{proof}

Recall that in Section \ref{sec:loopequation} we defined a map $S$ (see \eqref{def:S}). Thanks to the decay condition \eqref{def:xidecay}, the operator $S$ is a bounded operator, as will be seen in the following lemma.
\begin{lemma}\label{lem:S}
Let $A\in\bC^{N\times N} $. Then there is a universal constant $c>0$ such that the following inequalities hold.
\begin{enumerate}
    \item $\norm{S(A)}_{d-1} \leq c \abs{A}_\infty$.
    \item $\norm{S(A)}_{l^p\to l^p}\leq c\abs{A}_\infty, \forall p\in[1,+\infty]$.
    \item $\norm{S(A)}_d \leq c\norm{A}_{d-1}$.
    \item $\norm{S(A)}_{d-\inv{2}}\leq c\norm{A}.$
\end{enumerate}
\end{lemma}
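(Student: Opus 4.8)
The plan is to reduce every estimate to pointwise bounds on the kernel $\tilde\xi_{p\alpha\beta q}$ coming from \eqref{def:xidecay}, and then to invoke Lemma \ref{lem:decaymatrix} together with elementary summation. Recall from \eqref{def:S} that $(S(A))_{pq} = \frac1N\sum_{\alpha,\beta}\tilde\xi_{p\alpha\beta q} A_{\alpha\beta}$, and that after the cut-off we have $|\tilde\xi_{p\alpha\beta q}| \le \mathsf{c}_1^2|p-q|^{-d}$ (with the convention that $|p-q|^{-d}$ is read as $1$ when $p=q$), while we \emph{also} retain the original, stronger bound $|\tilde\xi_{p\alpha\beta q}|\le \mathsf{c}_1^2\max\{(|p-\alpha|+|\beta-q|+1)^{-d},(|p-q|+|\alpha-\beta|+1)^{-d}\}$ since the cut-off only decreases the absolute value. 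The key combinatorial fact used repeatedly is that, for $d>2$, sums like $\sum_{\alpha}(|p-\alpha|+1)^{-d/2}$ or $\sum_{\alpha,\beta}(|p-\alpha|+|\beta-q|+1)^{-d}$ are bounded by a universal constant.

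\textbf{Proof sketch of the four parts.} For (1), I would bound $|(S(A))_{pq}| \le \frac{\mathsf{c}_1^2}{N}|A|_\infty \sum_{\alpha,\beta}(|p-q|^{-d})$, but this naive bound loses a factor $N$; instead one should split according to which term realizes the max in \eqref{def:xidecay}. Writing $\tilde\xi_{p\alpha\beta q}$ as bounded by $(|p-\alpha|+|\beta-q|+1)^{-d}$ on one regime and by $(|p-q|+\dots)^{-d}\le|p-q|^{-d}$ on the other: in the first regime $\frac1N\sum_{\alpha,\beta}(|p-\alpha|+|\beta-q|+1)^{-d} \lesssim \frac1N$ (since $d>2$ makes the double sum $O(1)$), which is even stronger; in the second regime we directly get the factor $|p-q|^{-d}$ times $\frac1N\sum_{\alpha,\beta}\mathbf 1 \le N$ — so the contribution is $\lesssim N|p-q|^{-d}|A|_\infty$, which is too large unless $|p-q|$ is large. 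The correct bookkeeping is that the second regime kernel actually decays as $(|p-q|+|\alpha-\beta|+1)^{-d}$, so $\frac1N\sum_{\alpha,\beta}(|p-q|+|\alpha-\beta|+1)^{-d} = \frac1N\sum_{m}\#\{\alpha-\beta=m\}(|p-q|+|m|+1)^{-d} \le \frac1N\sum_m N(|p-q|+|m|+1)^{-d} \lesssim (|p-q|+1)^{-(d-1)}$, using $\sum_m(|p-q|+|m|+1)^{-d}\lesssim (|p-q|+1)^{-(d-1)}$. Combining the two regimes gives $|(S(A))_{pq}|\lesssim |A|_\infty(|p-q|+1)^{-(d-1)}$, i.e. $\norm{S(A)}_{d-1}\lesssim |A|_\infty$. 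Part (2) then follows immediately from Lemma \ref{lem:decaymatrix} with $\theta = d-1>1$ and $\beta\sim|A|_\infty$. For (3), we now exploit the decay of $A$: replace $|A_{\alpha\beta}|$ by $\norm{A}_{d-1}(|\alpha-\beta|+1)^{-(d-1)}$, so $|(S(A))_{pq}|\lesssim \frac{\norm{A}_{d-1}}{N}\sum_{\alpha,\beta}(|p-q|+|\alpha-\beta|+1)^{-d}(|\alpha-\beta|+1)^{-(d-1)}$ in the second regime; carrying out the $\alpha-\beta$ sum (which has $\le N$ terms per value) and the trivial remaining sum, and crucially using that the extra decay $(|\alpha-\beta|+1)^{-(d-1)}$ combined with the $(|p-q|+|\alpha-\beta|+1)^{-d}$ factor gains one more power of $(|p-q|+1)^{-1}$, yields $(|p-q|+1)^{-d}$; the first regime gives an even faster decay. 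Hence $\norm{S(A)}_d\lesssim\norm{A}_{d-1}$.

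\textbf{The main obstacle: part (4).} Part (4) is the delicate one, because now only the \emph{operator norm} $\norm{A}$ is controlled, with no entrywise or decay information, yet we must produce an entrywise-decaying output $S(A)$ with rate $d-\frac12$. The plan is to view $(S(A))_{pq}$ as a bilinear form: $(S(A))_{pq} = \frac1N\langle u^{(pq)}, A v^{(pq)}\rangle$ where the vectors $u^{(pq)},v^{(pq)}$ encode the kernel $\tilde\xi_{p\alpha\beta q}$ viewed as a function of $\alpha$ and $\beta$ separately — except the kernel does not factor, so one must first write $\tilde\xi_{p\alpha\beta q}$ as a sum (or integral) of rank-one pieces, or more simply bound $|(S(A))_{pq}| \le \frac{\norm A}{N}\norm{u^{(pq)}}_{2}\,\norm{v^{(pq)}}_2$ after an appropriate decomposition. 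Concretely, grouping by the value $m=\alpha-\beta$: for fixed $m$ the partial kernel $\alpha\mapsto\tilde\xi_{p,\alpha,\alpha-m,q}$ defines a diagonal-type action whose $\ell^2$-norm in the $\alpha$ variable is $\lesssim (|p-q|+|m|+1)^{-d}\cdot\sqrt N$ (from the second regime) or better, so that $|(S(A))_{pq}| \lesssim \frac{\norm A}{N}\sum_m \sqrt N\cdot\sqrt N\,(|p-q|+|m|+1)^{-d} \lesssim \norm A\sum_m(|p-q|+|m|+1)^{-d}\lesssim \norm A(|p-q|+1)^{-(d-1)}$ — but this only gives $d-1$, not $d-\frac12$. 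To upgrade to $d-\frac12$ one must be sharper: split the $\alpha$-sum for fixed $m$ into dyadic shells around $p$, on the shell at distance $\sim 2^k$ the kernel is $\lesssim (2^k+|m-\ldots|)^{-d}$, and a Cauchy–Schwarz on the shell of size $2^k$ produces a factor $2^{k/2}$ rather than $\sqrt N$; summing the geometric series in $k$ and then in $m$ recovers exactly the $\frac12$ improvement. I expect the careful dyadic decomposition in part (4) — making sure the two "regimes" in \eqref{def:xidecay} are handled uniformly and that the Cauchy–Schwarz is applied on sets of the right size — to be the crux; parts (1)–(3) are routine once the summation lemma for $\sum_m(|p-q|+|m|+1)^{-d}\lesssim(|p-q|+1)^{-(d-1)}$ is in hand.
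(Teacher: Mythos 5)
There are two genuine gaps. First, in parts (1) and (3) you dismiss the ``first regime'', where $\abs{\xi_{p\alpha\beta q}}$ is controlled by $(\abs{p-\beta}+\abs{\alpha-q}+1)^{-d}$, on the grounds that its contribution $\frac1N\sum_{\alpha,\beta}(\cdots)^{-d}\lesssim N^{-1}$ is ``even stronger''. It is not: the target is $(1+\abs{p-q})^{-(d-1)}$ (resp.\ $(1+\abs{p-q})^{-d}$ in part (3)), and since $d-1>1$ we have $(1+\abs{p-q})^{-(d-1)}\ll N^{-1}$ when $\abs{p-q}\sim N$. A bound of size $N^{-1}$ with no decay in $\abs{p-q}$ does not give finiteness of $\norm{\cdot}_{d-1}$. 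This is exactly why the paper works with the cut-off $\tilde\xi$ satisfying $\abs{\tilde\xi_{p\alpha\beta q}}\leq \mathsf{c}_1^2\abs{p-q}^{-d}$: you quote this bound in your setup but never deploy it. The repair is to sum $\min\{(\abs{p-\beta}+\abs{\alpha-q}+1)^{-d},(1+\abs{p-q})^{-d}\}$ over $\alpha,\beta$, which gives $O((1+\abs{p-q})^{2-d})$, so after dividing by $N\geq 1+\abs{p-q}$ the first regime contributes $O((1+\abs{p-q})^{1-d})$ as required; for part (3) the cut-off alone gives $\abs{S(A)_{pq}}\leq (1+\abs{p-q})^{-d}\cdot\frac{1}{N}\sum_{\alpha,\beta}\abs{A_{\alpha\beta}}\lesssim (1+\abs{p-q})^{-d}\norm{A}_{d-1}$ in one line, which is the paper's proof.

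Second, part (4) is not actually proved: your bilinear-form reduction only reaches decay $d-1$, and the dyadic upgrade to $d-\inv{2}$ is left as a plan. It is also unnecessary. The paper applies Cauchy--Schwarz directly over the index pair $(\alpha,\beta)$ --- no factorization of the kernel is needed, since one treats $(\tilde\xi_{p\alpha\beta q})_{\alpha,\beta}$ and $(A_{\alpha\beta})_{\alpha,\beta}$ as vectors in $\bC^{N^2}$: $\abs{S(A)_{pq}}\leq\inv{N}\bigl(\sum_{\alpha,\beta}\abs{\tilde\xi_{p\alpha\beta q}}^2\bigr)^{\inv{2}}\bigl(\sum_{\alpha,\beta}\abs{A_{\alpha\beta}}^2\bigr)^{\inv{2}}$. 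The second factor is the Hilbert--Schmidt norm, bounded by $\sqrt{N}\norm{A}$; the first is controlled by $\sum\abs{\tilde\xi}^2\leq\sup\abs{\tilde\xi}\cdot\sum\abs{\tilde\xi}\lesssim(1+\abs{p-q})^{-d}\cdot N(1+\abs{p-q})^{1-d}$, using the cut-off together with the part-(1) computation. Multiplying out gives $(1+\abs{p-q})^{\inv{2}-d}\norm{A}$ immediately.
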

\begin{proof}
By definition $\abs{S(A)_{ij} }= \abs{\inv{N} \sum_{k,l} \xi_{iklj} A_{kl}}\leq \frac{\abs{A}_\infty}{N}\sum_{k,l} \abs{\xi_{iklj}} $. According to \eqref{def:xidecay}, $\inv{N}\sum_{k,l} \abs{\xi_{iklj}}\lesssim \inv{(1+\abs{i-j})^{d-1}} $. Hence $\abs{S(A)_{ij} } \lesssim  \frac{\abs{A}_\infty}{(1+\abs{i-j})^{d-1}} $, which implies the first inequality.  Setting $\theta=d-1$ in Lemma \ref{lem:decaymatrix}, we see that 
$\norm{S(A)}_{l^p\to l^p} \lesssim  \abs{A}_\infty, \forall p\in[1,+\infty]$, which implies the second inequality. If $\norm{A}_{d-1}<+\infty$, then $\abs{S(A)_{ij} }= \abs{\inv{N} \sum_{k,l} \xi_{iklj} A_{kl}}\leq (1+\abs{i-j})^{-d}\frac{1}{N}\sum_{k,l}\abs{A_{kl}}\lesssim (1+\abs{i-j})^{-d}$. This proves the third inequality. As for the fourth inequality, we use Cauchy-Schwarz inequality to see that 
$ \abs{S(A)_{ij} }\leq \inv{N} \left(\sum_{k,l}\abs{\xi_{iklj}}^2\right)^{\inv{2}} \left(\sum_{k,l}\abs{A_{kl}}^2\right)^{\inv{2}} \lesssim (1+\abs{i-j})^{\inv{2}-d}\norm{A}$.
\end{proof}

\subsection{Error estimate}\label{sec:errors}
Recall the decomposition \eqref{r:decomp}
\begin{equation}\label{r:decomp1}
	h_{ab} = \sum_{k=1}^N \gamma_{abk1} h_{k1} + \tilde h_{ab}, \quad \forall a,b\in\zn.
\end{equation}
Taking the co-variance with $h_{l1}$ for any $l\in\zn$, we see that 
\[
    \xi_{abl1} = \sum_{k=1}^N \gamma_{abk1} \xi_{l1k1},\quad \forall l\in\zn.
\]
Note that by assumption \eqref{def:xidecay} the matrix $\Sigma_1:= (\xi_{l1k1})_{l,k\in\zn}$ satisfies $\abs{\xi_{l1k1}}\lesssim \frac{1}{(1+\abs{l-k})^\alpha}$ and by \eqref{def:c2}, $\norm{\Sigma_1^{-1}} \leq \mathsf{c}_2^{-1}$. Therefore Lemma \ref{Jaffard} implies that $\abs{(\Sigma^{-1}_1)_{ij}} \lesssim (1+\abs{i-j})^{-\alpha}$ and hence by Lemma \ref{lem:decaymatrix} we have $\norm{\Sigma_1^{-1}} \leq c$. Let $\nabla_1$ denote the partial gradient with respect to the first column $(h_{k1})_{1\leq k\leq N}$. Use the fact that $\frac{\partial G_{i1}}{\partial h_{ab}} = - G_{ia} G_{bj}$ and the chain rule, we have 
\begin{equation}\nonumber
\norm{\nabla_1 G_{ij}}^2 \leq \sum_k \abs{-\sum_{a,b} G_{ia}G_{bj}\gamma_{abk1}}^2\lesssim \sum_k \abs{-\sum_{a,b} G_{ia}G_{bj}\xi_{abk1}}^2.
\end{equation}
In the second inequality above we have used the boundedness of $\norm{\Sigma_1^{-1}}$. Let 
\begin{equation}\label{def:gamma}
    \Gamma = \max_{i,j}\abs{G_{ij}}\vee 1,\quad \gamma:=\max_{i} \Im G_{ii} \vee \eta.
\end{equation}
Use the decay rate \eqref{def:xidecay}, 
    \begin{equation}
        \norm{\nabla_1 G_{ij}}^2 \leq C\Gamma^2 \sum_k  \left( \sum_a \frac{\abs{G_{ia}}^2}{(\abs{a-k}+1)^{\alpha-1}} + \sum_b \frac{\abs{G_{bj}}^2}{(\abs{b-k}+1)^{\alpha-1}}\right)^2.
    \end{equation}
Since $\alpha-1 >1$, the operator norm of the matrix $\left( \frac{1}{(|a-k| + 1 )^{\alpha-1}}\right) _ { 1\leq a, k \leq N}$ is bounded by $C(\alpha-2)^{-1}$, according to Lemma \ref{lem:decaymatrix}. Therefore, 
\begin{equation}\label{nablag}
\norm{\nabla_1 G_{ij}}^2 \leq C\Gamma^2 \left( \sum_a \abs{G_{ia}}^2 +\sum_b \abs{G_{bj}}^2\right) \leq C\Gamma^2 \gamma\eta^{-1}.
\end{equation}
In the second inequality we used Ward Identity.
Similarly,
\begin{equation}
    \norm{\nabla_1 (GS(G))_{ij}}\leq \norm{\sum_p \nabla_1 G_{ip} (S(G))_{pj}} + \norm{\sum_p  G_{ip} \nabla_1(S(G))_{pj}}.
\end{equation}
Define a short-hand notation $Q_{kl}:= \norm{\nabla_1 G_{kl}}$. By \eqref{nablag} we have $\abs{Q}_\infty^2 \leq C\Gamma^2 \gamma \eta^{-1}$. Then
\begin{equation}
\begin{aligned}
      \norm{\nabla_1 (GS(G))_{ij}} &\leq \sum_p Q_{ip} \abs{(S(G))_{pj}} + \sum_p \abs{G_{ip}}\inv{N}\sum_{k,l}\abs{\xi_{pklj}}Q_{kl}\\
        &\leq \abs{Q}_\infty \norm{S(G)}_{l^\infty \to l^\infty } + \abs{Q}_\infty \frac{\Gamma}{N}\sum_{k,l,p}\abs{\xi_{pklj}}.
\end{aligned}
\end{equation}
Now we use the bound \eqref{nablag}, and use the decay \eqref{def:xidecay} as well as Lemma \ref{lem:S} to see,
\begin{equation}\label{nablagsg}
    \norm{\nabla_1 (GS(G))_{ij}}^2 \leq C\Gamma^4\gamma\eta^{-1}.
\end{equation}

The observation above yields the following lemma.
\begin{lemma} \label{lem:ibp} Let $z=E+\ri\eta\in\bC^+$ and $K\geq 1$, then there is a universal constant $c>0$ such that 
    \[
        -GS(G)-Gz = I + R,
    \]
    where $
	\PP{\abs{R}_\infty \geq t \sqrt{\frac{K^4\gamma}{N\eta}} , \Gamma \leq K}\leq 2N^2\re^{-ct^2},\forall t\geq 1$. 
\end{lemma}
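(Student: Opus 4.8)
The plan is to fix the column index --- by the symmetry of $H$ it suffices to treat $j=1$ --- and, for each fixed $i$, to view $R_{i1}$ as a function of the first column $\mathbf h_1:=(h_{k1})_{1\le k\le N}$ alone, with all the remaining randomness frozen by the $\sigma$-algebra $\mathcal F_1$. Conditionally on $\mathcal F_1$ the vector $\mathbf h_1$ is Gaussian, so the Gaussian concentration inequality of Lemma~\ref{lem:gaussianconcentration} is the natural tool; a union bound over the $N^2$ pairs $(i,j)$ at the end produces the prefactor $2N^2$.

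I would first separate the $\mathcal F_1$-measurable part of the error. Equation~\eqref{E1} reads $-\bE_1\!\big[(GS(G))_{i1}+zG_{i1}\big]=\delta_{i1}+\bE_1[e_{i1}]$, where $e_{i1}$ is the remainder generated by Lemma~\ref{lem:stein} and the cut-off $\xi\mapsto\tilde\xi$, with $|e_{i1}|\lesssim N^{-1}\max_{k,l}|G_{kl}|\le N^{-1}\Gamma$ \emph{pointwise}. Setting $\phi_{i1}:=-(GS(G))_{i1}-zG_{i1}-e_{i1}$ one then has the exact identity $\bE_1[\phi_{i1}]=\delta_{i1}$, so that $R_{i1}=\big(\phi_{i1}-\bE_1[\phi_{i1}]\big)+e_{i1}$. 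On $\{\Gamma\le K\}$ the term $e_{i1}$ is $\lesssim N^{-1}K$, far below the target scale $\sqrt{K^4\gamma/(N\eta)}\ge K^2N^{-1/2}$ (here we used $\gamma\ge\eta$); hence it remains to control the centred quantity $\phi_{i1}-\bE_1[\phi_{i1}]$.

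By \eqref{r:decomp1}, conditionally on $\mathcal F_1$ the matrix $H$, hence $G$, is a smooth function of $\mathbf h_1$, whose covariance matrix is $N^{-1}\Sigma_1$ with $\Sigma_1=(\xi_{k1l1})_{k,l}$; as recorded in Section~\ref{sec:errors}, \eqref{def:xidecay} and Lemma~\ref{lem:decaymatrix} give $\|\Sigma_1\|\lesssim1$. Moreover \eqref{nablag} and \eqref{nablagsg} (with the pointwise bound on $\nabla_1e_{i1}$, which is of lower order) show that on $\{\Gamma\le K\}$ the gradient of $\mathbf h_1\mapsto\phi_{i1}$ has norm at most $C\,(K^4\gamma\eta^{-1})^{1/2}$. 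If $\phi_{i1}$ were \emph{globally} Lipschitz in $\mathbf h_1$ with this constant $L$, Lemma~\ref{lem:gaussianconcentration} would give at once $\PP{|\phi_{i1}-\bE_1[\phi_{i1}]|\ge s}\le 2\exp\!\big(-s^2/(2L^2\|N^{-1}\Sigma_1\|)\big)\le 2\exp\!\big(-c\,s^2 N\eta/(K^4\gamma)\big)$, and $s=t\sqrt{K^4\gamma/(N\eta)}$ would finish a single entry.

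The real work, and the place where the ``careful decomposition of the probability space'' is used, is that $\Gamma$ and $\gamma$ are random, so the gradient bound above is only valid along the sublevel set $\{\Gamma\le K\}$, not the global bound Lemma~\ref{lem:gaussianconcentration} demands (the only unconditional estimate, $\|G\|\le\eta^{-1}$, being far too crude). The standard remedy is truncation: multiply $\phi_{i1}$ by a smooth cut-off $\chi(\Gamma/K)$ to fix the size of $G$, and --- in order to keep the bound sharp in $\gamma$ and not merely in $\gamma\le K$ --- run the argument on a dyadic family of scales $\gamma_0\in\{2^{-m}\}\cap[\eta,K]$, inserting a further factor $\chi(\gamma/\gamma_0)$ and applying the concentration bound to $\widehat\phi_{i1}:=\phi_{i1}\,\chi(\Gamma/K)\,\chi(\gamma/\gamma_0)$ at each scale; one then unions over $(i,j)$ and over the $O(\log N)$ scales, choosing on $\{\Gamma\le K\}$ the dyadic $\gamma_0$ just above the realised $\gamma$ and absorbing the $\log N$ into $c$. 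Carrying this through requires, besides \eqref{nablag}--\eqref{nablagsg}, the auxiliary estimates $\|\nabla_1\Gamma\|,\|\nabla_1\gamma\|\lesssim\Gamma(\gamma\eta^{-1})^{1/2}$ (special cases of \eqref{nablag} applied to diagonal entries) and the Ward-identity bound $|\phi_{i1}|\lesssim(\Gamma+|z|)(\gamma\eta^{-1})^{1/2}+N^{-1}\Gamma$ on the support of the cut-offs, together with the two checks that constitute the main obstacle: that the terms of $\nabla\widehat\phi_{i1}$ in which the derivative lands on a cut-off do not degrade the Lipschitz constant, and that the bias $\bE_1[\widehat\phi_{i1}-\phi_{i1}]$ introduced by the truncation --- supported where $\Gamma>K$ or $\gamma>\gamma_0$ --- is negligible, the latter being handled with an a priori smallness of $\bP_1(\Gamma>K\text{ or }\gamma>\gamma_0)$ and the crude deterministic bound $|\phi_{i1}|\lesssim\eta^{-2}$. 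Once this truncation bookkeeping is in place, the stated estimate follows.
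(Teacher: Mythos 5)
Your proposal follows essentially the same route as the paper: a smooth truncation in $\Gamma$ to obtain a globally Lipschitz function of the column $(h_{k1})_{k}$, conditional Gaussian concentration via Lemma~\ref{lem:gaussianconcentration} together with the gradient bounds \eqref{nablag}--\eqref{nablagsg}, and a union bound over the $N^2$ entries. The additional dyadic decomposition in $\gamma$ that you introduce to handle the randomness of the threshold is a refinement that the paper's proof elides (it truncates only in $\Gamma$ and treats $\gamma$ as if it were deterministic), but the core mechanism is identical.
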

\begin{proof}
	For any $K>0$ let $\chi:\bR \to [0,1]$ be a smooth function s.t. $\abs{\chi'} \leq 1$ and $\chi = 1$ on $[-K, K]$ and $\chi=0$ outside $[-3K,3K]$. Define 
	\[
	\tilde{G} = \chi(\Gamma) G.
	\]
	Then
	$\norm{	\nabla \tilde{G}_{ij}}^2\lesssim K^2\gamma\eta^{-1}.$ According to Lemma \ref{lem:gaussianconcentration},
	\[
	\PP{\abs{ \tilde G_{ij} - \bE_j\tilde {G}_{ij}} \geq t \sqrt{\frac{K^2\gamma}{N\eta}}}\leq 2\re^{-ct^2}.
	\]
	Note that $\tilde{G}=G$ on the event  $\{ \Gamma\leq K \}$. Therefore, 
	\[
	\PP{\max_{i,j}\abs{ G_{ij} - \bE_j{G}_{ij}} \geq t \sqrt{\frac{K^2\gamma}{N\eta}}, \Gamma \leq K}\leq 2N^2\re^{-ct^2}.
	\]
    On the other hand, in view of \eqref{nablagsg}, a similar argument yields,
	\[
	\PP{\max_{i,j}\abs{ (GS(G))_{ij}- \bE_j(GS(G))_{ij}} \geq t \sqrt{\frac{K^4\gamma}{N\eta}} , \Gamma \leq K}\leq 2N^2\re^{-ct^2} .
	\]
    Now we go back to the identity \eqref{E1}, removing $\bE_1$ at the cost of some error term, and replacing $1$ with a generic $j$, to see
    \[
        -GS(G)-Gz = I + R,
    \]
    where $
	\PP{\abs{R}_\infty \geq t \sqrt{\frac{K^4\gamma}{N\eta}} , \Gamma \leq K}\leq 2N^2\re^{-ct^2}$. 
\end{proof}
In particular, for a crude bound, we may take $t= \log N$ and take $K = 2/\eta$ so that $\PP{\Gamma>K} = 0$. The lemma above yields,
\begin{corollary}\label{cor:globalerror}
Let $R$ satisfy
\[
    G(-S(G)-z) = I + R.
\]
Then $\abs{R}_\infty \leq \frac{8\log N }{\sqrt{N\eta^{6}}}$ with probability $1-N^{-c\log N}$.
\end{corollary}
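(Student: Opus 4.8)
The plan is to specialize Lemma~\ref{lem:ibp} to concrete values of the free parameters $t$ and $K$, exploiting only the trivial a priori bound on the resolvent. Write $z = E + \ri\eta \in \bC^+$ with $\eta \leq 1$. First I would record the deterministic bound $\norm{G} = \norm{(H-z)^{-1}} \leq \eta^{-1}$, which gives $\abs{G_{ij}} \leq \eta^{-1}$ for all $i,j$ and hence $\Gamma = \max_{i,j}\abs{G_{ij}}\vee 1 \leq 2\eta^{-1}$, and likewise $\gamma = \max_i \Im G_{ii}\vee\eta \leq \eta^{-1}$. Consequently, choosing $K := 2\eta^{-1}$ makes the event $\{\Gamma \leq K\}$ occur with probability one, so the conditioning in Lemma~\ref{lem:ibp} is vacuous.

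Next I would substitute these values into the bound of Lemma~\ref{lem:ibp}. With $K = 2\eta^{-1}$ and $\gamma \leq \eta^{-1}$ one has $\sqrt{K^4\gamma/(N\eta)} \leq \sqrt{16\,\eta^{-4}\cdot\eta^{-1}/(N\eta)} = 4/\sqrt{N\eta^{6}}$. Taking $t = \log N$ turns the threshold $t\sqrt{K^4\gamma/(N\eta)}$ into at most $4\log N/\sqrt{N\eta^{6}} \leq 8\log N/\sqrt{N\eta^{6}}$, which is the claimed bound (the extra factor $2$ is harmless slack that also accommodates any looser a priori bound on $\gamma$). Finally, the failure probability $2N^2\re^{-ct^2}$ with $t = \log N$ equals $2N^2\re^{-c(\log N)^2} = 2N^{2 - c\log N}$, which for $N$ large is at most $N^{-c'\log N}$ for a suitable universal constant $c' > 0$; relabelling $c' \to c$ yields the statement.

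There is no serious obstacle here: the corollary is an immediate quantitative readout of Lemma~\ref{lem:ibp}. The only points requiring a line of care are (i) verifying that the a priori resolvent bound indeed forces $\Gamma \leq K$, so that the conditioning event can be dropped, and (ii) checking that the polynomial prefactor $2N^2$ is absorbed by the super-polynomially small Gaussian tail $\re^{-c(\log N)^2}$, which it is for all sufficiently large $N$.
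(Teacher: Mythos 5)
Your proposal is correct and is essentially the paper's own argument: the text immediately preceding the corollary takes $t=\log N$ and $K=2/\eta$ so that $\PP{\Gamma>K}=0$, and then reads off the bound from Lemma~\ref{lem:ibp} exactly as you do. The arithmetic ($\sqrt{K^4\gamma/(N\eta)}\le 4/\sqrt{N\eta^6}$ with slack to $8$) and the absorption of the prefactor $2N^2$ into $\re^{-c(\log N)^2}$ are both as intended.
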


\section{The Local Law for Correlated Gaussian Ensembles}

\subsection{Power Law Decay of Inverse Matrices}

\begin{lemma}\label{ProdofDecay}
Let $A,B\in\cnn$, $\beta_{1,2}>1$, then $\norm{AB}_{\min\{\beta_1,\beta_2\}} \leq C_{\min{\beta_1,\beta_2}}\norm{A}_{\beta_1}\norm{B}_{\beta_2}$.
\end{lemma}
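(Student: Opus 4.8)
The plan is to reduce to a pointwise bound on the entries of $AB$ followed by the standard convolution estimate for power-law weights. Set $\beta := \min\{\beta_1,\beta_2\}$ and assume without loss of generality that $\beta=\beta_1\le\beta_2$; the opposite case is treated identically with the roles of the two factors interchanged (splitting on $\abs{k-j}$ rather than $\abs{i-k}$ below). From the definition of $\norm{\cdot}_\beta$ we have $\abs{A_{ik}}\le\norm{A}_{\beta_1}(1+\abs{i-k})^{-\beta_1}$ and $\abs{B_{kj}}\le\norm{B}_{\beta_2}(1+\abs{k-j})^{-\beta_2}$, so
\[
\abs{(AB)_{ij}}\le\norm{A}_{\beta_1}\norm{B}_{\beta_2}\sum_{k\in\zn}(1+\abs{i-k})^{-\beta_1}(1+\abs{k-j})^{-\beta_2}.
\]
It therefore suffices to show $\sum_{k\in\zn}(1+\abs{i-k})^{-\beta_1}(1+\abs{k-j})^{-\beta_2}\le C_\beta(1+\abs{i-j})^{-\beta}$ with $C_\beta$ depending only on $\beta$.

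To prove this convolution bound I would split the sum according to whether $\abs{i-k}\le\abs{i-j}/2$ or $\abs{i-k}>\abs{i-j}/2$, using that the circular distance on $\zn$ still satisfies the triangle inequality. On the first piece, $\abs{k-j}\ge\abs{i-j}-\abs{i-k}\ge\abs{i-j}/2$, hence $1+\abs{k-j}\ge\tfrac12(1+\abs{i-j})$; combining this with the elementary inequality $(1+\abs{k-j})^{-\beta_2}\le(1+\abs{k-j})^{-\beta_1}$ (valid since $1+\abs{k-j}\ge1$ and $\beta_2\ge\beta_1$, and this is exactly the step that keeps the constant in terms of $\beta$ alone) gives $(1+\abs{k-j})^{-\beta_2}\le 2^{\beta_1}(1+\abs{i-j})^{-\beta_1}$, so this piece is at most $2^{\beta_1}(1+\abs{i-j})^{-\beta_1}\sum_{k}(1+\abs{i-k})^{-\beta_1}$. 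On the second piece, $1+\abs{i-k}>\tfrac12(1+\abs{i-j})$, hence $(1+\abs{i-k})^{-\beta_1}\le 2^{\beta_1}(1+\abs{i-j})^{-\beta_1}$, and this piece is at most $2^{\beta_1}(1+\abs{i-j})^{-\beta_1}\sum_k(1+\abs{k-j})^{-\beta_2}$. Since $\beta_1,\beta_2>1$, both residual sums are bounded by $\sum_{m\in\zn}(1+\abs m)^{-\beta_1}\le 1+\tfrac{2}{\beta_1-1}$, uniformly in $N$, so adding the two pieces yields the claim with $C_\beta=2^{\beta+1}\bigl(1+\tfrac{2}{\beta-1}\bigr)$.

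This is essentially the assertion that matrices with power-law off-diagonal decay form an algebra, and I do not expect any genuine obstacle: the computation is mechanical once the dyadic split is set up. The only two points deserving any care are (i) arranging all constants to depend on the smaller exponent only — handled by discarding the excess decay of the higher-exponent factor via $(1+\abs{k-j})^{-\beta_2}\le(1+\abs{k-j})^{-\beta_1}$ — and (ii) noting that the geometric series $\sum_{m\in\zn}(1+\abs m)^{-\beta}$ are bounded independently of $N$ because the circular distance is comparable to a summable weight for $\beta>1$; neither is a real difficulty.
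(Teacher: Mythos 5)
Your proof is correct and follows essentially the same route as the paper: a pointwise bound on $\abs{(AB)_{ij}}$ combined with the dyadic observation that one of $\abs{i-k}$, $\abs{k-j}$ must be at least $\abs{i-j}/2$, plus summability of $(1+\abs{m})^{-\beta}$ for $\beta>1$. The only cosmetic difference is that the paper first reduces to $\beta_1=\beta_2$ via the monotonicity $\norm{\cdot}_{\min\{\beta_1,\beta_2\}}\leq\norm{\cdot}_{\beta_i}$, whereas you carry both exponents through and discard the excess decay where needed; both yield a constant depending only on $\min\{\beta_1,\beta_2\}$.
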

\begin{proof}
Note that by definition, $\norm{A}_{\min\{\beta_1,\beta_2\}}\norm{B}_{\min\{\beta_1,\beta_2\}}\leq \norm{A}_{\beta_1}\norm{B}_{\beta_2}$, so it is sufficient to prove the case where $\beta_1=\beta_2=\beta$. Without loss of generality assume $\norm{A}_\beta=\norm{B}_\beta=1$, then,
\[
    \abs{(AB)_{ik}} \leq \sum_{j} \frac{1}{(1+\abs{i-j})^\beta} \frac{1}{(1+\abs{j-k})^\beta}.
\]
Since either $\abs{i-j}$ or $\abs{j-k}$ is $\geq \abs{i-k}/2$, the above quantity is bounded by
\[
     \abs{(AB)_{ik}}\leq 2 \sum_{l\in\bZ} \frac{1}{(1+\frac{\abs{i-k}}{2})^\beta} \frac{1}{(1+\abs{l})^\beta} \leq \frac{2}{(1+\frac{\abs{i-k}}{2})^\beta} \left( 1+ 2\int_{1}^{+\infty} \frac{\dd x}{x^\beta}\right),
\]
which is bounded by $2^{\beta +1} \frac{\beta+1}{\beta-1} (1+\abs{i-k})^{-\beta}$.
\end{proof}

The following argument is based off a similar argument of Jaffard \cite{Jaffard}. 
\begin{theorem}\label{Jaffard}
Let $d>\frac{3}{2}$ and assume that a matrix $A=I+B$ (finite or infinite) satisfies $\norm{B}<1$ and $\norm{A}_d <+\infty$. Then, for any $\delta>0$, there exists a polynomial dependent on $d$ and $\delta>0$ such that $\norm{A^{-1}}_{d-1/2-\delta}\leq P_{d,\delta}(\norm{A}_d,\frac{1}{1-\norm{B}})$.

If $d>1$ and there exists an $\epsilon>0$ such that $\norm{B} \le 1- \epsilon$, then  $\norm{A^{-1}}_{d-\delta} \le C(\delta,\epsilon, \norm{A}_d)$.
\end{theorem}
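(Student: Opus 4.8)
The plan is to follow Jaffard's resolvent expansion argument, but to be careful about how the decay exponent degrades at each step. Write $A^{-1} = \sum_{k\geq 0} (-B)^k$, which converges in operator norm since $\norm{B}<1$. The decay norm $\norm{\cdot}_\beta$ is submultiplicative up to constants (Lemma \ref{ProdofDecay}), but iterating it naively loses a constant $C_\beta$ at every step, and $\sum_k C_\beta^k$ diverges. The standard fix is a dyadic/square-root trick: to control $\norm{B^k}_{\beta}$ one does not multiply $k$ copies of $B$ one at a time, but instead interpolates between the decay estimate $\norm{B^k}_\beta \lesssim C_\beta^{k}$ (from iterating Lemma \ref{ProdofDecay}) and the operator-norm estimate $\norm{B^k}_{l^2\to l^2}\leq \norm{B}^k$, which has no bad constant. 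Concretely, for a target exponent $\beta' = d - 1/2 - \delta$ slightly below $d$, one writes, for each entry,
\[
    \abs{(B^k)_{ij}} \leq \abs{(B^k)_{ij}}^{1-s}\,\abs{(B^k)_{ij}}^{s}
\]
and bounds the first factor using the $\norm{B^k}_d$-type estimate and the second using decay of $\norm{B^k}$ in a weaker sense; more cleanly, one splits $B^k = B^{\lceil k/2\rceil} B^{\lfloor k/2 \rfloor}$ and uses that one of the two index-gaps $\abs{i-m}$, $\abs{m-j}$ exceeds $\abs{i-j}/2$, trading a factor of $\abs{i-j}^{-1/2}$ for a summable $\ell^2$ bound $\sum_m \abs{(B^{\lceil k/2\rceil})_{im}}^2 \lesssim \norm{B}^{k}\cdot(\text{const})$ coming from the Hilbert–Schmidt-type control that $\sum_m \abs{(B^{\lceil k/2\rceil})_{im}}\,\abs{(B^{\lceil k/2\rceil})_{mj}}$ admits. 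Carrying the exponent $1/2$ loss only once, rather than once per multiplication, is exactly what produces the exponent $d - 1/2 - \delta$ rather than $d-1$ or worse, and the $\delta$ absorbs the logarithmic/polynomial factors in $k$ that accumulate from summing the series.

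For the first assertion I would therefore: (i) record that $\norm{B}_d \leq \norm{A}_d + 1 \lesssim \norm{A}_d$; (ii) prove by induction, using Lemma \ref{ProdofDecay}, that $\norm{B^{2^m}}_d \leq (C_d)^{2^m}\norm{B}_d^{2^m}$ is the wrong bound and instead establish the refined estimate $\norm{B^k}_{d} \leq Q(k)\,\rho^{k}$ for a polynomial $Q$ and some $\rho<1$ whenever $\norm{B}<1$, by interpolating the decay norm against the operator norm at dyadic scales; (iii) sum $\sum_k \norm{B^k}_{d-1/2-\delta} \leq \sum_k Q_\delta(k)\,\rho^k < \infty$, which converges and is bounded by a polynomial in $\norm{A}_d$ and $(1-\norm{B})^{-1}$ because $\rho$ and the number of "effective" terms both depend polynomially on $(1-\norm{B})^{-1}$. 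The statement $d>3/2$ is the threshold that keeps $d-1/2-\delta>1$, so that Lemma \ref{ProdofDecay} and Lemma \ref{lem:decaymatrix} remain applicable throughout.

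The second assertion — that a \emph{uniform} gap $\norm{B}\leq 1-\epsilon$ upgrades the conclusion to the full exponent $d-\delta$ with a bound depending only on $\delta,\epsilon,\norm{A}_d$ — follows by the same scheme but with an honest geometric decay $\rho = 1-\epsilon$ in hand from the start, so one does not need to spend the extra $1/2$ in the exponent buying quantitative control of $\norm{B}^k$; the $\epsilon$-gap already gives $\norm{B^k} \leq (1-\epsilon)^k$ with explicit constants, and the polynomial prefactors are controlled in terms of $\epsilon$ alone. I expect the main obstacle to be the bookkeeping in step (ii): making precise the interpolation between "$\ell^\infty$-with-polynomial-weight" decay and "$\ell^2\to\ell^2$" boundedness so that the per-multiplication constant $C_d$ does not compound, and verifying that the resulting prefactor is genuinely polynomial (not merely finite) in the two quantities $\norm{A}_d$ and $(1-\norm{B})^{-1}$ — this is where Jaffard's original argument must be followed most carefully and adapted to the continuous-index / $\bT\times\bZ$ setting relevant later in the paper.
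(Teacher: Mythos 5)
Your proposal correctly identifies the shape of the problem --- expand $A^{-1}=\sum_k B^k$, observe that iterating Lemma \ref{ProdofDecay} compounds a constant $C_d$ per factor, and look for an interpolation between the weighted-$\ell^\infty$ decay and the $\ell^2\to\ell^2$ operator norm that pays the $1/2$ in the exponent only once. But neither of the two concrete mechanisms you offer actually closes the argument. The entrywise H\"older interpolation $\abs{(B^k)_{ij}}\le \abs{(B^k)_{ij}}^{1-s}\abs{(B^k)_{ij}}^{s}$, with the decay bound $(C_d\norm{B}_d)^k(1+\abs{i-j})^{-d}$ on one factor and $\norm{B}^k$ on the other, forces $s$ close to $1$ so that $(C_d\norm{B}_d)^{1-s}\norm{B}^{s}<1$, and the surviving decay exponent is then $d(1-s)$, which degenerates as $\norm{B}_d$ grows or $\norm{B}\to1$; it does not produce $d-1/2-\delta$ with polynomial constants. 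The half-splitting $B^k=B^{\lceil k/2\rceil}B^{\lfloor k/2\rfloor}$ is circular as stated: to extract the factor $\abs{i-j}^{-1/2}$ you need weighted $\ell^2$ control of the rows of $B^{\lceil k/2\rceil}$, i.e.\ decay information on high powers of $B$, which is exactly what is being proved. One can turn this into a recursion on $\norm{B^k}_\alpha$, but closing that recursion is the delicate point (the Remark in the appendix shows it only closes for $\alpha<d-1/2$ and requires the interpolation \eqref{InterpolateJaffard}); your sketch does not set it up.

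The paper's proof instead uses Jaffard's weight-distribution identity: for a fixed target entry $(j,k)$ it truncates $B$ at range $\abs{j-k}/n$ into a short-range part $\tilde B$ and a long-range part $\hat B$ carrying the weight $\abs{j-k}/n$, and writes $\abs{j-k}[B^n]_{jk}$ as a sum of $n$ terms $(\tilde B)^i\hat B B^{n-i-1}$, so that exactly one factor in each product carries the decay (estimated via Young and Cauchy--Schwarz in Lemma \ref{MainTermJaffard}, which is where the $1/2$ is lost) while every other factor is controlled in pure operator norm; the truncation guarantees $\norm{\tilde B}\le(1+\norm{B})/2$ once $\abs{j-k}\gtrsim n\norm{B}_{d}/(1-\norm{B})$, and this is what prevents the compounding of $C_d$. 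That device is absent from your proposal. Finally, your explanation of the second assertion is off: $\norm{B}<1$ already gives $\norm{B^k}\le\norm{B}^k$ geometrically in both cases, so the upgrade to exponent $d-\delta$ under a quantitative gap $\norm{B}\le1-\epsilon$ does not come from ``honest geometric decay''; it comes from being able to run the recursive estimate at any $\alpha<d$ at the price of a worse geometric base $r>\norm{B}$ and a constant that is no longer polynomial in the data.
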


We will show matrix element decay of the solution to the self-consistent equation. Though we will only really apply this to the solution of the limiting equation \eqref{sceinfty1}, the following theorem will phrase the result in terms of Matrices for convenience of notation.

\begin{proposition}\label{DecayofSolution}
    Let $M$ be the solution to the following equation
    \begin{gather*}
        M(-z -S(M))=I.
    \end{gather*}
    If there exists a constant $c>0$ such that $\norm{M}, \norm{M^{-1}} \leq c$, then we have that $\norm{M}_{\alpha} \leq C(c,\alpha)$.
\end{proposition}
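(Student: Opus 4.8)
The plan is to bootstrap the decay of $M$ from the equation $M = (-z - S(M))^{-1}$ using the smoothing property of $S$ recorded in Lemma~\ref{lem:S}, together with the product-of-decay estimate (Lemma~\ref{ProdofDecay}) and the Jaffard-type inverse-decay theorem (Theorem~\ref{Jaffard}). First I would rewrite the equation in the form $(-z - S(M)) M = I$, i.e. $-zM - S(M)M = I$. The key structural observation is that $S$ \emph{gains} decay: by Lemma~\ref{lem:S}(2), $\norm{S(M)}_{l^p\to l^p}\lesssim \abs{M}_\infty\leq \norm{M}\leq c$, so $S(M)$ is a bounded operator, and once $M$ has \emph{some} polynomial decay, $S(M)$ has \emph{more} — namely $\norm{S(M)}_{d-1}\lesssim \abs M_\infty$ from Lemma~\ref{lem:S}(1) unconditionally, and $\norm{S(M)}_d\lesssim\norm M_{d-1}$ from Lemma~\ref{lem:S}(3) once $\norm M_{d-1}$ is finite.

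The iteration then goes as follows. Write $A := -z - S(M) = -z I - S(M)$. Since $\norm{S(M)}_{d-1}\lesssim \abs M_\infty\lesssim 1$ (Lemma~\ref{lem:S}(1)), $A$ has the form (diagonal constant) plus a matrix with $\norm{\cdot}_{d-1}$ finite; after rescaling by a constant depending on $z$ (allowed since we work on a bounded domain of $\bC^+$ and $\norm{M^{-1}}=\norm A\le c$, so $|z|$ and $\abs{z}^{-1}$ stay bounded in the relevant regime — alternatively one absorbs the scalar $z$ harmlessly), one checks $A/(-z)$ satisfies the hypotheses of Theorem~\ref{Jaffard} with exponent $d-1>1/2$: it is $I + B$ with $B = S(M)/z$, $\norm B$ bounded by a constant that can be made $\le 1-\epsilon$ by the quantitative control $\norm{M^{-1}}\le c$ (this is exactly where the lower bound on $M$ enters — it guarantees $A$ is boundedly invertible, hence $B$ cannot be too close to $-I$ in operator norm along the line of the argument; if $\norm B$ is not literally small one instead applies Theorem~\ref{Jaffard} to a suitable power / uses the second clause of that theorem which only needs $\norm B\le 1-\epsilon$). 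Theorem~\ref{Jaffard} then yields $\norm{A^{-1}}_{d-1-\delta}\le C(c,\alpha)$ for any $\delta>0$; choosing $\delta$ so that $d-1-\delta \ge \alpha$ — possible since $\alpha<d$, and if $d-1<\alpha$ we simply do not gain enough in one step and must iterate — gives $\norm M_{\alpha}=\norm{A^{-1}}_\alpha\le C(c,\alpha)$.

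If a single application of Theorem~\ref{Jaffard} does not reach exponent $\alpha$ (when $\alpha > d-1$), I would iterate: having shown $\norm M_{\beta}<\infty$ for some $\beta\in(1,d-1]$, feed this back through $S$. By Lemma~\ref{lem:S}(3), $\norm{S(M)}_{\min\{d,\beta+1\}}\lesssim \norm M_{\beta}$ (more precisely $\norm{S(M)}_d\lesssim\norm M_{d-1}$ once $\beta\ge d-1$, and in general $S$ improves the exponent by one up to the cap $d$), so $A = -z - S(M)$ now has $\norm{\cdot - (-z)I}$ controlled in a stronger norm; re-applying Theorem~\ref{Jaffard} (with the better exponent, using Lemma~\ref{ProdofDecay} to multiply decay estimates when composing) improves the decay of $A^{-1}=M$ by one power each round, and after finitely many rounds — at most $\lceil \alpha\rceil$ — one reaches $\norm M_\alpha\le C(c,\alpha)$. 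The quantitative constant stays under control at each step because every estimate invoked (Lemmas~\ref{lem:S}, \ref{ProdofDecay}, \ref{lem:decaymatrix}, Theorem~\ref{Jaffard}) is quantitative in $c$, $\alpha$, $d$ only.

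The main obstacle I anticipate is the bookkeeping around Theorem~\ref{Jaffard}'s hypothesis $\norm B<1$ (or $\le 1-\epsilon$): the scalar $z$ is not near a specific value, so $A=-z-S(M)$ is not literally a perturbation of the identity, and one must either normalize carefully (dividing by a $z$-dependent scalar, keeping track of how constants degrade as $\Im z\to 0$ — but here $z$ ranges only over a bounded set and $\norm{M^{-1}}\le c$ is assumed, so $A$ is uniformly bounded and invertible, which should suffice) or reorganize $A$ as $A = D(I + D^{-1}(A-D))$ for a convenient invertible reference $D$ with known decay and apply the theorem to the factor in parentheses. Verifying that the operator-norm smallness needed to run Jaffard actually follows from $\norm M,\norm{M^{-1}}\le c$ — rather than from a near-identity assumption — is the one genuinely non-mechanical point; everything else is a finite decay-bootstrap built from the lemmas already proved.
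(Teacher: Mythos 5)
Your overall architecture (decay of $S(M)$ from Lemma \ref{lem:S}, an application of Theorem \ref{Jaffard} to invert, then a bootstrap to push the exponent up to $\alpha$) matches the paper's, but the step you yourself flag as ``the one genuinely non-mechanical point'' is exactly where the argument breaks, and you do not supply the idea that fixes it.

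The gap: you propose to apply Theorem \ref{Jaffard} to $A/(-z)=I+B$ with $B=S(M)/z$, arguing that $\norm{M^{-1}}\le c$ forces $\norm{B}\le 1-\epsilon$. Bounded invertibility of $I+B$ does \emph{not} imply $\norm{B}<1$ for a non-normal (or even a Hermitian indefinite) matrix; and concretely, for $z$ near the centre of the spectrum one has $|z|$ small while $\norm{S(M)}\sim 1$, so $\norm{S(M)/z}$ can be arbitrarily large. One can instead look for some other scalar $\mu$ with $\norm{I-A/\mu}<1$ by exploiting $\Im(-z-S(M))\le -\eta$, but then $1-\norm{B}\sim \eta$, and the constant produced by Theorem \ref{Jaffard} (which is polynomial in $(1-\norm{B})^{-1}$) blows up as $\Im z\to 0$ --- precisely the regime in which the proposition is used. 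Your fallback ``$A=D(I+D^{-1}(A-D))$ for a convenient $D$'' is the right instinct but is left unspecified, and no diagonal or scalar $D$ works uniformly.

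The paper's resolution is a symmetrization you are missing: write $M=(M^{-1})^{*}\bigl(M^{-1}(M^{-1})^{*}\bigr)^{-1}$ and apply Theorem \ref{Jaffard} not to $-z-S(M)$ but to the \emph{positive definite Hermitian} matrix $M^{-1}(M^{-1})^{*}$. Its decay of order $d-\tfrac12$ follows from Lemma \ref{lem:S}(4) and Lemma \ref{ProdofDecay}, and --- crucially --- its spectrum lies in $[c^{-2},c^{2}]$ because $\norm{M},\norm{M^{-1}}\le c$, so writing it as $\tfrac{\lambda_1+\lambda_n}{2}(I+B)$ gives $\norm{B}=\tfrac{\lambda_1-\lambda_n}{\lambda_1+\lambda_n}\le\tfrac{c^{2}-c^{-2}}{c^{2}+c^{-2}}<1$ with a gap depending only on $c$, uniformly in $z$. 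After that, Lemma \ref{ProdofDecay} transfers the decay to $M$ and the bootstrap through $S$ proceeds as you describe. Without this (or an equivalent) device, your proof does not close.
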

\begin{proof}
Notice that we are able to write
\begin{gather*}\label{ProdForm}
M=(M^{-1})^*((M^{-1})^*M^{-1})^{-1}.
\end{gather*}
By the equation of $M$, we have $M^{-1}=-z-S(M)$.  Let us first estimate the decay of $M^{-1}$. By Lemma \ref{lem:S} we have $\norm{M^{-1}}_{d-\inv{2}}\lesssim \norm{M}$. By Lemma \ref{lem:decaymatrix} we have $\norm{M^{-1}(M^{-1})^*}_{d-\inv{2}}\lesssim \norm{M}^2$. We would now like to apply theorem \ref{Jaffard}  to $(M^{-1}(M^{-1})^*)^{-1}$. 

For any general positive semi-definite matrix, $A$, we will be able to write it as $A = \frac{\lambda_1 + \lambda_n}{2}[I + B]$ where $\lambda_1$ and $\lambda_n$ are respectively the largest and smallest eigenvalues of A.

 Theorem \ref{Jaffard} is applied  to the matrix $I+B$. The operator norm bound on $B$ will be $\frac{\lambda_1 -\lambda_n}{\lambda_1 + \lambda_n}$. 
The important factor $r= 1- \|B\|$ will be $\frac{2 \lambda_n}{\lambda_1 + \lambda_n}$.  \ref{Jaffard}  now shows that the matrix decay of $A^{-1}$ will be the same matrix decay of $A^{-1}$.

Applying this logic to the positive semidefinite matrix $(-z-S(M))(-z-S(M))^*$, one will obtain that $\lambda_1$ and $\lambda_n$ are both of some bounded constant order. Thus, we see we have matrix decay of order $1$.

Finally applying the multiplication lemma \eqref{ProdofDecay} to \eqref{ProdForm}, we will be able to get a matrix decay of $M$ of order $1$. We use this decay of $M$ to argue that $-z-S(M)$ has a matrix decay of order $\alpha$. We can then apply the same logic as above to argue that $M$ has matrix decay of order $\alpha$.
\end{proof}

\begin{remark}
The solution to the limiting self-consistent equation, though ostensibly a vector, can be written as an infinite Toeplitz matrix and the above result can be applied.
\end{remark}

Now we define $J:\cnn\to \cK$, such that for any $A\in \cnn$ and $i\in\zn$, $u\in\bT$,
\begin{equation}\label{def:J}
    J(A)(i/N,u):= \sum_{k=i-\floor{N/2}}^{i+\floor{N/2}} A_{i,i+k}\re^{2\pi\ri ku}.
\end{equation}
and $J(A)(s,u)$ is linear in $s$ for $s\in[i/N,(i+1)/N]$. It is easy to check that 
\begin{equation}\label{DJ}
    D(J(A)) = A,\quad \forall A\in\bC^{N\times N}.
\end{equation}

\begin{proposition}\label{prop:iterate}
    Consider a fixed bounded subset $U\subset \bC^+$. There are constants $\epsilon, C>0$ such that if $\abs{J(M)-g}_\infty\leq \epsilon$ , then $\norm{F(M)-F(D(g))}_{\alpha-1}\vee \norm{F(F(M))-F(F(D(g)))}_\alpha \leq C\abs{J(M)-g}_\infty$ and $\abs{F(M)-D(g)}_\infty\vee \abs{F(F(M))-D(g)}_\infty\leq C(\abs{J(M)-g}_\infty+N^{-\inv{2}}),\forall z\in U$.
\end{proposition}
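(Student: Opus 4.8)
The plan is to treat $F$ and $F\circ F$ as smooth maps near the point $D(g)$ and to combine a Lipschitz (first-order) estimate in the strong $\norm{\cdot}_\beta$ norms with the entrywise comparison already available from Corollary~\ref{col:Nrmbndit}. First I would set $A := M$ and $B := D(g)$, and observe that by hypothesis $\abs{J(M)-g}_\infty \leq \epsilon$ together with $D(J(M)) = M$ (see \eqref{DJ}) gives $\abs{M - D(g)}_\infty = \abs{D(J(M)-g)}_\infty \leq \abs{J(M)-g}_\infty \leq \epsilon$; so $M$ is entrywise close to $D(g)$. Writing $F(M) = (-z - S(M))^{-1}$, the resolvent identity gives
\begin{equation}\nonumber
F(M) - F(D(g)) = F(M)\,\bigl(S(M) - S(D(g))\bigr)\,F(D(g)) = F(M)\,S(M-D(g))\,F(D(g)),
\end{equation}
using linearity of $S$. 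Now $S(M-D(g))$ has power-law decay: by Lemma~\ref{lem:S}(1), $\norm{S(M-D(g))}_{d-1} \lesssim \abs{M-D(g)}_\infty \leq \abs{J(M)-g}_\infty$. To convert this into a bound on the product I need $F(M)$ and $F(D(g))$ to have decay and bounded operator norm; $F(D(g))$ is handled by Corollary~\ref{col:Nrmbndit} (it is within $N^{-1/2}$ of $D(g)$ in operator norm, whose singular spectrum is compact, and $\norm{D(g)}_\alpha \lesssim 1$ by Proposition~\ref{DecayofSolution} applied to the Toeplitz form of $g$, cf.\ the remark). For $F(M)$ one first needs that $-z-S(M)$ is invertible with bounded inverse: since $\abs{S(M) - S(D(g))}_\infty$ (equivalently the operator norm, via Lemma~\ref{lem:decaymatrix} and Lemma~\ref{lem:S}(2)) is $\lesssim \epsilon$, for $\epsilon$ small $-z-S(M)$ is a small perturbation of $-z-S(D(g))$, hence invertible with $\norm{F(M)} \lesssim 1$; and then $\norm{F(M)}_{\alpha-1} \lesssim 1$ follows by the same Jaffard-type argument (Theorem~\ref{Jaffard}, Proposition~\ref{DecayofSolution}) or directly by a Neumann series in $\norm{\cdot}_{\alpha-1}$ using Lemma~\ref{ProdofDecay}. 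Then Lemma~\ref{ProdofDecay} gives $\norm{F(M) - F(D(g))}_{\alpha-1} \lesssim \norm{F(M)}_{\alpha-1}\,\norm{S(M-D(g))}_{d-1}\,\norm{F(D(g))}_{\alpha-1} \lesssim \abs{J(M)-g}_\infty$ (here using $d-1 > \alpha - 1$ so the $\min$ in Lemma~\ref{ProdofDecay} is $\alpha-1$).

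For the second iterate I would write, again by the resolvent identity,
\begin{equation}\nonumber
F(F(M)) - F(F(D(g))) = F(F(M))\,S\bigl(F(M) - F(D(g))\bigr)\,F(F(D(g))).
\end{equation}
By Lemma~\ref{lem:S}(3), $\norm{S(F(M)-F(D(g)))}_d \lesssim \norm{F(M)-F(D(g))}_{d-1}$; but I only proved a bound in $\norm{\cdot}_{\alpha-1}$, so I would instead use Lemma~\ref{lem:S} in the form $\norm{S(F(M)-F(D(g)))}_{\alpha} \lesssim \norm{F(M)-F(D(g))}_{\alpha-1} \lesssim \abs{J(M)-g}_\infty$ (this is the smoothing gain: $S$ raises the decay exponent by one). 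Then, exactly as above, $F(F(M))$ has bounded operator norm and bounded $\norm{\cdot}_\alpha$ norm (perturbation of $F(F(D(g)))$, which is controlled by Corollary~\ref{col:Nrmbndit}), and $F(F(D(g)))$ likewise, so Lemma~\ref{ProdofDecay} yields $\norm{F(F(M)) - F(F(D(g)))}_\alpha \lesssim \abs{J(M)-g}_\infty$.

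Finally, for the entrywise statements: $\abs{F(M) - D(g)}_\infty \leq \abs{F(M) - F(D(g))}_\infty + \abs{F(D(g)) - D(g)}_\infty$. The first term is $\leq \norm{F(M)-F(D(g))}_{\alpha-1} \lesssim \abs{J(M)-g}_\infty$, and the second is $\lesssim N^{-1/2}$ by Corollary~\ref{col:Nrmbndit}, giving $\abs{F(M)-D(g)}_\infty \lesssim \abs{J(M)-g}_\infty + N^{-1/2}$; the bound for $\abs{F(F(M)) - D(g)}_\infty$ is identical, splitting through $F(F(D(g)))$ and using $\norm{F(F(D(g))) - D(g)} \lesssim N^{-1/2}$ from Corollary~\ref{col:Nrmbndit}. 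The main obstacle I anticipate is the bookkeeping around invertibility and uniformity: one must check that for $\epsilon$ small enough (depending only on the universal constants and the fixed bounded set $U$, in particular on how close $z$ can come to $\supp\nu$ — though here $U$ is an arbitrary fixed bounded subset of $\bC^+$, so $\Im z$ is bounded below on $U$ and this is not an issue), the perturbed operators $-z - S(M)$ and $-z - S(F(M))$ stay invertible with uniformly bounded inverses, and that the decay-norm bounds on $F(M)$, $F(F(M))$ are uniform over $M$ in the $\epsilon$-ball and over $z \in U$; all the needed inputs are Lemmas~\ref{lem:decaymatrix}, \ref{lem:S}, \ref{ProdofDecay} and Theorem~\ref{Jaffard}, but assembling them with the right norm exponents (keeping $\alpha < d$, $\alpha - 1 > 1$) requires care.
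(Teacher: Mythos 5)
Your proposal is correct and follows essentially the same route as the paper: perturb $F$ around $D(g)$, bound $S(M-D(g))$ in a decay norm via Lemma \ref{lem:S}, control the products with Lemma \ref{ProdofDecay}, exploit the one-order smoothing gain of $S$ on the second iterate, and finish the entrywise bounds with Corollary \ref{col:Nrmbndit}. The only (cosmetic) difference is that you use a single resolvent-identity term plus separate a priori bounds on $F(M)$, whereas the paper expands the full Neumann series $F(M)-F(D(g))=\sum_{k\geq 1}A(RA)^k$, which establishes invertibility and the estimate simultaneously.
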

\begin{proof} Let $A:=F(D(g))$ and $R:=S(M-D(g))$. Then 
\[
    F(M)-A= \sum_{k=1}^\infty A(RA)^k.
\]
Hence $\norm{F(M)-A}_{\alpha-1}\leq \sum_{k=1}^\infty \norm{A(RA)^k}_{\alpha-1}$. It is easy to see that $\norm{R}_{\alpha-1}\leq  c\abs{J(M)-g}_\infty $ for some universal constant $c>0$. By Lemma  \ref{ProdofDecay} we have $\norm{A(RA)^k}_{\alpha-1}\lesssim (c\abs{J(M)-g}_\infty)^k$. Therefore, taking $\epsilon$ small enough, we have $\norm{F(M)-F(D(g))}_{\alpha-1} \leq C\abs{J(M)-g}_\infty,$.

Next, we define $R'=S(F(M)-F(D(g)))$, $A' = F(F(D(g)))$. Then $\norm{R'}_\alpha \leq c'\epsilon$ according to the above argument. We have
\[
    F(F(M))-A' = \sum_{k=1}^\infty A'(R'A')^k.
\]
By Lemma  \ref{ProdofDecay} we have $\norm{A'(R'A')^k}_\alpha \lesssim (c\abs{J(M)-g}_\infty)^k$. Therefore, taking $\epsilon$ small enough, we have $\norm{F(F(M))-F(F(D(g)))}_\alpha \leq C\abs{J(M)-g}_\infty$.

The last claim follows from the estimates above and Corollary \ref{col:Nrmbndit}.
\end{proof}

\subsection{Local Law}


Recall definition \eqref{def:kappa} and \eqref{def:gamma}, for a constant $T>0$ to be chosen, define 
\begin{equation}
    \mathcal{D} : = \{z\in \bC^+\vert \abs{z}\leq T, \Im z \geq (\log N)^{10}N^{-1} \omega^{-4}\}.
\end{equation}
\begin{theorem}[Local law]\label{thm:bulklaw}
Define $\Lambda(z):=\abs{D(g)-G}_\infty$. For $N$ large enough, we have
\[
    \sup_{z\in \mathcal{D}} \Lambda(z) \leq (\log N)^4 \left(\sqrt{\frac{\gamma}{N\eta}}\right)\omega^{-1},
\]
with probability $1- \re^{-\mathsf{a}_3(\log N)^2}$. If $\kappa>\rho$,
\[
    \sup_{z\in \mathcal{D}^\theta_N}\Lambda(z) \leq (\log N)^8 \left(\sqrt{\frac{\Im m}{N\eta}}\omega^{-1}+(N\eta)^{-1}\omega^{-2}\right) ,
\]
with probability $1- \re^{-\mathsf{a}_3(\log N)^2}$.
\end{theorem}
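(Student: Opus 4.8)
The plan is to run the standard continuity/bootstrap argument in the spectral parameter $\eta = \Im z$, using the self-consistent equation with small error from Section \ref{sec:errors} together with the stability Lemma \ref{lem:stab}, but with the crucial twist that the entrywise error coming from Lemma \ref{lem:ibp} is too weak to feed directly into stability — so we first pass through the double iterate $F(F(G))$ as set up in Proposition \ref{prop:iterate}. Concretely, fix $z \in \mathcal D$ and work on the high-probability event $\Xi$ where $\Gamma \le K$ for a suitable $K = (\log N)^{C}$; on $\Xi$ Lemma \ref{lem:ibp} gives $-GS(G) - Gz = I + R$ with $\abs{R}_\infty \lesssim (\log N)^2 \sqrt{K^4\gamma/(N\eta)}$. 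Rewriting this as $G = F(G) + \mathcal E_1$ with $\abs{\mathcal E_1}_\infty$ of the same order, and then iterating, Corollary \ref{col:DisSV1}-style algebra (as in Corollary \ref{col:Nrmbndit}) shows $F(F(G)) = F(F(F(G))) + R'$ where $R'$ now has power-law off-diagonal decay, hence small \emph{operator} norm by Lemma \ref{lem:decaymatrix}. This is the step where the smoothing effect of $S$ is exploited.

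Next I would set up the bootstrap. Assume as an a priori (induction) hypothesis that $\Lambda(z) = \abs{D(g) - G}_\infty \le (\log N)^{-1}(\kappa^{2/3} + \rho)$ holds — this is available at large $\eta$ (say $\eta \sim T$) by a trivial norm bound, and we propagate it down to the bottom of $\mathcal D$ by a grid argument in $z$ using Lipschitz continuity of both $G$ and $g$ in $z$ (the grid has polynomially many points, absorbed into the $(\log N)$ factors). Under this hypothesis, $\abs{J(G) - g}_\infty$ is small, so Proposition \ref{prop:iterate} applies and controls $F(F(G)) - F(F(D(g)))$ and hence, combining with $F(F(G)) = F(F(F(G))) + R'$ and $D(g)$ being (up to $N^{-1/2}$) a fixed point of $F\circ F$ by Corollary \ref{col:Nrmbndit}, we recover an improved equation of the schematic form $J(G)(-\Psi(J(G)) - z) = 1 + r$ with $\norm{r}_\infty$ bounded by the genuinely small quantity $\abs{\mathcal E_1}_\infty + N^{-1/2} \lesssim (\log N)^{C}\sqrt{\gamma/(N\eta)} + N^{-1/2}$, which on $\mathcal D$ (where $\eta \ge (\log N)^{10}N^{-1}\omega^{-4}$) is $\le \mathsf c_6(\kappa^{2/3}+\rho)$. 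Then Lemma \ref{lem:stab} upgrades this to $\norm{J(G) - g}_\infty \lesssim \omega^{-1}\norm{r}_\infty$, i.e. $\Lambda \lesssim (\log N)^{C}\sqrt{\gamma/(N\eta)}\,\omega^{-1}$. Since $\gamma = \max_i \Im G_{ii} \vee \eta \le \Im m + \Lambda$ and $\Im m \lesssim \rho + \sqrt\kappa$, a final self-improving step (solving the resulting quadratic inequality in $\Lambda$, exactly as in the standard Wigner-type local law) converts the $\sqrt{\gamma/(N\eta)}$ into the two stated bounds — the weaker $\sqrt{\gamma/(N\eta)}\,\omega^{-1}$ in general, and the sharper $\sqrt{\Im m/(N\eta)}\,\omega^{-1} + (N\eta)^{-1}\omega^{-2}$ when $\kappa > \rho$ (i.e. outside/near the edge where $\Im m$ is genuinely small). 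The probability bound $1 - e^{-\mathsf a_3(\log N)^2}$ comes from the Gaussian concentration in Lemma \ref{lem:ibp} (with $t \sim \log N$) combined with a union bound over the polynomial $z$-grid and over $N^2$ matrix entries.

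The main obstacle I anticipate is making the passage through $F\circ F$ fully rigorous as a closed bootstrap: one must verify that the a priori hypothesis $\abs{J(G)-g}_\infty \le \epsilon$ needed to invoke Proposition \ref{prop:iterate} is exactly what the continuity argument provides at each scale, that the operator-norm smallness of $R'$ is not destroyed when we invert to go from $F(F(G))$ back to $G$ (this uses $\norm{G} , \norm{D(g)^{-1}} = O(1)$, which itself must be part of the bootstrapped control — via Corollary \ref{col:DisSV} for $D(g)$ and via the local law itself for $G$, so there is a mild circularity to untangle by ordering the induction correctly), and that the error $r$ in the limiting equation genuinely inherits the $\sqrt{\gamma/(N\eta)}$ rate rather than a worse power of the off-diagonal decay. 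A secondary technical point is the second statement's restriction to $\mathcal D^\theta_N$ with $\kappa > \rho$: there the stability constant degenerates like $\omega^{-1}$ but $\Im m \ll 1$, so one must be careful that the quadratic self-improvement is run with $\Im m$ in place of a generic $\gamma$, which forces the extra $(N\eta)^{-1}\omega^{-2}$ term. None of these are conceptually new — they are the by-now-standard bookkeeping of the cusp/edge local law — but the combination of the $F\circ F$ trick with the edge-adapted stability is where the care is needed.
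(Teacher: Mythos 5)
Your proposal follows essentially the same route as the paper's proof: a continuity/dichotomy argument over a polynomial grid in $z$, feeding the entrywise error from Lemma \ref{lem:ibp} through the double iterate $F\circ F$ (Proposition \ref{prop:iterate}) so that the stability Lemma \ref{lem:stab} can be applied to $J(F(F(G)))$, then closing via Gaussian concentration and a union bound; the paper formalizes your bootstrap as the intermediate-regime event $\Omega$ on which stability forces a contradiction with the lower threshold. Your treatment of the second bound (the quadratic self-improvement replacing $\gamma$ by $\Im m$ when $\kappa>\rho$) is in fact spelled out more explicitly than in the paper's own proof, which only sketches that case.
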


\begin{proof}
Take $K:=\log N$, and let $\{z_k\}$ be an $N^{-4}$-net of $\mathcal{D}$. Define 
\[
    \Omega:= \bigcup_{k=1}^{N^{10}}\{ \Lambda(z)\in (K^4\sqrt{\gamma}(N\eta)^{-\inv{2}}\omega^{-1}, K^{-1}\omega\}
\]
Then by Proposition \ref{prop:iterate}, on $\Omega$ we have 
\[
    F(F(G))(-S(F(F(G))-z) = I + \tilde{R},
\]
where $\tilde{R}\lesssim \abs{R}_\infty$. Then $J(F(F(G))(-\Psi(J(F(F(G))))-z) = 1 + O(\abs{R}_\infty+N^{-1})$. By Proposition \ref{prop:iterate}, on $\Omega$ we have $\abs{F(F(G))-D(g)}_\infty\lesssim K^{-1} \omega  + N^{-\inv{2}}$, which is $\ll \omega$. By stability Lemma \ref{lem:stab} we know $\Lambda(z) \lesssim (\abs{R}_\infty+N^{-1})\omega^{-1} $, which implies $\norm{J(F(F(G)))-g}_\infty\lesssim (\abs{R}_\infty+N^{-1})\omega^{-1}$, hence $\abs{G-D(g)}_\infty \lesssim (\abs{R}_\infty+N^{-1})\omega^{-1}$. Therefore, on $\Omega$ we have $\abs{R}_\infty \gtrsim K^4\sqrt{\gamma}(N\eta)^{-\frac{1}{2}}$. By Lemma \ref{lem:ibp} we know $\PP{\Omega}\leq 2N^{12}\re^{-c(\log N)^2}$. On $\Omega^c$, we either have $\inf_{z\in\mathcal{D}}\abs{G-D(g)}_\infty\geq K^{-1} \omega /2$ or $\sup_{z\in\mathcal{D}}\abs{G-D(g)}\leq 2K^4\sqrt{\gamma}(N\eta)^{-\inv{2}}$. The latter is true with probability $1- \re^{-c(\log N)^2}$, since if we take the $T$ in the definition of $\mathcal{D}$ to be a large enough constant, then the former case holds with $O(\re^{-c(\log N)^2})$ probability.
\end{proof}


\begin{corollary} \label{cor:Averaged Error}
Let $a>0$ be a small constant. Then on 
\[
    \mathcal{D}' : = \{ z\in\mathcal{D}\vert \kappa\geq N^{-a}\}. 
\]
we have 
\[
    \abs{\EE{G}-D(g)}_\infty\lesssim (\log N)^{16}\left(\frac{1}{N\kappa \omega^3} + \frac{1}{(N\eta)^2 \omega^5}\right).
\]
\end{corollary}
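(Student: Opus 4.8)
The plan is to upgrade the high-probability local law of Theorem \ref{thm:bulklaw} to a bound on the averaged (expected) Green's function on the region $\mathcal{D}'$ where $\kappa \geq N^{-a}$. The key point is that the fluctuation $G - \bE_j G$ is controlled by the gradient bound \eqref{nablag}–\eqref{nablagsg} from Section \ref{sec:errors}, which carries an extra factor of $\Im m / (N\eta)$ rather than $\gamma/(N\eta)$ once we know (from the local law) that $\Im G_{ii} \approx \Im m$ with high probability; and the deterministic part of the error in the loop equation is quadratic in $\Lambda$. So the strategy is: (i) condition on the high-probability event of Theorem \ref{thm:bulklaw} (and on $\Gamma \leq K$ for $K = \log N$), on which $\Lambda \lesssim (\log N)^8\big(\sqrt{\Im m/(N\eta)}\,\omega^{-1} + (N\eta)^{-1}\omega^{-2}\big)$ since $\kappa > \rho$ on $\mathcal{D}'$; (ii) revisit the loop equation \eqref{E1} and split the error $R$ in $G(-S(G)-z) = I + R$ into a part whose conditional expectation $\bE_j[\cdot]$ vanishes by construction and a genuinely deterministic remainder of size $O(N^{-1}\Gamma^2)$ plus the self-improving quadratic term $O(\Lambda^2)$ coming from the second-order terms in the integration by parts; (iii) take full expectation, using that on the complement of the good event the contribution is $O(\re^{-\mathsf{a}_3(\log N)^2})$ times a polynomial bound in $N$ (from $\Gamma \leq 2/\eta$), hence negligible.

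More concretely, the first step is to re-derive \eqref{E1} keeping track of which terms are mean-zero: after applying Lemma \ref{lem:stein} to \eqref{r:loopequation}, the term $\sum_{k,a,b}\bE_1[G_{ia}G_{bk}]\xi_{abk1}$ produces, upon removing $\bE_1$, a fluctuation $\sum_{k,a,b}(G_{ia}G_{bk} - \bE_1[G_{ia}G_{bk}])\xi_{abk1}$ which is mean-zero under $\bE_1$ and controlled in $L^2$ by \eqref{nablagsg} via Gaussian concentration, plus higher-order integration-by-parts terms involving three or more Green's function factors; the latter are the source of the $\Lambda^2$ gain. Taking $\bE[\cdot]$ of the whole loop equation and using that $\bE\bE_j = \bE$, the mean-zero pieces drop out and we are left with $\bE[G](-S(\bE[G]) - z) = I + \bE[\text{deterministic error}] + \bE[S(G - \bE[G]) \cdot (\cdots)]$, where the last term is a covariance-type quantity bounded by $\var(G)^{1/2}$-style estimates, i.e. by $\Im m/(N\eta)$ up to $\omega$-powers and logs. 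This gives a perturbed self-consistent equation for $\bE[G]$ with error of size $(\log N)^{16}\big(\frac{1}{N\kappa\omega^3} + \frac{1}{(N\eta)^2\omega^5}\big)$ — here the $1/\omega$-weights and the fact that $\eta^{-1}\wedge \kappa^{-1}$ appears via the Ward identity $\sum_a |G_{ia}|^2 = (\Im G_{ii})/\eta$ combined with $\Im m \sim \sqrt{\kappa}\wedge \rho \lesssim \kappa/\eta$ when $\kappa \gtrsim \eta$ account for the stated exponents.

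The final step is to invert this perturbed equation using the stability Lemma \ref{lem:stab}: since the error just computed is $\ll \omega$ on $\mathcal{D}'$ (using $\kappa \geq N^{-a}$ so that $\omega \gtrsim N^{-2a/3}$ is not too small, and $\eta \geq (\log N)^{10}N^{-1}\omega^{-4}$ from the definition of $\mathcal{D}$), Lemma \ref{lem:stab} applied to $\tilde g := J(F(F(\bE[G])))$ — or more directly to a discretization of $\bE[G]$ after the $F\circ F$ smoothing of Proposition \ref{prop:iterate}, exactly as in the proof of Theorem \ref{thm:bulklaw} — yields $|\bE[G] - D(g)|_\infty \lesssim \omega^{-1} \times (\text{error})$, which is the claimed bound after collecting logarithmic factors. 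One must also check that the $F\circ F$ iteration and discretization steps of Proposition \ref{prop:iterate} go through with $G$ replaced by $\bE[G]$; this is routine since those are deterministic algebraic manipulations requiring only $|J(\bE[G]) - g|_\infty \leq \epsilon$, which follows from Theorem \ref{thm:bulklaw} on the good event plus the exponentially small bad-event contribution.

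The main obstacle I expect is bookkeeping in step (ii): correctly identifying, among the many terms generated by iterated integration by parts / cumulant expansion of the Gaussian entries, precisely which ones are mean-zero and which contribute to the deterministic error, and showing the surviving deterministic terms genuinely carry the quadratic-in-$\Lambda$ (equivalently, the extra $(N\eta)^{-1}\omega^{-2}$) gain rather than merely the first-order $\sqrt{\Im m/(N\eta)}$ size. This is where the averaged law is strictly better than the pointwise law, and getting the $\omega$-powers and the $1/(N\kappa)$ versus $1/(N\eta)$ distinction right requires careful use of the Ward identity together with the edge behavior $\Im m \sim \sqrt{\kappa + \eta}$ encoded in $\omega$.
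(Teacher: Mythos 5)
Your proposal is correct and follows essentially the same route as the paper: take the expectation of the exact Gaussian integration-by-parts identity $-\EE{GS(G)}-\EE{G}z=I$, note that the only error incurred in decoupling $\EE{GS(G)}$ into $\EE{G}S(\EE{G})$ is a covariance term of size $\EE{\abs{G-D(g)}_\infty^2}$, then rerun the stability argument of Theorem \ref{thm:bulklaw} on $\EE{G}$ to pick up the $\omega^{-1}$ factor and insert the local law together with $\Im m\lesssim \eta\kappa^{-2}$. The only small correction is that for Gaussian entries the quadratic gain comes entirely from this decoupling covariance (Stein's lemma is exact, so there are no higher-cumulant terms), not from ``higher-order integration-by-parts terms,'' but since you identify the covariance term correctly the argument stands.
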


\begin{proof}
    By integration by parts,
    \[
        -\EE{GS(G)} -\EE{G}z = I.
    \]
    Let $R= (\abs{G_{ij}-D(g)_{ij}})_{1\leq i,j\leq N}$.
\[
    -\EE{G}S(\EE{G}) - \EE{G}z = I + \EE{ (R) S(R)} = I + \EE{O(\abs{D(g)-G}_\infty^2)}.
\]
Repeating the argument in the proof of Theorem \ref{thm:bulklaw} on $\EE{G}$ instead of $G$, we have
\[
    \abs{\EE{G}-D(g)}_\infty\lesssim \EE{O(\abs{D(g)-G}_\infty^2)}\omega^{-1}.
\]
We use Theorem \ref{thm:bulklaw} and the crude bound $\Im m \leq \eta \kappa^{-2}$ to get the conclusion.
\end{proof}

\begin{remark} When we proved this local law, the only error estimates that depended strongly on the particular model we are considering are the stability results for the limiting vector equation. When considering the case of sample covariance matrices, though they are not exactly considered in the context of our proof, the stability results and the square root behavior at the right edge hold for sample covariance matrices. Thus, we will be able to prove a local law for sample covariance matrices.

\end{remark}


\subsection{Upper Bound of Top Eigenvalue}

Here we first show a lemma that combines our estimates on the average empirical spectral density with Gaussian concentration to prove upper bound for the top eigenvalue. 
\begin{lemma} \label{lem:AverageComp}
For $N\in\mathbb{N}$, consider a family of random measures $\mu_N=\frac{1}{N}\sum_{k=1}^N \delta_{\lambda_k}$ where $\lambda_1\geq \cdots\geq \lambda_N$ such that there is a deterministic $\hat{\lambda}_1$ satisfying $\lambda_1 = \hat{\lambda}_1+ o(N^{\varepsilon-\inv{2}})$ for any $\varepsilon>0$. Assume that there exists a deterministic measure $\nu$ whose Green's function satisfies
\begin{gather}\label{squareRootBehavior}
\text{Im}[m_{\nu}](x + i \eta) \le C \frac{\eta}{\sqrt{\kappa+\eta}}.
\end{gather}
where $\kappa:=\dist(\supp(\nu),x)$ and that
\begin{gather}\label{Green's function Comparison}
    |\EE{m_{\mu_N}(z)} - m_{\nu}(z)| =o(N^{-\frac{1}{2}-\gamma}).
\end{gather}
for some $\gamma>0$ and all $z= E +i\eta$ with $\dist(E,\supp(\nu)) \ge N^{-\epsilon}$ and $\eta \ge N^{-\delta-\frac{1}{2}}$ for some $\delta,\epsilon>0$.

Then, $\dist(\lambda_1,\supp(\nu))\leq N^{-\epsilon'}$ for some $\epsilon'>0$.
\end{lemma}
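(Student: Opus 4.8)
Write-up of a proof plan for Lemma~\ref{lem:AverageComp}.

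The plan is a proof by contradiction. Roughly: if $\lambda_1$—and hence its deterministic proxy $\hat\lambda_1$—sat at distance $\kappa_0:=\dist(\hat\lambda_1,\supp\nu)$ bounded below by a small negative power of $N$, then the mere presence of one eigenvalue near $\hat\lambda_1$ would force $\Im\,\EE{m_{\mu_N}}$ to be anomalously large on a thin window around $\hat\lambda_1$, which is incompatible with \eqref{Green's function Comparison} together with the square-root bound \eqref{squareRootBehavior}. Fix $\varepsilon_0:=\tfrac12\min\{\gamma,\delta,\tfrac12\}$ and $\epsilon':=\tfrac12\min\{\epsilon,\delta-\varepsilon_0,\tfrac12-\varepsilon_0\}>0$. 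Since $\lambda_1=\hat\lambda_1+o(N^{\varepsilon-1/2})$ for every $\varepsilon>0$, the event $\mathcal G:=\{|\lambda_1-\hat\lambda_1|\le N^{\varepsilon_0-1/2}\}$ has probability $\to1$, and on $\mathcal G$ we have $\dist(\lambda_1,\supp\nu)\le\kappa_0+N^{\varepsilon_0-1/2}$; so it suffices to prove $\kappa_0\le N^{-\epsilon'}$ for all large $N$ (then $\dist(\lambda_1,\supp\nu)\le N^{-\epsilon'}+N^{\varepsilon_0-1/2}\le N^{-\epsilon'/2}$ on $\mathcal G$, giving the claim with exponent $\epsilon'/2$). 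Assume for contradiction that $\kappa_0>N^{-\epsilon'}$.

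Pick the scale $\eta:=N^{-\delta-1/2}$ (the smallest allowed by \eqref{Green's function Comparison}) and the window $I:=[\hat\lambda_1-2N^{\varepsilon_0-1/2},\,\hat\lambda_1+2N^{\varepsilon_0-1/2}]$, of length $4N^{\varepsilon_0-1/2}$. The lower bound uses only the elementary identity $\int_\bR\frac{\eta}{(\lambda-x)^2+\eta^2}\,dx=\pi$: on $\mathcal G$ the interval $[\lambda_1-N^{\varepsilon_0-1/2},\lambda_1+N^{\varepsilon_0-1/2}]$ lies inside $I$, so keeping only the $k=1$ term of $\Im\,m_{\mu_N}(x+i\eta)=\tfrac1N\sum_k\tfrac{\eta}{(\lambda_k-x)^2+\eta^2}$,
\[
\int_I \Im\,m_{\mu_N}(x+i\eta)\,dx \;\ge\; \frac1N\int_I\frac{\eta}{(\lambda_1-x)^2+\eta^2}\,dx \;\ge\; \frac{2}{N}\arctan\!\big(N^{\varepsilon_0+\delta}\big) \;\ge\; \frac{\pi}{2N}
\]
for $N$ large. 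Taking expectations (the integrand is nonnegative, so Tonelli applies and $\Im$ passes through $\EE{\cdot}$) gives $\int_I \Im\,\EE{m_{\mu_N}(x+i\eta)}\,dx \ge \tfrac{\pi}{2N}\,\PP{\mathcal G} \ge \tfrac{\pi}{4N}$ for $N$ large.

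For the upper bound, note that for every $x\in I$ one has $\dist(x,\supp\nu)\ge\kappa_0-2N^{\varepsilon_0-1/2}\ge\kappa_0/2\ge N^{-\epsilon}$ (the first inequality because $\epsilon'<\tfrac12-\varepsilon_0$ makes $\kappa_0>N^{-\epsilon'}$ dominate $N^{\varepsilon_0-1/2}$, the last because $\epsilon'<\epsilon$), while $\eta\ge N^{-\delta-1/2}$; hence \eqref{Green's function Comparison} applies at $z=x+i\eta$, and with \eqref{squareRootBehavior},
\[
\Im\,\EE{m_{\mu_N}(x+i\eta)} \;\le\; \Im\,m_\nu(x+i\eta)+o\!\big(N^{-1/2-\gamma}\big) \;\le\; C\frac{\eta}{\sqrt{\kappa_0/2}}+o\!\big(N^{-1/2-\gamma}\big).
\]
Integrating over $I$ and inserting $\eta=N^{-\delta-1/2}$, $|I|=4N^{\varepsilon_0-1/2}$ yields $\int_I \Im\,\EE{m_{\mu_N}(x+i\eta)}\,dx \lesssim N^{\varepsilon_0-1-\delta}\kappa_0^{-1/2}+o(N^{\varepsilon_0-1-\gamma})$. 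Since $\varepsilon_0<\gamma$ the second term is $o(N^{-1})$, so comparing with the lower bound $\tfrac{\pi}{4N}$ forces $\kappa_0^{1/2}\lesssim N^{\varepsilon_0-\delta}$, i.e.\ $\kappa_0\lesssim N^{-2(\delta-\varepsilon_0)}$. Because $\epsilon'<2(\delta-\varepsilon_0)$, this contradicts $\kappa_0>N^{-\epsilon'}$ for $N$ large, and the argument is complete.

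The step I expect to need the most care is the choice of the window width $N^{\varepsilon_0-1/2}$. The outlier $\lambda_1$ fluctuates on scale $N^{-1/2+o(1)}$, which is strictly coarser than the finest admissible scale $\eta=N^{-\delta-1/2}$ of the comparison \eqref{Green's function Comparison}, so one cannot detect $\lambda_1$ simply by evaluating $\EE{m_{\mu_N}}$ at the single deterministic point $\hat\lambda_1$. Integrating $\Im\,\EE{m_{\mu_N}(\cdot+i\eta)}$ over a window wide enough to catch the fluctuations of $\lambda_1$—so that the lower bound needs only convergence in probability, not a variance or tail estimate—yet short enough that $\nu$, lying at distance $\kappa_0$, contributes $\ll N^{-1}$, is exactly what makes the two bounds incompatible. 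The remaining work is bookkeeping: checking that the single choice $\eta=N^{-\delta-1/2}$, $\varepsilon_0=\tfrac12\min\{\gamma,\delta,\tfrac12\}$ simultaneously respects $\eta\ge N^{-\delta-1/2}$, $\dist(x,\supp\nu)\ge N^{-\epsilon}$ on $I$, and $\varepsilon_0<\gamma$, which the definitions above arrange.
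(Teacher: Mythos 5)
Your proof is correct and follows essentially the same strategy as the paper's: a proof by contradiction that integrates $\Im\,\EE{m_{\mu_N}(\cdot+\ri\eta)}$ over a window of width $N^{-1/2+o(1)}$ around $\hat\lambda_1$ with $\eta$ slightly below $N^{-1/2}$, obtaining a lower bound of order $1/N$ from the single outlier and an upper bound $o(1/N)$ from the comparison hypothesis and the square-root decay of $\Im m_\nu$. Your write-up is in fact more careful than the paper's about the parameter choices and about passing the expectation through the restriction to the good event.
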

\begin{proof}
Assume for contradiction that $\lambda_1$ lies outside a distance $N^{-\epsilon'}$ of $\supp(\nu)$ where $\epsilon'$ is smaller than the $\epsilon$ in the condition for \eqref{Green's function Comparison}.

Notice that we have the following inequality
\begin{gather} \label{ConcenGreen}
    \frac{1}{N}
    \le C \int_I \int_{-\infty}^{\infty} \frac{1}{N} \frac{\eta  \delta_{x=\lambda_1}}{(x-E)^2 + \eta^2} \dd x \dd E \le C \int_I \text{Im}[m_{\mu_N}](E+\ri \eta) \dd E, 
\end{gather}
letting $I=[\hat{\lambda}_1-\frac{N^{\gamma'}}{\sqrt{N}}, \hat{\lambda}_1+ \frac{N^{\gamma'}}{\sqrt{N}}]$ with $\gamma'<\gamma \wedge \delta/2$.  We need to choose $\eta$ to be smaller than $\frac{1}{N^{1/2 + \gamma'}}$. This will ensure that at least a one sided $\eta$ neighborhood of $\lambda_1$ will always lie the region $[\hat{\lambda}_1 - \frac{1}{N^{1/2 +\gamma'}}, \hat{\lambda}_1 + \frac{1}{N^{1/2 +\gamma'}}]$. The integral of $\frac{\eta}{(x-z)^2 + \eta^2}$ inside this half interval of size $\eta$ will certainly be greater than $\frac{\eta^2}{2 \eta^2}= \frac{1}{2}$.

We can take the expectation of \eqref{ConcenGreen} to get,
\begin{gather} \label{GfuncCompar}
    \frac{1}{N} \le C' \int_I \Im[\EE{m_{\mu_N}}](E+\ri \eta) \dd E  
    \le C' \int_I \frac{o(1)}{N^{\frac{1}{2} +\gamma}} + C' \int_I \text{Im}[m_{\nu}](E + \ri \eta) \dd E 
    \\ \label{SquareRoot}
    \le  C' \frac{o(1)}{N} + C'' \int_I \frac{\eta}{\sqrt{\kappa}} 
    \le C' \frac{o(1)}{N} + C'' \frac{\eta N^{\gamma'}}{\sqrt{N} \sqrt{\kappa}}
\end{gather}
In $\eqref{GfuncCompar}$ we used the assumption \eqref{Green's function Comparison}(since $\hat{\lambda}_1$ lies in the region this assumption is valid) 
while in \eqref{SquareRoot}, we used the fact that $\nu$ satisfies \eqref{squareRootBehavior}.

Notice that we can set $\eta = N^{-1/2 - \delta}$ for $\delta$ positive and $\kappa = N^{-\min{(\epsilon, \delta/4)}}$ and see that the error of \eqref{SquareRoot} will be $\frac{o(1)}{N}$. This contradiction implies that for large $N$, $\hat{\lambda}_1$ must necessarily be less than $N^{-\epsilon'}$. By concentration of $\lambda_1$ around $\hat{\lambda}_1$, we would know that all $\lambda_1$ will be less than $N^{-\epsilon'}$.
\end{proof}

\begin{theorem} \label{thm:Edge upper bound for the top eigenvalue}
For the Gaussian Ensemble that we are considering, there exists an $\epsilon>0$ such that all eigenvalues lie within distance $N^{-\epsilon}$ from the edge.
\end{theorem}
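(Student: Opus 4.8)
The plan is to verify that the hypotheses of Lemma \ref{lem:AverageComp} are met for our Gaussian ensemble, applied separately to the top and bottom eigenvalues, so that the conclusion $\dist(\lambda_1,\supp\nu)\le N^{-\epsilon'}$ follows immediately. There are three inputs to check. First, the square-root decay \eqref{squareRootBehavior} of $\Im m_\nu$ near the edges $E_L, E_R$: this is exactly the square-root edge behavior of $\rho$ established in Section 2.3 via Theorem 2.6 of \cite{Ajanki2015a}, since $\Im m_\nu(x+i\eta)=\int \eta\,\rho(t)\,((t-x)^2+\eta^2)^{-1}\dd t$ and the $\sqrt{\kappa}$-bound is a standard consequence of $\rho(E_R-t)\sim c_R\sqrt t$. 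Second, the Green's function comparison \eqref{Green's function Comparison}: this is where Corollary \ref{cor:Averaged Error} enters. On $\mathcal{D}'=\{z\in\mathcal{D}\mid\kappa\ge N^{-a}\}$ we have $|\EE{G}-D(g)|_\infty\lesssim(\log N)^{16}((N\kappa\omega^3)^{-1}+((N\eta)^2\omega^5)^{-1})$, and averaging the diagonal gives $|\EE{m_{\mu_N}}-m_\nu|$ bounded by the same quantity (up to a constant); for $\kappa\ge N^{-a}$ with $a$ small and $\eta\ge N^{-1/2-\delta}$ with $\delta$ small one checks $\omega\sim\kappa^{2/3}$ is bounded below by a negative power of $N$ that makes the whole bound $o(N^{-1/2-\gamma})$ for a suitable $\gamma>0$ — this is a routine bookkeeping of exponents, choosing $a,\delta,\gamma$ small enough relative to each other.

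Third, and this is the genuinely model-specific point, Lemma \ref{lem:AverageComp} requires the a priori concentration statement $\lambda_1=\hat\lambda_1+o(N^{\varepsilon-1/2})$ around a deterministic value. This I would obtain from the decomposition \eqref{def:c2}, $H=\mathsf{c}_2 X+Y$ with $X$ a GOE matrix independent of $Y$, together with Lemma \ref{lem:gaussianconcentration}: the map $H\mapsto\lambda_1(H)$ is $1$-Lipschitz in the entries, the entry vector of $H$ is jointly Gaussian with covariance matrix of operator norm $O(1/N)$ (this is precisely the content of \eqref{def:xidecay} combined with Lemma \ref{lem:decaymatrix} applied to $\Sigma$, exactly as in the proof that $\norm{\Sigma_1^{-1}}$ is bounded), so $\PP{|\lambda_1-\EE{\lambda_1}|\ge t}\le 2\exp(-cNt^2)$, giving fluctuations of size $N^{-1/2}\log N$ around the deterministic quantity $\hat\lambda_1:=\EE{\lambda_1}$. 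That meets the hypothesis with room to spare.

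With all three inputs in hand, Lemma \ref{lem:AverageComp} directly yields $\dist(\lambda_1,\supp\nu)\le N^{-\epsilon'}$ for the largest eigenvalue. Running the identical argument on the measure $\frac1N\sum\delta_{-\lambda_k}$ — equivalently, applying the lemma near the left edge $E_L$, where $\rho$ also has square-root behavior — controls the smallest eigenvalue $\lambda_N$. Since $\supp\nu=[E_L,E_R]$ is a bounded interval, ``within $N^{-\epsilon}$ of the edge'' for every eigenvalue is then immediate: all $\lambda_k$ lie in $[\lambda_N,\lambda_1]\subset[E_L-N^{-\epsilon'},E_R+N^{-\epsilon'}]$. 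Taking $\epsilon=\epsilon'$ finishes the proof.

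I expect the main obstacle to be the exponent bookkeeping in step two: one must simultaneously choose the constant $a$ defining $\mathcal{D}'$, the scale $\eta=N^{-1/2-\delta}$, and the gain $\gamma$ in \eqref{Green's function Comparison} so that on the relevant range $\kappa\in[N^{-\min(\epsilon,\delta/4)},\,c]$ the bound $(\log N)^{16}((N\kappa\omega^3)^{-1}+((N\eta)^2\omega^5)^{-1})$ from Corollary \ref{cor:Averaged Error} is genuinely $o(N^{-1/2-\gamma})$, while also respecting the constraint $\eta\ge(\log N)^{10}N^{-1}\omega^{-4}$ built into $\mathcal{D}$. The other steps (square-root behavior, Lipschitz concentration) are either quoted verbatim from earlier in the paper or are one-line applications of the stated lemmas.
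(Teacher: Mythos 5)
Your proposal follows essentially the same route as the paper: verify concentration of $\lambda_1$ about $\EE{\lambda_1}$ via Gaussian concentration with the $O(1/N)$ covariance operator norm, feed the averaged local law of Corollary \ref{cor:Averaged Error} into Lemma \ref{lem:AverageComp} as the Green's function comparison, and use the square-root edge behavior of $\nu$ from Section 2.3. Your exponent bookkeeping (with $\omega\sim\kappa^{2/3}$) checks out against the paper's choice $N^{-3/4+\delta}\ll\eta\ll N^{-1/2}$, and your explicit treatment of the left edge by symmetry is a point the paper leaves implicit.
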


\begin{proof}
We would like to apply Lemma \ref{lem:AverageComp}. First notice that by Gaussian concentration, we are able to prove that the distance of $|\lambda_1 - \EE{\lambda_1}| \le \frac{(\log N)^2}{\sqrt{N}}$ with probability $1-O(N^{-c\log N})$. We thus put $\hat{\lambda}_1 = \EE{\lambda_1}$ in the assumption of Lemma \ref{lem:AverageComp}.

Then we check that the error bounds in Corollary \ref{cor:Averaged Error} are sufficient for our purposes. The error that appears there is $\abs{\EE{G}-D(g)}_\infty\lesssim (\log N)^{16}\left(\frac{1}{N\kappa \omega^3} + \frac{1}{(N\eta)^2 \omega^5}\right)$. By the definition of $D$ and the Lipschitz continuity of $g$, we have $|\EE{\inv{N}{\Tr G}} - m_\nu |= O(N^{-\inv{2}-\gamma})$ for some $\gamma>0$  as long as we have $\eta\gg N^{-3/4+\delta}$ and $\kappa \sim N^{-\epsilon}$ for $\epsilon$ very small and $\delta>0$. Since $\delta$ can be arbitrarily small, we may choose $\eta$ such that $N^{-3/4+\delta}\ll \eta \ll N^{-1/2}$ and we can apply Lemma \ref{lem:AverageComp}.
\end{proof}

\section{Universality}

In the previous section, we proved a local law for $m_N$ as well as an improved local law for $\EE{m_N}$, and combining it with the concentration of the top eigenvalue to prove an upper bound on the top eigenvalue. According to a recent result by Landon and Yau \cite{Yau2017} below, the local law with upper bound on the top eigenvalue is sufficient to prove universality near the edge. 

\begin{theorem} \label{YauEdge}
Let $\eta* = N^{-\phi^*}$ for some $0<\phi^*<\frac{2}{3}$. We call a deterministic matrix $V$ $\eta^*$-regular if it satisfies the following properties.

\begin{enumerate}
\item There exists a constant $C_{V} \ge 0$ such that
\begin{gather*}
\frac{1}{C_V} \frac{\eta}{\sqrt{|E|+\eta}} \le \text{Im}[m_V(E + i \eta)] \le C_{V} \frac{\eta}{\sqrt{|E| + \eta}}, -1 \le E \le 0, \eta^*\le \eta \le 10,
\end{gather*}
and
\begin{gather*}
\frac{1}{C_V} \sqrt{|E|+\eta} \le \text{Im}[m_V(E+ i\eta)] \le C_{V} \sqrt{|E| + \eta},  0 \le E \le 1 , (\eta^*)^{1/2}|E| + \eta^* \le \eta \le 10.
\end{gather*}
\item There exists no eigenvalue of V in the region $[-\eta^*,0]$.
\item We have $\|V\|\le N^{C_V}$ for some $C_V >0$.
\end{enumerate}

Consider the ensemble $V_t =V + \sqrt{t} G$. Where $G$ is an independent GOE ensemble. Let $t$ satisfy $N^{-\epsilon} \ge t \ge N^{\epsilon} \eta^*$ and let $F:\mathbb{R}^{k+1} \rightarrow \mathbb{R}$  be a test function such that $\|F\|_{\infty} \le C $ and $\|F\|_{\infty} \le C$. Then there are deterministic parameters $\gamma_0\sim 1$ and $E_{-}$ such that
\begin{gather*}
|\mathbb{E}[F(\gamma_{0} N^{2/3}(\lambda_{i_0} -E_{-}), ...\gamma_{0}N^{2/3}(\lambda_{i_k} - E_{-}))] -
\mathbb{E}_{GOE}[F(N^{2/3}(\hat{\lambda}_{1} +2),...N^{2/3}(\hat{\lambda}_{k} +2))] \le N^{-c}
\end{gather*}
The first expectation is with respect to the eigenvalues of the ensemble $V_t$. The latter expectation is taken with respect to the eigenvalues $\hat{\lambda}_{i}$ of a GOE. $i_0$ is the first index i such that $i$th smallest eigenvalue of $V$ is greater than $-\frac{1}{2}$.
\end{theorem}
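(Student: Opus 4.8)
The statement is the edge–universality input established in \cite{Yau2017}, and the route I would take is the Dyson Brownian motion analysis of the deformed ensemble $V_t = V + \sqrt{t}\,G$ specialized to the spectral edge. The first step is to fix the deterministic profile: since $V$ is $\eta^*$-regular, its empirical spectral measure $\mu_V$ has a square-root edge at $E=0$ — this is precisely the content of condition~(1) together with the gap in condition~(2). The free convolution $\mu_t := \mu_V \boxplus \mu_{\mathrm{sc},t}$, with $\mu_{\mathrm{sc},t}$ the semicircle law of variance $t$, is the deterministic equivalent of the spectral measure of $V_t$; a subordination/fixed-point analysis of its Stieltjes transform shows that for $t$ in the window $[N^\epsilon\eta^*, N^{-\epsilon}]$ the measure $\mu_t$ still has a square-root edge, with edge location $E_-(t)$ and edge scaling $\gamma_0(t)\sim 1$ depending smoothly on $t$. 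This pins down the parameters $E_-$ and $\gamma_0$ of the statement, and in the comparison regime matches $E_-(t)$ and $\gamma_0(t)$ with the left edge $-2$ of the semicircle and its edge scaling.

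The second step is a local law and edge rigidity for $V_t$. Treating $\sqrt{t}\,G$ as the random part and $V$ as a fixed regular environment, a perturbative resolvent argument using conditions~(1)--(3) shows that $(V_t-z)^{-1}$ is well approximated, entrywise and in normalized trace, by the subordination answer uniformly for $\Im z \gtrsim N^{-2/3+\epsilon}$ near $E_-(t)$, and consequently that $\lambda_{i_0+j}(V_t) = \gamma_j(t) + O\bigl(N^{-2/3+\epsilon}(1+j)^{-1/3}\bigr)$ around the classical edge locations of $\mu_t$, with overwhelming probability.

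The third step — and the main obstacle — is to show that the rescaled edge eigenvalues $\gamma_0 N^{2/3}\bigl(\lambda_{i_0+j}(V_t) - E_-(t)\bigr)$ converge, for $t$ in the admissible window, to the Airy$_1$ point process, which is the common edge scaling limit of GOE. I would do this by homogenization of the DBM at the edge: couple the trajectory $s\mapsto\lambda_i(V_s)$ with the DBM trajectory started from a reference ensemble whose spectral density agrees with $\mu_V$ near $E=0$, and control the difference of the rescaled edge gaps. This needs a short-range (finite speed of propagation) reduction of the DBM near the edge, the local law and rigidity of Step~2 to control the environment, and a maximum-principle/energy estimate showing the difference process becomes negligible once $t \gg \eta^*$. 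Combining this with the fact that the GOE edge process is Airy$_1$ gives the claimed comparison of test-function expectations up to $N^{-c}$; the index shift to $i_0$ simply reflects that the eigenvalues of $V$ below $-\tfrac12$ move only by $O(\sqrt{t})$ and are irrelevant to the edge limit at $E_-(t)$.
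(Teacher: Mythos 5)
This statement is not proved in the paper at all: it is quoted as an external input, namely the edge universality result of Landon and Yau \cite{Yau2017} for Dyson Brownian motion started from an $\eta^*$-regular initial datum, and the paper uses it as a black box (the authors' own work goes into verifying its hypotheses for $H' = H - \sqrt{t}\,G$ via their local law and the bound on the extreme eigenvalues). So there is no internal proof to compare yours against. That said, your outline does track the architecture of the cited proof: the free-convolution analysis showing $\mu_V \boxplus \mu_{\mathrm{sc},t}$ retains a square-root edge with $E_-(t)$ and $\gamma_0(t)\sim 1$ for $N^{\epsilon}\eta^* \le t \le N^{-\epsilon}$, a local law and edge rigidity for $V_t$, and a short-range/homogenization comparison of the edge DBM with a reference flow. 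Two cautions if you intend this as an actual proof rather than a roadmap. First, the theorem as stated is a finite-$N$ comparison with a polynomial rate $N^{-c}$, so it is not enough to show both edge processes converge to the Airy$_1$ point process; you need the quantitative coupling, i.e.\ the maximum-principle/energy estimate for the difference of the two coupled eigenvalue trajectories with an explicit $N^{-c}$ error, which is the technical heart of \cite{Yau2017} and is entirely absent from your sketch. Second, the passage from "the reference ensemble" to the GOE itself requires an extra step (the reference flow must itself be matched to the GOE edge, e.g.\ by running the same argument with GOE initial data or by invoking known edge universality of the Gaussian ensembles), and the index shift $i_0$ needs the rigidity estimate of your second step to guarantee that no eigenvalue of $V_t$ originating from the spectrum of $V$ below $-\tfrac12$ intrudes on the edge window; you assert both but prove neither.
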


Call $H$ the ensemble with correlation structure $\xi_{ijkl} $.
Theorem \ref{thm:Edge upper bound for the top eigenvalue} combined with \ref{thm:bulklaw} shows that there exists a parameter $\Phi>0$ such that with high probability a matrix $M$ produced by $H$ would be $\eta^*$ regular for any $N^{-\phi}$ such that $\phi<\Phi$. Now fix some $\phi$ sufficiently small and $\phi<\Phi$ and $t=N^{-\phi}$; we would like to write the ensemble $H$ as $H'+\sqrt{t}G$ for $G$ an independent GOE ensemble. We will use the fact that $\phi$ is sufficiently small in the following section.

When N is large enough, $H'$ is the ensemble with correlation structure given by $\xi_{ijkl}- t \delta_{ij=kl}$. With t sufficiently small, the covariance matrix is positive and one can construct the ensemble. Also note that a matrix produce from $H'$ would satisfy the regularity estimates with parameter $N^{-\phi}$ as well due to our proof of the local law and upper bound for the top eigenvalue of the edge.

We will apply \eqref{YauEdge} as follows. Any matrix, $M$, in $H$ can be written in the form $M'+ t \text{GOE}$ where $M'$ is a matrix produced from the ensemble $H'$.$M'$ can be diagonalized into the form $V'$ by some unitary transformation $U$, which will leave invariant the GOE part. We will then condition on this matrix $M'$ and apply theorem \eqref{YauEdge}.We will thus get the following statement, the matrices of the form $M'+t \text{GOE}$ will satisfy a universality statement of the form.

\begin{gather} \label{BadScale}
    |\mathbb{E}_{M'}[F(\gamma_{0} N^{2/3}(\lambda_{1} -E_{-}), ...\gamma_{0}N^{2/3}(\lambda_{k} - E_{-}))] - 
\mathbb{E}_{GOE}[F(N^{2/3}(\hat{\lambda}_{1} +2),...N^{2/3}(\hat{\lambda}_{k} +2))] \le N^{-c}
\end{gather}

 where $\lambda_{1}$ are the eigenvalues of the considered matrix $M' + t \text{GOE}$.$\mathbb{E}_{M'}$ denotes the conditional expectation over the matrices of the form $M'+ t \text{GOE}$. We used for $N$ large enough, the largest eigenvalue of $M'$ is of distance less than $1/2$ from the edge, so the index $i_0$ is 1. The only issue with \eqref{BadScale} is that $\gamma_0$ is a function of the initial data, we will make this a universal constant in the next section.

 \subsection{Changing the scaling factor}

 Let $a^{t}$ be the edge of the ensemble corresponding to $H' + t \text{GOE}$ . Let us denote the Green's function of this ensemble as $m_{(H')^t}$; let us also write the density of this ensemble as $\rho_{(H')^t}$

As in Thm 2.2, let $E^{t}_{-}$ be the edge corresponding to the model $V + t \text{GOE}$ where $V$ is the deterministic diagonal matrix and the $\text{GOE}$ is an independent ensemble. Let us denote the Green's function of this ensemble as $m_{V^t}$; let us also write the density of this ensemble as $\rho_{V^t}$. We will be considering the case that $V$ is a fixed matrix coming from the ensemble $H$. From now on, we will assume that $V$ is $\eta^*$ regular so that the conditions of Thm 2.2. hold.

From the results of Thm 2.2. we can write $\rho_{V^t}(E) = \gamma_{V^t}^{-1/2} \sqrt{E- E^{t}_{-}}(1+ t^{-2} O(|E-E^{t}_{-}|))$
and $\rho_{H^t}(E) = \gamma_{H^t}^{1/2} \sqrt{E- a^{t}}(1+ t^{-2} O(|E-E^{t}_{-}|))$.

We will show the following bound on the sacling factors.
\begin{lemma} \label{ScalingFactor}
For sufficiently large N, we have that $\gamma_{H} - \gamma_{V^t} =\gamma_{(H')^{t}} - \gamma_{V^t} = O(t)$
\end{lemma}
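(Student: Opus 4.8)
The statement to prove, Lemma \ref{ScalingFactor}, asserts that the square-root edge scaling factors of three ensembles — the original correlated ensemble $H$, its shifted version $(H')^t$ with $H' + t\,\text{GOE} = H$ in distribution, and the deformed model $V^t = V + t\,\text{GOE}$ with $V$ a typical (deterministic, $\eta^*$-regular) realization from $H$ — all agree up to $O(t)$. My plan is to express each scaling factor as a local functional of the corresponding self-consistent density near its edge, and then to compare the three self-consistent problems perturbatively in $t$.

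\textbf{Step 1: Reduce $\gamma_{H} = \gamma_{(H')^t}$ exactly (or up to negligible error).} Since $H$ and $H' + t\,\text{GOE}$ have the same law, their limiting spectral measures coincide; the difference $\gamma_H - \gamma_{(H')^t}$ is therefore zero up to the $O(N^{-1})$-type discretization errors already tracked in \eqref{def:xi} and Corollary \ref{col:DisSV1}, hence certainly $O(t)$. This is the easy equality in the statement, so the real content is $\gamma_{(H')^t} - \gamma_{V^t} = O(t)$ (equivalently $\gamma_H - \gamma_{V^t} = O(t)$).

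\textbf{Step 2: Write the scaling factor as a stable functional of the self-consistent equation.} For an ensemble of the form $W + t\,\text{GOE}$ the Stieltjes transform $m$ (or the matrix Green's function $G_0$) satisfies a self-consistent equation obtained from \eqref{sceinfty1} by adding $t$ to the self-energy operator, i.e. $G_0(-z - S(G_0) - t\langle G_0\rangle) = I$ in the $V^t$ case and $G_0(-z - S_{H}(G_0)) = I$ with the extra GOE variance already absorbed in the $H^t$ case. Near the right edge $E_R$ the square-root coefficient $c_R$ (hence $\gamma$) is determined by the second-order Taylor expansion of the self-consistent map at the edge; by the analysis in \cite{Ajanki2015a} (invoked already for \eqref{sceinfty} and Lemma \ref{lem:stab}) this coefficient depends Lipschitz-continuously — indeed smoothly — on the bounded data of the equation, namely on the operator $S$, the shift parameter $t$, and the location of the edge. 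So it suffices to show the data of the two self-consistent problems differ by $O(t)$ in the relevant norm.

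\textbf{Step 3: Compare the two self-consistent problems.} For $H^t = H' + t\,\text{GOE}$ the self-energy operator is $S$ acting on the full matrix $G_0$; for $V^t = V + t\,\text{GOE}$, after conditioning on $V$, the self-energy is only the scalar GOE piece $t\langle \cdot\rangle$ plus the deterministic $-V$ term, and the point is that $V$ is itself $\eta^*$-regular and its empirical measure is within $N^{-\epsilon}$ of $\nu$ by Theorem \ref{thm:Edge upper bound for the top eigenvalue} and the local law. The difference between ``propagating the correlations through $S$'' and ``freezing $V$ and only adding $t\,\text{GOE}$'' is exactly of order $t$: removing the $t\,\text{GOE}$ part from $H$ to form $H'$ changes $\xi$ by $t\,\delta_{ij=kl}$ (as stated in the paragraph before the lemma), so the edge data moves by $O(t)$; and replacing the random $H'$-part by a frozen typical realization $V$ changes the edge only by the fluctuation of the edge of $H'$, which is $o(N^{\varepsilon-1/2}) \ll t$ by Gaussian concentration of the top eigenvalue (Theorem \ref{thm:Edge upper bound for the top eigenvalue}'s proof). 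Feeding these $O(t)$ perturbations of the data through the Lipschitz dependence of the edge scaling factor from Step 2 yields $\gamma_{(H')^t} - \gamma_{V^t} = O(t)$, and combined with Step 1 this gives the lemma.

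\textbf{Main obstacle.} The delicate point is Step 2: making precise that the square-root coefficient $c_R$ (and thus $\gamma$) is a Lipschitz-$O(t)$-stable functional of the self-consistent data \emph{uniformly} as $t \to 0$, since the error terms in the expansions $\rho_{V^t}(E) = \gamma_{V^t}^{-1/2}\sqrt{E - E^t_-}\,(1 + t^{-2} O(|E - E^t_-|))$ carry a $t^{-2}$ prefactor, so one must check the edge is probed at scale $|E - E^t_-| \lesssim t^{2}\cdot(\text{small})$ where the leading square-root term dominates, and that the map from data to $c_R$ has derivative bounds independent of $t$ on that scale. This is exactly the regime controlled by the quantitative edge analysis of \cite{Ajanki2015a} together with the regularity hypotheses imported in Theorem \ref{YauEdge}, so the estimate goes through, but the bookkeeping of the $t$-dependence in those expansions is the part that needs care.
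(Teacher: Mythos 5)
Your Step 1 is fine and matches the paper's implicit use of the distributional identity $H \stackrel{d}{=} H' + \sqrt{t}\,\mathrm{GOE}$. The gap is in Steps 2--3. You propose to treat $\gamma_{V^t}$ as a Lipschitz functional of the ``data'' of a self-consistent equation and argue that this data is within $O(t)$ of the data for $H$. But $V$ is a \emph{frozen realization}: the self-consistent equation for $V^t$ is the free-convolution fixed point driven by the empirical spectral measure of $V$, which is an $N$-dependent random object, not an $O(t)$ perturbation of the operator $S$ in any norm that the edge-stability theory of \cite{Ajanki2015a} controls. Moreover, the square-root coefficient of $\rho_{V^t}$ is determined by the behavior of $m_{V^0}$ at spectral scale $\operatorname{Im} z \sim t\sqrt{\kappa} \sim t^{2}$ near the edge, i.e.\ by the mesoscopic eigenvalue density of $V$ there -- not by the location of its extreme eigenvalue. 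Your appeal to ``Gaussian concentration of the top eigenvalue'' therefore addresses the wrong quantity, and you acknowledge but do not resolve the $t^{-2}$ prefactor problem in the expansions; as written, the argument does not close.

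The paper's proof supplies exactly the missing mechanism. It uses the subordination representation $\rho_{W^t}(E) = \pi^{-1}\operatorname{Im} m_{W^0}(z(E))$ with $z$ solving $z + t\, m_{W^0}(z) = E$, evaluates both densities at distance $\kappa = t^{2+\epsilon}$ from their respective edges, and splits the difference into (i) $\operatorname{Im} m_{(H')^0}(z_1) - \operatorname{Im} m_{V^0}(z_1)$, controlled by the local law of Theorem \ref{thm:bulklaw} at $\operatorname{Im} z_1 \sim t^{2+\epsilon/2}$ (which lies inside $\mathcal{D}$ precisely because $t = N^{-\phi/2}$ with $\phi$ small), and (ii) $\operatorname{Im} m_{V^0}(z_1) - \operatorname{Im} m_{V^0}(z_2)$, controlled by Lipschitz continuity once Rouch\'e's theorem (Lemma \ref{Distance Lemma}) forces $|z_1 - z_2| \le t^{2+\delta}$. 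Only then does the square-root expansion at scale $\kappa = t^{2+\epsilon}$ force $|\gamma_{V^t}^{1/2} - \gamma_{H^t}^{1/2}| \le t$. To repair your proof you would need to replace the abstract ``Lipschitz dependence on the data'' step with a quantitative comparison of $m_{V^0}$ and $m_{(H')^0}$ at mesoscopic imaginary part of order $t^2$ near the edge, together with a matching of the two subordination points -- which is essentially the paper's argument.
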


\begin{proof}
Define $z_1$ to be the solution of $z_1 + t m_{(H')^0}(z_1) = a^t + \kappa$ and $z_2$ to be the solution of $z_2 + t m_{V^0}(z_1) = E_{-}^{t} + \kappa$. We would need to compare the values of $\text{Im} m_{(H')^0}[z_1]$ and $\text{Im} m_{V^0}[z_2]$ in order to compare the values of $\rho_{(H')^t}$ and $\rho_{V^t}$ at a distance $\kappa$ away from their edges.

Namely, we know that $\rho_{(H')^t}(a^t + \kappa) = \text{Im}[m_{(H')^{0}}[z_1]]$ and similarly for $z_2$.

Indeed, we have

\begin{gather}
\pi[\rho_{(H')^t}(a^t + \kappa) - \rho_{V^t}(E^t_{-} + \kappa)] = \text{Im}[m_{(H')^{0}}[z_1]] - \text{Im}[m_{V^{0}}[z_2]]\\ \label{Differences}
= \text{Im}[m_{(H')^{0}}[z_1]] - \text{Im}[m_{V^{0}}[z_1]]  + \text{Im}[m_{V^{0}}[z_1]] - \text{Im}[m_{V^{0}}[z_2]]
\end{gather}

In \eqref{Differences}, the first term can be bounded by a sufficiently good local law. The second term can be bounded by a Lipschitz condition provided $|z_1 - z_2|$ are sufficiently close to each other.

We will now attempt to bound the quantity $|z_1 -z_2|$

\begin{lemma}\label{Distance Lemma} 
Assume that we are considering a matrix model $H$ that is $\eta^*$ regular for all $\eta^* = N^{-\phi}$, for $\Phi>\phi>0$.

Consider the time scale $t= N^{-\phi/2}$ and choose $\kappa$ to be the almost optimal $t^{2+\epsilon}$ for the edge expansion. Then there exists a small parameter $\delta$ such that for N large enough we can ensure that $|z_1 -z_2| \le t^{2+\delta}$
\end{lemma}

\begin{proof}

We have that
\begin{gather}
z_1 + t m_{(H')^0}(z_1) - (z_2 + t m_{V^{0}}(z_2)) = (a^t - E^{t}_{-}) \\ \label{Rouche}
(z_1 -z_2) + t(m_{(H')^{0}}(z_1) - m_{(H')^{0}}(z_2)) = (a^t -E^{t}_{-}) + t(m_{V^{0}}(z_2) - m_{(H')^{0}}(z_2))
\end{gather}

We will try to prove that $|z_1 - z_2|$ is sufficiently small. We will do this by appealing to Rouche's Theorem and a Local Law bound to the second term on the RHS of \eqref{Rouche}.

We will now address the Local Law portion of the above estimate. Recall the formula that $\Im[z_1]= t \text{Im}[m_{(H')^{t}}(a^t + \kappa)]$. From the earlier expansion of the density around $\kappa$, we know that for $\kappa \le ct^2$, we have that $\text{Im}[m(z_1)]$ is up to a constant factor equal to $\gamma_{(H')^{t}}\sqrt{\kappa}$ where the $\gamma$ scaling factor is of order 1.

Thus, we see that $\Im[z_1]$ is of the order of $t\sqrt{\kappa}$. Notice that if we take $\kappa$ near the limit scale of $t^{2+\epsilon}$, as we will do later, then we will have that $\text{Im}[z_1]$ is of the size $t^{2+\epsilon/2}$ Using the fact that we are dealing with time scales of the order $t= N^{-\phi/2}$, we see that $\Im[z_1] = N^{-\phi(1+\epsilon/4)}$. This is in a regime where we can apply the local law \ref{thm:bulklaw}. 

To confirm this carefully, note  that 
$\dist(z_1,\supp \nu) \ge \text{Im}[z_1]$
so the following should hold for 
\begin{gather*}
\text{Im}[z_1] = N^{-\phi(1+\epsilon/4)}  \ge (\log N)^{10} N^{-1}(N^{2/3 (-\phi)(1+\epsilon)})^{-4} \gg (\log N)^{\log\log N} N^{-1/2}  
\end{gather*}
so the point $z_1$ is in the region $\mathcal{D}$ when  when we have that $\phi$ is sufficiently small.
Clearly, we would also have that a circle of radius $t^{2+\delta}$ around $z_1$ for $\delta > \epsilon/2$ would also lie in the region $\mathcal{D}$ .

Applying \ref{thm:bulklaw} for z in a circle of radius $t^{2+\delta}$ around $z_1$ will give us that the error of $|m_{V^{0}}(z) -m_{(H')^{0}}(z)| \le (\log N)^4(\sqrt{1 /(N\text{Im}(z))}) (\text{Im}(z))^{- 2/3}$. This can be seen to be much less than $t^3$ given that we set $\phi$ to be sufficiently small. Thus, we have a good local law bound on the second term of \eqref{Rouche} once $\phi$ is set to be sufficiently small.

We know that since we assumed $V$ is $\eta^*$ regular for $\eta^* = N^{-\Phi}$ for $\phi<\Phi$ from the local law on the ensemble $H$, we also know that with high probability $|a^t - E^t|$ should be less than $N^{-\Phi}$. Again choosing $\phi$ small enough, this will imply that $|a^t - E^t| \le t^{3}$

Consider a circle of radius equal to $R= \frac{t^{2+\delta}}{1- t K}$ where $K$ is such that we have  $|m_{(H')^{0}}(z_1) - m_{(H')^{0}}(z_2)| \le K |z_1 -z_2|$.
around the point $z_1$. Notice that t decreases as N increases; thus for very large N, we will have that $tK \le \frac{1}{2}$. Therefore, we have that R is a circle of radius less than $2 t^{2+\delta}$ for large enough N.

On this circle of radius R, we have by the local law and estimates on $|a^{t}- E^{t}_{-}|$ that the right hand side of \eqref{Rouche} will be less than the left hand side of \eqref{Rouche} in absolute value on the boundary. If the left hand side of $\eqref{Rouche}$ were 0, then we would clearly have the unique solution $z_2=z_1$. Rouche's theorem then shows that there is a solution such that $|z_2-z_1| \le R= t^{2+\delta}$\end{proof}. Putting this content back into \eqref{Differences} with $\kappa = t^{2+\epsilon}$.

\begin{gather*}\label{KappaCompare}
\gamma_{H^t}^{1/2} t^{1+\epsilon/2} (1+ t^{-2} O(t^{2+\epsilon}))-\gamma_{V^t}^{-1/2} t^{1+\epsilon} (1+ t^{-2} O(t^{2+\epsilon}) \le \\ \label{LLandLip}
\text{Im}[m_{(H')^{0}}[z_1]] - \text{Im}[m_{V^{0}}[z_1]]  + \text{Im}[m_{V^{0}}[z_1]] - \text{Im}[m_{V^{0}}[z_2]] \le
t^3 + K t^{2+\delta}
\end{gather*}

For the first term in \eqref{LLandLip}, we used the local law around $z_1$ to bound the quantity by $t^3$ for the second quantity we used Lipschitz continuity of $m_{V^{0}}$ combined with the estimate on $|z_1-z_2|$ coming from \eqref{Distance Lemma}
Notice that if we now have that $|\gamma_{V^t}^{1/2}- \gamma_{H^t}^{1/2}| \ge t $, then it would clearly be impossible for the inequality in $\eqref{KappaCompare}$ to hold.
Thus, we have proved a bound on 
$|\gamma_{V^t}^{1/2}- \gamma_{H^t}^{1/2}| \le t$, which can be turned into an o(1) bound on $\gamma_{V^t} -\gamma_{H^t}$ by squaring and using the fact that $\gamma_{V^t}$ is of constant order.
\end{proof}

\subsection{Final universality Result}

Using the scaling results coming from the previous section we can translate \eqref{BadScale} as follows.

\begin{theorem}\label{AlmostUniversality}

There exists a scaling factor $\gamma$ that depends only on the matrix ensemble H such that the following inequality holds for functions $G: \mathbb{R}^{k} \rightarrow \mathbb{R}$ such that $\|G\|_{\infty}, \|\nabla G\|_{\infty} \le C$

\begin{gather}\label{AlUeq}
 |\mathbb{E}_{H}[G(\gamma N^{2/3}(\lambda_{2} -\lambda_{1}), ...\gamma N^{2/3}(\lambda_{k} - \lambda_1))] - 
\mathbb{E}_{\text{GOE}}[G(N^{2/3}(\hat{\lambda}_{2}-\hat{\lambda}_1),...N^{2/3}(\hat{\lambda}_{k} -\hat{\lambda}_1))] \le N^{-c}
\end{gather}
\end{theorem}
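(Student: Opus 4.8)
The plan is to derive \eqref{AlUeq} from the conditional universality statement \eqref{BadScale}, which is already in hand. Two modifications are needed: first, pass from the $E_-$-centered linear eigenvalue statistics appearing in \eqref{BadScale} to the gap statistics in \eqref{AlUeq}; second, replace the non-universal scaling factor $\gamma_0$ (which depends on the conditioned matrix $M'$) by the deterministic constant $\gamma:=\gamma_H$ supplied by Lemma \ref{ScalingFactor}. Once $\gamma_0$ has been replaced by $\gamma$ conditionally on $M'$, averaging over $M'\sim H'$ turns $\mathbb{E}_{M'}$ into $\mathbb{E}_H$, since $M'+\sqrt t\,G$ has the law of $H$.

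For the first modification, given $G\in C^1(\mathbb{R}^{k-1})$ with $\|G\|_\infty,\|\nabla G\|_\infty\le C$, set $\widetilde F(x_1,\dots,x_k):=G(x_2-x_1,\dots,x_k-x_1)$; then $\|\widetilde F\|_\infty\le C$ and $\|\nabla\widetilde F\|_\infty\lesssim C$, so $\widetilde F$ is an admissible test function for Theorem \ref{YauEdge}, hence for \eqref{BadScale}. Because $\gamma_0N^{2/3}(\lambda_i-E_-)-\gamma_0N^{2/3}(\lambda_1-E_-)=\gamma_0N^{2/3}(\lambda_i-\lambda_1)$ and $N^{2/3}(\hat\lambda_i+2)-N^{2/3}(\hat\lambda_1+2)=N^{2/3}(\hat\lambda_i-\hat\lambda_1)$, the centering $E_-$ and the shift by $2$ cancel, and applying \eqref{BadScale} to $\widetilde F$ yields
\[
\Big|\mathbb{E}_{M'}\!\big[G\big(\gamma_0N^{2/3}(\lambda_2-\lambda_1),\dots,\gamma_0N^{2/3}(\lambda_k-\lambda_1)\big)\big]-\mathbb{E}_{\text{GOE}}\big[G\big(N^{2/3}(\hat\lambda_2-\hat\lambda_1),\dots,N^{2/3}(\hat\lambda_k-\hat\lambda_1)\big)\big]\Big|\le N^{-c}.
\]

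For the second modification, I would use edge rigidity for the conditional ensemble $M'+t\,\text{GOE}$: for every $\epsilon>0$ and $D>0$, on an event $\mathcal E$ of probability $1-N^{-D}$ — uniformly over $\eta^*$-regular $M'$, by the local law Theorem \ref{thm:bulklaw}, Theorem \ref{thm:Edge upper bound for the top eigenvalue}, and standard Dyson Brownian motion estimates — one has $N^{2/3}(\lambda_i-\lambda_1)\le N^\epsilon$ for all $i\le k$. On $\mathcal E$ the mean value theorem bounds $|G(\gamma_0N^{2/3}(\lambda_2-\lambda_1),\dots)-G(\gamma N^{2/3}(\lambda_2-\lambda_1),\dots)|$ by $\sqrt k\,C\,|\gamma_0-\gamma|\max_{i\le k}N^{2/3}(\lambda_i-\lambda_1)\lesssim|\gamma_0-\gamma|N^\epsilon$, while on $\mathcal E^c$ it is at most $2C$. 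Lemma \ref{ScalingFactor} gives $|\gamma_0-\gamma|=|\gamma_{V^t}-\gamma_H|=O(t)=O(N^{-\phi})$ uniformly over $\eta^*$-regular $M'$, so choosing $\epsilon<\phi$ and $D$ large we may replace $\gamma_0$ by $\gamma$ in the display above at the price of an additional $O(N^{-\phi+\epsilon}+N^{-D})=O(N^{-c})$. Integrating this conditional estimate over $M'\sim H'$ and using that $M'+\sqrt t\,G$ has the law of $H$, we replace $\mathbb{E}_{M'}$ by $\mathbb{E}_H$ without altering the GOE side, which is \eqref{AlUeq}.

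The step I expect to be the main obstacle is the second one: one must verify that $\eta^*$-regularity of $M'$ — and hence the hypotheses of both Theorem \ref{YauEdge} and Lemma \ref{ScalingFactor} — holds with probability $1-N^{-D}$ for matrices drawn from $H'$ (this is precisely where the lower bound on edge eigenvalues, the absence of eigenvalues in $[-\eta^*,0]$, and the two-sided control of $\mathrm{Im}\,m$ coming from the local law and Theorem \ref{thm:Edge upper bound for the top eigenvalue} are used), and that both the $O(t)$ in Lemma \ref{ScalingFactor} and the rigidity exponent $\epsilon$ are uniform over this family, so that the exceptional non-regular realizations contribute only a negative power of $N$ after averaging over $M'$.
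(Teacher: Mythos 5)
Your proposal is correct and follows essentially the same route as the paper: apply the conditional statement \eqref{BadScale} to a test function encoding the gaps, use Lemma \ref{ScalingFactor} together with the Lipschitz bound on the test function and the boundedness of the rescaled eigenvalues to swap $\gamma_0$ for $\gamma$, and then average over $M'$ (with a trivial bound off the regularity event) to pass from $\mathbb{E}_{M'}$ to $\mathbb{E}_H$. Your explicit high-probability rigidity event $\mathcal{E}$ is a slightly more careful version of the paper's assertion that the $N^{2/3}(\lambda_i - E_-)$ are bounded, but it is the same argument.
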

\begin{proof}

First, notice that we can find a function $F:\bR^{k+1} \rightarrow \bR$ such that $\|F\|_{\infty}$ and $\|\nabla F\|_{\infty}$ are bounded  and
\begin{gather*}
    F(x_1,..x_{k+1}) = G(x_1-x_2,...x_1-x_{k+1})
\end{gather*}

Recall from earlier discussion that we can write any matrix from the ensemble $H$ as $M' + t GOE$ where M' is generated from the ensemble $H'$ with correlation structure $\xi_{abcd} - t^2 \delta_{ab=cd}$
Let $\Omega$ be the set in which we know that $M'$ has sufficiently good regularity so that $\eqref{BadScale}$ holds for the function F.
On $\Omega$, we would like to change the scaling factor $\gamma_0$ to $\gamma$, which is the scaling factor at the edge for the limiting spectral density.
As before, with high probability $M'$ has sufficient regularity so we can ensure that \eqref{BadScale} holds. We only need to change the $\gamma_0$ factor to $\gamma$ which the edge scaling coefficient of the ensemble $H$.

From \eqref{ScalingFactor}, we know that the difference between the $\gamma_0$ appearing in \eqref{BadScale} and the $\gamma$ appearing here is of the order $t = N^{-\phi/2}$.
Finally, one can appeal to the Lipschitz nature of $F$ as well as the fact that the $N^{2/3}(\lambda_{i_k} - E_{-})$
are bounded to say that

\begin{gather*}
    |F(\gamma N^{2/3}(\lambda_{1} -E_{-}), ...\gamma N^{2/3}(\lambda_{k} - E_{-}))-
    F(\gamma_0 N^{2/3}(\lambda_{3} -E_{-}), ...\gamma_0 N^{2/3}(\lambda_{k} - E_{-}))| \le  C k N^{-\phi/2} 
\end{gather*}

One can then take expectation with respect to the ensemble $M' + t \text{GOE}$ with $M'$ fixed and then apply the triangle inequality with respect \eqref{BadScale} to prove

\begin{gather*}
    |\mathbb{E}_{M'}[F(\gamma N^{2/3}(\lambda_1 - E^{M}_{-}) ,...,\gamma N^{2/3}(\lambda_k -E^{M}_{-})] - \mathbb{E}_{\text{GOE}}[F(N^{2/3}(\hat{\lambda}_1+2),...N^{2/3}(\hat{\lambda}_k +2))]| \le N^{-c}
\end{gather*}

Translating this statement to $G$, we get for matrices $M'$ in $\Omega$

\begin{gather}\label{CondUniv}
    |\mathbb{E}_{M'}[G(\gamma N^{2/3}(\lambda_1 - \lambda_2) ,...,\gamma N^{2/3}(\lambda_1 -
    \lambda_k)] - \mathbb{E}_{\text{GOE}}[G(N^{2/3}(\hat{\lambda}_1-\hat{\lambda}_2),...N^{2/3}(\hat{\lambda}_1 -\hat{\lambda}_k))]| \le N^{-c}
\end{gather}

One would now like to remove the conditional expectation in the above expression. Namely, we would like to integrate \eqref{CondUniv} over the matrices $M'$ found in $\Omega$ while using the trivial bound that $|E_{H'}[G] - E_{GOE}[G]|$ is bounded by a constant for all matrices $M'$ not found in  $\Omega$.
We thus get the full universality statement

\begin{gather}\label{FullUniv}
    |\mathbb{E}_{H}[G(\gamma N^{2/3}(\lambda_1 - \lambda_2) ,...,\gamma N^{2/3}(\lambda_1 -
    \lambda_k))] - \mathbb{E}_{\text{GOE}}[G(N^{2/3}(\hat{\lambda}_1-\hat{\lambda}_2),...N^{2/3}(\hat{\lambda}_1 -\hat{\lambda}_k))]| \le N^{-c}
\end{gather}

as desired.

\begin{remark}
As long as we know that a version of the Dyson-Brownian Motion result holds for sample covariance matrices, then we will be able prove edge universality using the local law and edge upper bound for the top eigenvalue results from the previous section.
\end{remark}
\end{proof}

\appendix
\section{Proof of Theorem \ref{Jaffard}}
Let $B=I-A$. Since $\norm{B}<1$, We can expand $A^{-1} = \sum_{k=1}^\infty B^k$. We need the following lemma to bound each term.

For simplicity, we will prove the statement of polynomial decay of inverse of order $1$ for matrix decay of order $2+\delta$. The following proof can readily be generalized to show decay of inverse of order $d-1- \delta$, $\delta>0$, given matrix decay of order $d$ for $d>2$.

\begin{lemma}
We have that 
\begin{gather} \label{pwrnrmbnd}
\|B^n\|_{\alpha} \le E n^k (\frac{1+\norm{B}}{2})^{n}
\end{gather}
where E is a function that, upon fixing $\delta$ is only polynomially dependent on $\norm{B}_{2+\delta}$ and $1-\norm{B}$ while k is dependent only on $\delta$.
\end{lemma}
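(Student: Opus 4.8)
The plan is to control the entries of $B^n$ by a fixed power of $n$ times a geometric factor governed by the operator norm $\norm{B}<1$, rather than by the slow-decay norm. The crude estimate $\norm{B^n}_\alpha\le C^{n-1}\norm{B}_\alpha^n$ coming from Lemma \ref{ProdofDecay} is useless, since $C\norm{B}_\alpha$ may exceed $1$; the whole point is to pay the slow-decaying norm $\norm{B}_{2+\delta}$ on only a bounded number of the $n$ factors in $B^n=B\cdots B$ and to let the geometric rate come from $\norm{B}$. Throughout write $s:=2+\delta$, so $\abs{B_{ij}}\le\norm{B}_s(1+\abs{i-j})^{-s}$; the target decay order $\alpha$ satisfies $1\le\alpha\le s-1$ (for the simplified statement, $\alpha=1$).

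For each $n$ and each pair $(i,j)$ with $m:=\abs{i-j}$, I would split $B=B_0+B_1$, where $B_0$ is the restriction of $B$ to entries with $\abs{i-j}<R$ and $B_1$ the complementary tail, the cutoff $R$ being chosen of order $m/n$ but never below a threshold $R_0\sim(\norm{B}_s/(1-\norm{B}))^{1/(s-1)}$. A Schur-test estimate (as in the proof of Lemma \ref{lem:decaymatrix}) gives $\norm{B_1}\le\norm{B_1}_{l^\infty\to l^\infty}\lesssim\norm{B}_s\sum_{\abs{l}\ge R}(1+\abs{l})^{-s}\lesssim\norm{B}_s R^{-(s-1)}$; choosing the constant in $R_0$ suitably this forces $\norm{B_1}\le\tfrac14(1-\norm{B})$, hence $\norm{B_0}\le\norm{B}+\norm{B_1}\le\tfrac{3\norm{B}+1}{4}<\tfrac{1+\norm{B}}{2}$, while $B_1$ still obeys $\abs{(B_1)_{ab}}\le\norm{B}_s(1+\abs{a-b})^{-s}$. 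In the easy regime $m\le m_0:=C_1nR_0$ (with $C_1\ge2$) I would simply use $\abs{(B^n)_{ij}}\le\norm{B^n}\le\norm{B}^n$ together with $(1+m)^\alpha\le(1+m_0)^\alpha\lesssim n^\alpha(\norm{B}_s/(1-\norm{B}))^{\alpha/(s-1)}$ and $\norm{B}^n\le(\tfrac{1+\norm{B}}{2})^n$; that contribution is already of the claimed form.

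In the main regime $m>m_0$ I would take $R:=m/(2n)\ge R_0$ and expand $B^n=\sum_{w\in\{0,1\}^n}B_{w_1}\cdots B_{w_n}$, writing the product attached to a word with exactly $r$ ones as $B_0^{a_0}B_1B_0^{a_1}\cdots B_1B_0^{a_r}$ with $a_0+\cdots+a_r=n-r$. In the $(i,j)$ entry each $B_0$-step moves by less than $R$, so all the $B_0$-blocks together move by less than $nR=m/2$; hence $r=0$ contributes nothing, and for $r\ge1$ the displacements $d_1,\dots,d_r$ made by the $B_1$-factors satisfy $\sum_sd_s>m/2$, so some $d_{s^*}\ge m/(2r)$. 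I would then split the entry by the location $s^*$ of the largest long jump, replace that one $B_1$ by its truncation to jumps $\ge m/(2r)$ (of operator norm $\lesssim\norm{B}_s(m/(2r))^{-(s-1)}$, by the same Schur estimate), and bound $\abs{M_{ij}}\le\norm{M}$ and use submultiplicativity to get $\abs{(B_{w_1}\cdots B_{w_n})_{ij}}\lesssim r\,\norm{B_0}^{n-r}\norm{B_1}^{r-1}\norm{B}_s(m/(2r))^{-(s-1)}$. Since $\alpha\le s-1$ one has $(1+m)^\alpha m^{-(s-1)}\le2^\alpha$ for $m\ge1$, so multiplying by $(1+m)^\alpha$ removes all $m$-dependence and leaves $\lesssim r^{s}\norm{B_0}^{n-r}\norm{B_1}^{r-1}\norm{B}_s$; summing over the $\binom nr$ words with $r$ ones, over $1\le r\le n$, and using $\norm{B_1}\le\tfrac14(1-\norm{B})$ and $r^{s}\le n^{s}$, I would obtain
\[
\norm{B^n}_\alpha\lesssim\norm{B}_s\sum_{r=1}^n\binom nr r^{s}\norm{B_0}^{n-r}\Big(\tfrac{1-\norm{B}}{4}\Big)^{r-1}\le\frac{4\norm{B}_s\,n^{s}}{1-\norm{B}}\Big(\norm{B_0}+\tfrac{1-\norm{B}}{4}\Big)^n,
\]
and $\norm{B_0}+\tfrac14(1-\norm{B})\le\tfrac{1+\norm{B}}{2}$, so the right-hand side is of the form $En^k(\tfrac{1+\norm{B}}{2})^n$ with $k=\ceil{s}$ and $E$ polynomial in $\norm{B}_s$ and $(1-\norm{B})^{-1}$.

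The main obstacle, and really the only nonroutine point, is the calibration underlying the last two steps: the band width $R$ must be large enough (in terms of $m/n$ and of $\norm{B}_s/(1-\norm{B})$) that the tail $B_1$ is operator-norm small, while still leaving the band with $\norm{B_0}+\norm{B_1}\le\tfrac{1+\norm{B}}{2}$. Only with this precise inequality does the binomial sum $\sum_r\binom nr\norm{B_0}^{n-r}\norm{B_1}^{r-1}$ collapse to $O((\tfrac{1+\norm{B}}{2})^n)$ rather than growing geometrically, and only with $R_0$ bounded below by the right power of $\norm{B}_s/(1-\norm{B})$ is the threshold $m_0$ small enough that the price $(1+m_0)^\alpha$ in the easy regime stays polynomial---rather than super-polynomial---in $\norm{B}_s$ and $(1-\norm{B})^{-1}$.
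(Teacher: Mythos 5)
Your argument is correct, but it is organized quite differently from the paper's, which follows Jaffard's commutator scheme. The paper writes (in effect) $(j-k)[B^n]_{jk}=\sum_{i=0}^{n-1}(\tilde B^{\,i}\hat B B^{n-i-1})_{jk}$, where $\tilde B$ is the band of width $|j-k|/n$ and $\hat B$ the weighted tail; this telescoping identity pins down exactly one slowly decaying factor from the outset, so only a single sum of length $n$ appears, and each term is estimated by the Young/Cauchy--Schwarz interpolation $|(M\hat B N)_{xy}|\le\|M\|\,\|B\|_{2+\delta}\|N\|$ of Lemma \ref{MainTermJaffard}. You instead expand $B^n$ over all $2^n$ words in $B_0,B_1$ and locate the long jump by pigeonhole, paying a binomial sum $\sum_r\binom{n}{r}\|B_0\|^{n-r}\|B_1\|^{r-1}$ that you must then collapse by forcing $\|B_1\|\le\frac14(1-\|B\|)$ through the floor $R_0$ on the band width --- a calibration the paper does not need, since its band width is always exactly $|j-k|/n$ and smallness of the tail is used only once, to compare $\|\tilde B\|$ with $\|B\|$. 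What your route buys is elementarity (only the Schur test, no commutator identity, no Young's inequality); what it costs is the combinatorial bookkeeping and a slightly larger power $n^k$. The one step to write out carefully is the ``split by the location of the largest long jump'': as phrased it mixes entrywise absolute values with $l^2$ operator norms, and taking entrywise absolute values would destroy the hypothesis $\|B\|<1$. The clean implementation is to let $s^*$ be the \emph{first} index with $|d_{s^*}|\ge m/(2r)$, so the paths partition into $r$ classes, and the sum over class $s$ is the $(i,j)$ entry of an honest matrix product in which the first $s-1$ tail factors are truncated to jumps below $m/(2r)$, the $s$-th to jumps at least $m/(2r)$, and the rest are full $B_1$; all these truncations obey the same Schur bounds, so your estimate $r\,\|B_0\|^{n-r}\|B_1\|^{r-1}\|B\|_s(m/(2r))^{-(s-1)}$ survives and the rest of your computation goes through.
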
 
\begin{proof}
We want to compute the entries of $[B^{n}]_{jk}$. We will now define two auxiliary matrices $[\tilde{B}]_{xy} = B_{xy} \chi[|x-y|\le \frac{j-k}{n}]$ and $[\hat{B}]_{xy} = \frac{j-k}{n} B_{xy}\chi[|x-y|\ge \frac{j-k}{n}]$.

Notice that we have the following identity
\begin{gather}\label{JaffardSum}
    |j-k| [B^{n}]_{jk} = n \sum_{i=0}^{n-1} (\tilde{B})^{i} \hat{B} B^{n-i-1}
\end{gather}

We now use the following interpolation identity which appears in \cite{Jaffard}

\begin{lemma}\label{MainTermJaffard}
If $\|M\|_{l^2} \le \infty$ and $\|N\|_{l^2} \le \infty$, then we have that
\begin{gather}\label{JaffardEq}
|(M \hat{B} N)_{xy}| \le \|M\|_{l^2} \|B\|_{2+\delta} \|N\|_{l^2}
\end{gather}
\end{lemma}

\begin{proof}
Notice that the decay of $\hat{B}$ is order $1+\delta$ with coefficient $\norm{B}_{2+\delta}$. Thus we can say that $\hat{B}$ exists in $l^{q}$ for $q\ge\frac{1}{1+\delta}$.
Also see that $|(M \tilde{B} N)_{xy}| = |<M e_x, \tilde{B}N e_y>|$ where $e_x$ is the canonical basis of our matrix space.
By Young's inequality, we can say that

\begin{gather} \label{Young}
\|\hat{B}N e_y\|_{l^2} \le \|B\|_{2+\delta} \|Ne_y\|_{l^2} \le \|B\|_{2+\delta} \|N\|_{l^2}
\end{gather} 
which we can do since we have that  $r= \frac{1}{2} = q+p -1 =1 + \frac{1}{2} -1$ where we are allowed to set $q=1$.
We finally apply the Cauchy-Schwarz inequality to $|<M e_x, \tilde{B}N e_y>| \le \|M\|_{l^2} \|B\|_{2+\delta} \|N\|_{l^2}$
\end{proof}

Applying the above lemma to each term of the form $\tilde{B^{i}}\hat{B}B^{n-i-1}$, we will be able to say that
$[\tilde{B^{i}}\hat{B}B^{n-i-1}]_{ij} \le \norm{\tilde{B}}^{i} \norm{B}_{2+\delta} \norm{B}^{n-i-1}$.
Finally, we would like to relate $\norm{\tilde{B}}$ back to $\norm{B}$.
By triangle inequality, this would amount to estimating $\frac{n}{|j-i|}\norm{\hat{B}}$. Notice that in the proof of \eqref{MainTermJaffard}, we used that $\norm{\hat{B}}\le \norm{B}_{2+\delta}$.

Thus, to get that $\norm{\hat{B}}$ is sufficiently close to $\norm{B}$, we would need to assume a few conditions on $|i-j|$. Clearly, there exists a constant $C$ large enough that if we assume that $|j-i| > n \frac{2 \norm{B}_{2+\delta}}{1-\norm{B}}$, then we would know that $\norm{\tilde{B}} \le \norm{B} +\frac{1-\norm{B}}{2} = \frac{1+\norm{B}}{2}$.

Assuming this condition on $|j-i|$, we find that $[\tilde{B^{i}}\hat{B}B^{n-i-1}]_{ij} \le(\frac{1+\norm{B}}{2})^{n-1} \norm{B}_{2+\delta}$.
Thus, we find that in \eqref{JaffardSum} we have a bound of $n(\frac{1+\norm{B}}{2})^{n-1} \norm{B}_{2+\delta}$
In the case that $|j-i|$ is less than $n \frac{2 \norm{B}_{2+\delta}}{1-\norm{B}}$, we find that we have $|j_i|[B^{n}]_{ij}\le n \frac{2 \norm{B}_{2+\delta}}{1-\norm{B}}$.
A trivial bound for $|i-j|[B^{n}]_{ij}$ would be a sum of the two quantities that we have derived above.
\end{proof}

With the lemma in hand, we are able to say that 
\begin{gather}\label{finalres}
\|A\|_{1} \le   \sum_{n=1}^{\infty} \|B^n\|_{1} \le E \frac{2^{k+1}}{(1-\|B\|)^{k+1}}
\end{gather}
and we are done.

\begin{remark}

If we want to show decay of inverse of order $d>\alpha>d-\frac{1}{2}$ with coefficient of decay dependent only polynomially on $\norm{A}_{d}$ and $\norm{I-B}$, then we would need a better interpolation result as appears in \cite{Jaffard}.

The main issue is that we are no longer able to estimate quantities like $<Me_i|\tilde{B}N e_j>$ in $\eqref{MainTermJaffard}$ using the $l_2$ norms of $M$ and $N$ and instead one must use
the $l_p$ norms of $M$ and $N$ for $p$ between $1$ and $2$.

One must then interpolate the $l_p$ norm of $M$ and $N$ of with the $l_2$ norm and the appropriate $\alpha$ norm like

\begin{gather}\label{InterpolateJaffard}
\|B\|_{l^p} \le c_p \|B\|_{1}^{\frac{2}{p}-1} \|B\|^{2-\frac{2}{p}}_{l^2}
\end{gather}

The bounding of $|j-k|^{\alpha} [B^{n}]_{jk}$ then becomes a recurrence relation.

\begin{gather}\label{Recurrence}
\|B\|_{\alpha} \le C\|B\|_{\alpha}[ \|B^{n-1}\|_{\alpha}^{\frac{2}{p}-1} \|B\|^{(n-1)(2- \frac{2}{p})} + \sum_{i=1}^{n-1} (\|B^{i}\|_{\alpha} \|B^{n-i-1}\|_{\alpha})^{2 -\frac{2}{p}} \|B\|^{(n-1)(2 - \frac{2}{p})}]
\end{gather}

If one would want to prove inductively the bound that $\|B_n\|_{\alpha} \le n^{k} R^n$ , then placing this estimate inside the double product $\|B^{i}\| \|B^{n-i-1}\|$ and applying the trivial bound that $i^k (n-i-1)^k \le n^{2k}$ we would want $n^{2k(2- \frac{2}{p})} \le n^{k}$. One notices now that this is only possible if we have that $2- \frac{2}{p} \le \frac{1}{2}$ or $p\le \frac{4}{3}$ .

We could only choose $p<  \frac{4}{3}$ if we choose $\alpha < d -\frac{1}{2}$.

If one has the comfort that $\norm{I -A}$ is bounded away from 0, then one can analyze the recursion at any order $\alpha<d$ but the growth of the alpha norm in the recursion  will no longer be $\norm{I-A}$ but some parameter $r>\norm{I-A}$

\end{remark}

\printbibliography
\end{document}